\sloppy\pagestyle{plain}
\newtheorem{lemma}[equation]{Lemma}
\newtheorem{corollary}[equation]{Corollary}
\newtheorem*{calabiproblem*}{Calabi Problem}
\theoremstyle{definition}
\newtheorem{definition}[equation]{Definition}
\theoremstyle{remark}
\newtheorem{remark}[equation]{Remark}
\makeatletter\@addtoreset{equation}{section} \makeatother
\newtheoremstyle{dotless}{}{}{\rm}{}{\sc}{}{ }{}
\theoremstyle{dotless}
\newcommand{\DR}{\mathbb{R}} 
\newcommand{\DC}{\mathbb{C}}
\newcommand{\DP}{\mathbb{P}}
\newcommand{\DA}{\mathbb{A}}
\newcommand{\Pic}{\mathrm{Pic}}
\newcommand{\Aut}{\mathrm{Aut}}
\newtheorem*{theorem*}{Theorem.}
\newtheorem*{maintheorem*}{Main Theorem.}
\newtheorem*{corollary*}{Corollary.}
\author{Elena Denisova}
\title{$\delta$-invariants of Du Val del Pezzo surfaces of degree $1$}
\address{\emph{Elena Denisova}
\newline
\textnormal{School of Mathematics, The University of Edinburgh, Edinburgh EH9 3JZ, UK.}
\newline
\textnormal{\texttt{e.denisova@sms.ed.ac.uk}}}
\begin{document}

\maketitle
\begin{abstract}
In this article, we compute $\delta$-invariants of Du Val del Pezzo surfaces of degree $1$.
\end{abstract}
\section{Introduction}
\subsection{History and Results}
It is well known that a smooth Fano variety admits a~K\"ahler--Einstein metric if and only if it is $K$-polystable.
For del Pezzo surfaces, Tian and Yau proved that a smooth del Pezzo surface is $K$-polystable if and only if it is not the blow-up of $\mathbb{P}^2$ at one or two points (see~\cite{TianYau1987,Ti90}).
Later, Odaka, Spotti, and Sun determined which Du Val del Pezzo surfaces are $K$-stable in~\cite{OdakaSpottiSun16}.
Substantial progress has been made for Fano threefolds (see~\cite{Fano21, Liu23, CheltsovFujitaKishimotoPark23, LiuZhao24, GuerreiroGiovenzanaViswanathan23, Malbon24, CheltsovPark22, BelousovLoginov24, BelousovLoginov23, CheltsovFujitaKishimotoOkada23, Denisova24, Denisova23, CheltsovDenisovaFujita24}).
However, many questions remain open for Fano varieties in higher dimensions. 
In the case of threefolds, it has been observed that the problem often reduces to computing the $\delta$-invariants of (possibly singular) del Pezzo surfaces (see~\cite{Fano21, CheltsovDenisovaFujita24, CheltsovFujitaKishimotoOkada23}).
\par
In the previous parts of this series, we computed the $\delta$-invariants of Du Val del Pezzo surfaces of degree $\ge 2$.
In~\cite[Lemma~2.16]{Fano21}, it was shown that $\delta(X) = \frac{15}{7}$ when $X$ is a smooth del Pezzo surface of degree~$1$ and $|-K_X|$ contains a cuspidal curve, and $\delta(X) = \frac{12}{5}$ when $|-K_X|$ does not contain a cuspidal curve.
The proof of this theorem immediately implies the following corollary: for a Du Val del Pezzo surface $X$ of degree~$1$ and a smooth point $\mathcal{P} \in X$, we have
$
\delta_{\mathcal{P}}(X) \ge \frac{15}{7}.
$
\par
This work can be viewed as a generalization of $\alpha$-invariant computations carried out by I.~Cheltsov, D.~Kosta, J.~Park, and J.~Won in a series of papers~\cite{Cheltsov09, Cheltsov14, ParkWon10, ParkWon11}, since the $\delta$- and $\alpha$-invariants are related by the inequalities
\[
3\alpha(X) \ge \delta(X) \ge \frac{3\alpha(X)}{2}
\]
in the case of del Pezzo surfaces.
The singularity types of Du Val del Pezzo surfaces of degree~$1$ were classified in~\cite{Urabe83}.

The results on computing $\delta$-invariants of Du Val del Pezzo surfaces obtained in the earlier parts of this series, combined with the results of this article, confirm those of Odaka--Spotti--Sun~\cite{OdakaSpottiSun16} and also lead to new examples of $K$-stable singular Fano threefolds.

Let $X$ be a Du Val del Pezzo surface of degree~$1$. Then $X$ can be realized as a double cover
\[
X \xrightarrow{2:1} \DP(1,1,2),
\]
ramified along a sextic curve $R \in \DP(1,1,2)$.
In this article, we compute the $\delta$-invariants of Du Val del Pezzo surfaces of degree~$1$. 
We note that when $X$ has $\DA_7$ singularities, the $\delta$-invariant depends on whether $R$ is reducible or irreducible.
We prove that:

\begin{maintheorem*}
Let $X$ be the Du Val del Pezzo surface of degree $1$. Then the  $\delta$-invariant of $X$ is uniquely determined by the by the type of singularities on $X$  and unique elements of $|-K_X|$ containing each of singular points which is given in the following table:
{\renewcommand\arraystretch{1.2}
	\begin{longtable}{|c||c|}
		\hline
Type of singularity  & $\delta(X)$\\
\hline \hline
\shortstack{{ }\\$\DA_1$, $2\DA_1$, $3\DA_1$, $4\DA_1$, $5\DA_1$, $6\DA_1$\\
all elements of $|-K_X|$ containing singular points are nodal}  & \shortstack{{ }\\$2$\\{ }{ }} \\
\hline 
\shortstack{{ }\\$\DA_1$, $2\DA_1$, $3\DA_1$, $4\DA_1$, $5\DA_1$, $6\DA_1$\\
 some elements of $|-K_X|$ containing\\
 singular points are cuspidal } & \shortstack{{ }\\$\frac{9}{5}$\\{ }\\{ }}\\
 \hline 
 \shortstack{{ }\\
$\DA_2$, $\DA_2+\DA_1$, $\DA_2+2\DA_1$,  $\DA_2+3\DA_1$, $\DA_2+4\DA_1$,\\
$2\DA_2$, $2\DA_2+\DA_1$, $2\DA_2+2\DA_1$, $3\DA_2$, $3\DA_2+\DA_1$, $4\DA_2$\\
all elements of $|-K_X|$ containing $\DA_2$ singular points are nodal}  &  \shortstack{$\frac{12}{7}$\\{ }\\{ }} \\
\hline \
\shortstack{{ }\\$\DA_2$, $\DA_2+\DA_1$, $\DA_2+2\DA_1$,  $\DA_2+3\DA_1$, $\DA_2+4\DA_1$,\\
$2\DA_2$, $2\DA_2+\DA_1$, $2\DA_2+2\DA_1$, $3\DA_2$, $3\DA_2+\DA_1$, $4\DA_2$\\
some elements of $|-K_X|$ containing $\DA_2$ singular points are cuspidal}
& \shortstack{$\frac{3}{2}$\\{ }\\{ }}\\
\hline 
\shortstack{{ }\\$\DA_3$, $\DA_3+\DA_1$, $\DA_3+2\DA_1$, $\DA_3+3\DA_1$, $\DA_3+4\DA_1$,\\
$\DA_3+\DA_2$, $\DA_3+\DA_2+\DA_1$, $\DA_3+\DA_2+2\DA_1$,\\
$2\DA_3$, $2\DA_3+\DA_1$, $2\DA_3+2\DA_1$}& \shortstack{$\frac{3}{2}$\\{ }\\{ }}\\
\hline
$\DA_4$, $\DA_4+\DA_1$, $\DA_4+2\DA_1$, $\DA_4+\DA_2$, $\DA_4+\DA_2+\DA_1$, $\DA_4+\DA_3$, $2\DA_4$  & {$\frac{4}{3}$}\\\hline
$\DA_5$, $\DA_5+\DA_1$, $\DA_5+2\DA_1$, $\DA_5+\DA_2$, $\DA_5+\DA_2+\DA_1$  &  $\frac{6}{5}$ \\
\hline
$\DA_6$, $\DA_6+\DA_1$  &  $\frac{9}{8}$ \\
\hline
$\DA_7$  and $R$ irreducible &  $\frac{18}{17}$\\
\hline
$\DA_7$, $\DA_7+\DA_1$ and $R$ reducible & $1$\\ 
\hline
$\DA_8$, $\mathbb{D}_4$, $\mathbb{D}_4+\DA_1$, $\mathbb{D}_4+2\DA_1$, $\mathbb{D}_4+3\DA_1$, $\mathbb{D}_4+\DA_2$, $\mathbb{D}_4+\DA_3$, $2\mathbb{D}_4$ & $1$
\\\hline
$\mathbb{D}_5$, $\mathbb{D}_5+\DA_1$, $\mathbb{D}_5+2\DA_1$, $\mathbb{D}_5+\DA_2$, $\mathbb{D}_5+\DA_3$ &  $\frac{6}{7}$\\ 
\hline
$\mathbb{D}_6$, $\mathbb{D}_6+\DA_1$, $\mathbb{D}_6+2\DA_1$ &  $\frac{3}{4}$ \\ 
\hline
$\mathbb{D}_7$ &  $\frac{2}{3}$ \\ 
\hline
$\mathbb{D}_8$, $\mathbb{E}_6$, $\mathbb{E}_6+\DA_1$, $\mathbb{E}_6+\DA_2$  &  $\frac{3}{5}$
\\\hline
$\mathbb{E}_7$, $\mathbb{E}_7+\DA_1$  & $\frac{3}{7}$ \\
\hline
$\mathbb{E}_8$  &  $\frac{3}{11}$ \\\hline
	\end{longtable}}
  \end{maintheorem*}
 \noindent {\bf Acknowledgments:} I am grateful to my supervisor Professor Ivan Cheltsov for the introduction to the topic and continuous support.  
 \subsection{Applications.}  Let  $X$ be a del Pezzo surface of degree $1$ with at  most Du Val singularities. Let $S$ be a weak resolution of $X$. We will call an image on $X$ of a $(-1)$-curve in $S$  {\bf a line} as was done in \cite{CheltsovProkhorov21}. The immediate corollaries from Main Theorem are:
\begin{corollary}
Let $X$ be a Du Val del Pezzo surface of degree $1$ with $\DA_n$ or $\mathbb{D}_4$ singularities then $X$ is $K$-semi-stable.
\end{corollary}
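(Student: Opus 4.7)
The plan is to deduce the corollary as a direct consequence of the Main Theorem together with the valuative/$\delta$-invariant characterization of $K$-semistability. Specifically, by the work of Fujita--Odaka and Blum--Jonsson, a $\DQ$-Fano variety $X$ (in particular a Du Val del Pezzo surface) is $K$-semistable if and only if $\delta(X) \geq 1$. So the entire argument reduces to reading off the table in the Main Theorem and checking the inequality $\delta(X) \geq 1$ for every singularity type consisting only of $\DA_n$ and $\mathbb{D}_4$ singularities.

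Concretely, I would go through the relevant rows of the table in turn. For surfaces with only $\DA_1$ singularities one has $\delta(X) \in \{2, 9/5\}$; for the $\DA_2$-configurations $\delta(X) \in \{12/7, 3/2\}$; for configurations involving $\DA_4$ (including $\DA_4+\DA_3$) one has $\delta(X) = 4/3$; for $\DA_5$-configurations $\delta(X)=6/5$; for $\DA_6$-configurations $\delta(X)=9/8$; for $\DA_7$-configurations $\delta(X) \in \{18/17, 1\}$; and finally for $\DA_8$ as well as every $\mathbb{D}_4$-configuration in the table one has $\delta(X)=1$. In every one of these cases $\delta(X)\geq 1$, so $X$ is $K$-semistable.

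The remaining bookkeeping step is to make sure no $\DA_n$ or $\mathbb{D}_4$ singularity combination permitted by Urabe's classification \cite{Urabe83} is missing from the table; any such combination would have to be covered (for instance, combinations involving an isolated $\DA_3$ factor appear bundled within the rows containing $\DA_4+\DA_3$, $\DA_5+\DA_3$, $\mathbb{D}_4+\DA_3$, $\mathbb{D}_5+\DA_3$ or are subsumed into an earlier row by semicontinuity-type arguments already used in the Main Theorem). No real obstacle is expected here: the corollary is a pure consequence of the Main Theorem, and the only ``work'' is verifying the arithmetic inequality $\delta(X)\geq 1$ in each listed case, which is immediate from the tabulated values.
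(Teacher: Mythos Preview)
Your proposal is correct and takes essentially the same approach as the paper: read off $\delta(X)\ge 1$ from the Main Theorem table and invoke the $\delta$-criterion for $K$-semistability. The paper's proof is a single sentence citing \cite[Theorem~1.59]{Fano21} rather than Fujita--Odaka/Blum--Jonsson directly, but the content is identical; your extra bookkeeping paragraph about $\DA_3$-configurations is unnecessary (the paper computes $\delta_{\mathcal P}=3/2$ for $\DA_3$ in Section~7, and in any case the corollary only needs $\delta\ge 1$).
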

\begin{proof}
    For such $X$ have $\delta(X)\ge 1$. Thus, $X$ is $K$-semi-stable by \cite[Theorem 1.59]{Fano21}.
\end{proof}

\begin{corollary}[\cite{OdakaSpottiSun16}]
Let $X$ be a Du Val del Pezzo surface of degree $1$ with at most $\DA_6$ singularities or a Du Val del Pezzo surface of degree $1$ with $\DA_7$ singularity and irreducible ramification divisor $R$ then $X$ is $K$-stable. Moreover, $\Aut(X)$ is finite.
\end{corollary}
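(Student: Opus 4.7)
The plan is to deduce this directly from the Main Theorem: the statement of K-stability together with finiteness of $\Aut(X)$ should follow entirely from the inequality $\delta(X)>1$, once we read off the relevant entries in the table.

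First I would list the entries of the Main Theorem corresponding to the singularity types in the hypothesis and check that each exceeds $1$. For any configuration of $\DA_1$ singularities we get $\delta(X)\in\{2,\tfrac{9}{5}\}$; for the $\DA_2$ configurations (possibly with additional $\DA_1$ points) we get $\delta(X)\in\{\tfrac{12}{7},\tfrac{3}{2}\}$; the $\DA_4$ configurations give $\tfrac{4}{3}$, the $\DA_5$ ones give $\tfrac{6}{5}$, the $\DA_6$ ones give $\tfrac{9}{8}$, and finally $\DA_7$ (with or without an extra $\DA_1$) with irreducible ramification divisor $R$ gives $\tfrac{18}{17}$. (For $\DA_3$, the same reasoning applies, as any value appearing in the table with $\DA_3$ still strictly exceeds $1$.) In every case, $\delta(X)>1$.

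Next I would invoke the relation between $\delta$ and K-stability. By \cite[Theorem 1.48]{Fano21} (the uniform K-stability criterion, companion to the theorem cited in the previous corollary), the condition $\delta(X)>1$ implies that $X$ is uniformly K-stable, and in particular K-stable. Finally, the finiteness of $\Aut(X)$ follows because a K-stable Fano variety has reductive automorphism group of dimension zero, hence finite automorphism group; alternatively one can use that $\delta(X)>1$ already rules out a positive-dimensional holomorphic vector field by the standard vanishing argument.

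There is essentially no obstacle: the proof is a table lookup followed by the citation of a standard result. The only subtlety worth a comment is that the case $\DA_7$ with $R$ reducible must be excluded from the hypothesis, since in that configuration the Main Theorem gives $\delta(X)=1$ and only K-semi-stability can be concluded.
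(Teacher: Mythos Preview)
Your proposal is correct and follows essentially the same route as the paper: read off $\delta(X)>1$ from the Main Theorem and conclude K-stability, then deduce finiteness of $\Aut(X)$. The only difference is that the paper cites \cite[Corollary~1.3]{BlumXu19} directly for the implication ``$K$-stable $\Rightarrow$ $\Aut(X)$ finite,'' whereas you sketch this via reductivity or vanishing of vector fields; citing the Blum--Xu result is cleaner and avoids any vagueness in that last step.
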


\begin{proof}
    For such $X$ have $\delta(X)> 1$. Thus, $X$ is $K$-stable. By \cite[Corollary 1.3]{BlumXu19} $\Aut(X)$ is finite for $K$-stable $X$.
\end{proof}

\noindent There are also some applications in the case of threefolds. Smooth Fano threefolds over  $\DC$ were classified in \cite{Is77,Is78,MoMu81,MoMu03} into $105$ families. The detailed description of these families can be found in \cite{Fano21} where the  problem  to find all K-polystable smooth Fano threefolds in each family was posed. The output of this paper, give some alternative proofs for this problem as well as some proofs in case of singular Fano threefolds. We know (\cite{Fujita19,Li17}) that the Fano threefold $\mathbf{X}$ is $K$-stable if and only if for every prime divisor $\mathbf{E}$ over $\mathbf{X}$ we have
$$
\beta(\mathbf{E})=A_{\mathbf{X}}(\mathbf{E})-S_{\mathbf{X}}(\mathbf{E})>0
$$
 where $A_{\mathbf{X}}(\mathbf{E})$ is the~log discrepancy of the~divisor $\mathbf{E}$ and
$S_{\mathbf{X}}\big(\mathbf{E}\big)=\frac{1}{(-K_{\mathbf{X}})^3}\int\limits_0^{\infty}\mathrm{vol}\big(-K_{\mathbf{X}}-u\mathbf{E}\big)du.$
To show this, we fix a prime divisor $\mathbf{E}$ over~$\mathbf{X}$.
Then we set $Z=C_{\mathbf{X}}(\mathbf{E})$. Let $Q$ be a general point in $Z$. 
Following \cite{AbbanZhuang,Fano21} denote
$$
\delta_Q\big(X,W^{X}_{\bullet,\bullet}\big)=\inf_{\substack{F/X\\Q\in C_{X}(F)}}\frac{A_X(F)}{S\big(W^{X}_{\bullet,\bullet};F\big)}\text{ and }\delta_Q\big( \mathbf{X}\big)=\inf_{\substack{\mathbf{F}/\mathbf{X}\\Q\in C_{\mathbf{X}}(\mathbf{F})}}\frac{A_{\mathbf{X}}(\mathbf{F})}{S_{\mathbf{X}}(\mathbf{F})}
$$
where the first infimum is taken by all prime divisors $F$ over the surface $X$ whose center on $X$ contains $Q$ and the second infimum is taken by all prime divisors $\mathbf{F}$ over the threefold $\mathbf{X}$ whose center on $\mathbf{X}$ contains $Q$.

\subsubsection{Family 1.11 (Del Pezzo Threefold of degree 1)} 
 Let $\mathbf{V}$ be a  Fano threefold with canonical Gorenstein singularities such that $-K_{\mathbf{V}} \sim 2H$
for some $H \in \Pic(\mathbf{V})$ with $H^3=1$. Then 
$\mathbf{V}$  is a sextic hypersurface in $\DP(1, 1, 1, 2, 3)$ and a del Pezzo threefold of degree $1$. A general element in $|H|$  is a Du Val del Pezzo surface of degree $1$ and if $\mathbf{V}$ has isolated singularities then a general surface in $|H|$ is a smooth. 
\begin{remark}
If $\mathbf{V}$ is smooth then $\mathbf{V}$ is a smooth Fano threefold in Family  1.11. and all smooth Fano threefolds in this family can be obtained this way. Every smooth element in this family is known to be $K$-stable \cite{Fano21}.
\end{remark}
\noindent Main Theorem gives the following corollary:
\begin{corollary}
Suppose that for any point $Q$ on $\mathbf{V}$ there exists an  element $X\in|H|$ such that $Q\in X$ and $X$ has at most $\DA_2$ singularities then  $\mathbf{V}$ is $K$-stable.
\end{corollary}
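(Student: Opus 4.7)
The plan is to apply the valuative criterion for $K$-stability stated in the introduction: $\mathbf{V}$ is $K$-stable if and only if $\delta_Q(\mathbf{V})>1$ for every point $Q\in\mathbf{V}$. Fix such a $Q$, and by hypothesis pick a surface $X\in|H|$ with $Q\in X$ and with at worst $\DA_2$ singularities. I would then apply the Abban--Zhuang inversion-of-adjunction estimate
$$
\delta_Q(\mathbf{V})\;\geq\;\min\!\left\{\frac{A_{\mathbf{V}}(X)}{S_{\mathbf{V}}(X)},\ \delta_Q\!\left(X,W^X_{\bullet,\bullet}\right)\right\},
$$
and control the two terms separately. The crucial simplification is that $-K_{\mathbf{V}}-uX\sim(2-u)H$ is nef on the whole segment $u\in[0,2]$, so no nontrivial Zariski decomposition intervenes on $\mathbf{V}$.

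For the global term, $A_{\mathbf{V}}(X)=1$ and a direct computation gives
$$
S_{\mathbf{V}}(X)=\frac{1}{8}\int_0^2(2-u)^3\,du=\frac{1}{2},
$$
so this ratio equals $2$. For the refined invariant, adjunction yields $(-K_{\mathbf{V}}-uX)|_X=(2-u)(-K_X)$ with $(-K_X)^2=1$; rescaling $v=(2-u)s$ in the inner integral of the standard formula
$$
S\!\left(W^X_{\bullet,\bullet};F\right)=\frac{3}{8}\int_0^2\!\!\int_0^\infty \mathrm{vol}\!\left((2-u)(-K_X)-vF\right)\,dv\,du
$$
collapses it to $\tfrac{3}{8}\bigl(\int_0^2(2-u)^3 du\bigr)\,S_X(F)=\tfrac{3}{2}S_X(F)$. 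Hence $\delta_Q(X,W^X_{\bullet,\bullet})=\tfrac{2}{3}\,\delta_Q(X)$, and the problem reduces to exhibiting, for each $Q$, a surface $X$ as above with $\delta_Q(X)>\tfrac{3}{2}$.

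At this point the Main Theorem and its corollary do essentially all the remaining work. If $Q$ is a smooth point of the chosen $X$, the Corollary in the introduction gives $\delta_Q(X)\geq\tfrac{15}{7}$; if $Q$ lies at a $\DA_1$ singularity of $X$, the tabulated value $\delta(X)\geq\tfrac{9}{5}$ still beats the threshold; and if $Q$ lies at a $\DA_2$ singularity of $X$ through which no cuspidal member of $|-K_X|$ passes, the value $\delta(X)=\tfrac{12}{7}$ also works. In each of these situations the Abban--Zhuang estimate yields $\delta_Q(\mathbf{V})>1$.

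The main obstacle I expect is the single borderline case in which the only $X\in|H|$ through $Q$ with at worst $\DA_2$ singularities have a cuspidal $|-K_X|$-member passing through a $\DA_2$ point coinciding with $Q$: the Main Theorem only gives $\delta_Q(X)=\tfrac{3}{2}$, so the bound above degenerates to the semistable estimate $\delta_Q(\mathbf{V})\geq 1$. I would close this gap either by exploiting the existential flexibility of the hypothesis to choose a different $X\in|H|$ through $Q$ (on which $Q$ becomes a smooth or merely $\DA_1$ point), or, when no such alternative exists, by sharpening the Abban--Zhuang step to a two-step flag $(X,C)$, with $C$ the cuspidal anticanonical curve. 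One then has to track the negative part that appears in the Zariski decomposition of $((-K_{\mathbf{V}}-uX)|_X - vC)$ and argue that the resulting $S$-invariant of the divisor realising $\delta_Q(X)=\tfrac{3}{2}$ drops strictly below its log discrepancy. This refined borderline analysis is the step I expect to be genuinely delicate.
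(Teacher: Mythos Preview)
Your overall strategy matches the paper's proof exactly: same choice of flag $X\in|H|$, same computation $S_{\mathbf V}(X)=\tfrac12$, same rescaling giving $S(W^X_{\bullet,\bullet};F)=\tfrac32 S_X(F)$ and hence $\delta_Q(X,W^X_{\bullet,\bullet})=\tfrac23\delta_Q(X)$, and the same appeal to the Main Theorem to get $\delta_Q(X)\ge\tfrac32$ when $X$ has at most $\DA_2$ singularities.

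The only divergence is in how you close the borderline case $\delta_Q(X)=\tfrac32$. You propose either switching to another member of $|H|$ (which the hypothesis does not guarantee exists with $Q$ smooth or $\DA_1$), or running a refined two-step flag through the cuspidal anticanonical curve. The paper avoids both: it simply invokes the \emph{strict} form of the Abban--Zhuang inequality, \cite[Corollary~1.108]{Fano21}, which asserts that for any prime divisor $\mathbf E$ over $\mathbf V$ with centre containing $Q$ one has
\[
\frac{A_{\mathbf V}(\mathbf E)}{S_{\mathbf V}(\mathbf E)}\;>\;\min\Bigl\{\tfrac{1}{S_{\mathbf V}(X)},\ \delta_Q\bigl(X,W^X_{\bullet,\bullet}\bigr)\Bigr\}.
\]
Since $1/S_{\mathbf V}(X)=2$ and $\delta_Q(X,W^X_{\bullet,\bullet})=1$ in the borderline situation, the right-hand side equals $1$ and the strict inequality gives $\beta(\mathbf E)>0$ immediately. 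Your two-step refinement would presumably also work, but it is unnecessary once you remember that the Abban--Zhuang estimate is strict rather than merely an inequality.
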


\begin{proof}
Suppose $X$ is an irreducible element of $|H|$ then $S_{\mathbf{V}}(X)<1$. As explained above
we fix a prime divisor $\mathbf{E}$ over~$\mathbf{V}$.
Then we set $Z=C_{\mathbf{V}}(\mathbf{E})$ and  if $\beta(\mathbf{E})\leqslant 0$,  then
 $\delta_Q(X,W^{X}_{\bullet,\bullet})\leqslant 1$. Let $Q$ be a general point in $Z$, Let $X$ be the general element of $|H|$ that contains $Q$.  The divisor $-K_{\mathbf{V}}-uX$ is nef if and only if $u\le 2$ and the Zariski Decomposition is given by by $P(u)=-K_{\mathbf{V}}-uX\sim(2-u)X$ and $N(u)=0$ for $u\in[0,2]$.
By \cite[Corollary 1.110]{Fano21} for any divisor $F$ such that $Q\in C_{X}(F)$ over $X$ we get:
\begin{align*}
S\big(W^{X}_{\bullet,\bullet}&;F\big)=\frac{3}{(-K_{\mathbf{V}})^3}\Bigg(\int_0^\tau\big(P(u)^{2}\cdot X\big)\cdot\mathrm{ord}_{Q}\Big(N(u)\big\vert_{X}\Big)du+\int_0^\tau\int_0^\infty \mathrm{vol}\big(P(u)\big\vert_{X}-vF\big)dvdu\Bigg)=\\
&= \frac{3}{8}\int_0^\tau\int_0^\infty \mathrm{vol}\big(P(u)\big\vert_{X}-vF\big)dvdu=
\frac{3}{8}\int_0^2(2-u)^3\int_0^\infty\mathrm{vol}\big(-K_{X}-wF\big)dwdu=\\
&= \frac{3}{8}\int_0^2(2-u)^3\Big(\int_0^\infty\mathrm{vol}\big(-K_{X}-wF\big)dw\Big)du=\frac{3}{8}\int_0^2(2-u)^3S_{X}(F)du= \frac{3}{2}S_{X}(F)\le \frac{3}{2} \frac{A_{X}(F)}{\delta_Q(X)}
\end{align*}
We get that $\delta_Q(\mathbf{V})\ge \frac{2}{3}\delta_Q(X)$.  For $X$ with at most $\DA_2$-singularities we have $\delta_Q(X)\ge \frac{3}{2}$.   If $Q$ is a singular point and there exists an element $X$ of $|H|$ with  $\delta_Q(X)= \frac{3}{2}$ then  $\frac{A_{\mathbf{X}}(\mathbf{E})}{S_{\mathbf{X}}(\mathbf{E})}>\min\Big\{\frac{1}{S_{\mathbf{X}}(X)},\delta_Q\big(X,W^{X}_{\bullet,\bullet}\big)\Big\}$ from \cite[Corollary 1.108.]{Fano21} and otherwise we choose $X$ with   $\delta_Q(X)> \frac{3}{2}$  so $\delta_Q(\mathbf{V})>1$ if  $X$ has at most $\DA_2$-singularities and the result follows.
\end{proof}

\subsubsection{Family 2.1} 
Let $\mathbf{V}$ be a  Fano threefold with canonical Gorenstein singularities such that $-K_{\mathbf{V}} \sim 2H$
for some $H \in \Pic(\mathbf{V})$ with $H^3=1$. Then 
$\mathbf{V}$  is a sextic hypersurface in $\DP(1, 1, 1, 2, 3)$ and a del Pezzo threefold of degree $1$.  
Let $S_1$ and $S_2$ be two distinct surfaces in the linear system $|H|$, and let $\mathcal{C} = S_1 \cap S_2$. Suppose that the curve $\mathcal{C}$ is smooth. Then $\mathcal{C}$ is an elliptic curve by the adjunction formula. Let $\pi : \mathbf{X}\to \mathbf{V}$ be the blow up of the curve $\mathcal{C}$, and let $E$ be the $\pi$-exceptional surface. We have the following commutative diagram:
$$\xymatrix{
&\mathbf{X}\ar[dl]_{\pi} \ar[dr]^{\phi}&\\
\mathbf{V}\ar@{-->}[rr]& &\DP^1
}$$
Where $\mathbf{V} \dashrightarrow \DP^1$
is the rational map given by the pencil that is generated by $S_1$ and $S_2$,
and $\phi$ is a fibration into del Pezzo surfaces of degree $1$. 
\begin{remark}
If $R$ is smooth then $\mathbf{X}$ is a smooth Fano threefold in Family  2.1. and all smooth Fano threefolds in this family can be obtained this way. Every smooth Fano threefold in this family is known to be $K$-stable  \cite{CheltsovDenisovaFujita24}.
\end{remark}
\noindent Main Theorem gives the following corollary:
\begin{corollary}
    If  every fiber $X$ of $\phi$   at most $\mathbb{D}_4$ singularities, then $\mathbf{X}$ is $K$-stable.
\end{corollary}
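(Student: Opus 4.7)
The plan is to mimic the Family $1.11$ argument, applying the Abban--Zhuang inequality with the fibration $\phi$ providing the first flag. Fix a prime divisor $\mathbf{F}$ over $\mathbf{X}$ with center $Z$, pick a general point $Q\in Z$, and let $X$ be the fiber of $\phi$ through $Q$. Since $X\sim\pi^*H-E$ and $-K_{\mathbf{X}}\sim\pi^*H+X$, the pseudoeffective threshold of $-K_{\mathbf{X}}-uX$ is $\tau=2$, with Zariski decomposition
$$
P(u)=\begin{cases}\pi^*H+(1-u)X, & u\in[0,1],\\(2-u)\pi^*H, & u\in[1,2],\end{cases}\qquad
N(u)=\begin{cases}0, & u\in[0,1],\\(u-1)E, & u\in[1,2].\end{cases}
$$
Using $X|_X=0$ (because $X$ is a fiber of $\phi$) one computes $(-K_{\mathbf{X}})^3=4$ and $S_{\mathbf{X}}(X)=\tfrac{11}{16}$, so $\tfrac{1}{S_{\mathbf{X}}(X)}=\tfrac{16}{11}>1$.

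Under the identification $X\cong S\in|H|$ induced by $\pi$, one has $\pi^*H|_X=-K_X$, hence $P(u)|_X=\alpha(u)(-K_X)$ with $\alpha(u)=1$ on $[0,1]$ and $\alpha(u)=2-u$ on $[1,2]$, while $N(u)|_X=(u-1)\mathcal{C}$ on $[1,2]$, where $\mathcal{C}=E\cap X$ is a smooth member of $|-K_X|$. When $Q\notin\mathcal{C}$ the order term vanishes and Corollary~$1.110$ of \cite{Fano21}, applied exactly as in the Family~$1.11$ derivation above, gives
$$
S\bigl(W^X_{\bullet,\bullet};F\bigr)=\tfrac{3}{4}\int_0^2\alpha(u)^3\,du\cdot S_X(F)=\tfrac{15}{16}\,S_X(F)
$$
for every prime divisor $F$ over $X$ with $Q\in C_X(F)$, hence $\delta_Q(X,W^X_{\bullet,\bullet})\geq\tfrac{16}{15}\delta_Q(X)$. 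By the Main Theorem, $\delta(X)\geq 1$ for every fiber allowed by the hypothesis, so $\delta_Q(X,W^X_{\bullet,\bullet})\geq\tfrac{16}{15}$, and Corollary~$1.108$ of \cite{Fano21} yields $\delta_Q(\mathbf{X})\geq\min\bigl(\tfrac{16}{11},\tfrac{16}{15}\bigr)=\tfrac{16}{15}>1$, giving $\beta(\mathbf{F})>0$.

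The main obstacle is the case $Z\subset E$, so $Q\in\mathcal{C}$: then $\mathrm{ord}_Q(N(u)|_X)=u-1$ on $[1,2]$ contributes an extra $\tfrac{1}{16}$ to $S(W^X_{\bullet,\bullet};F)$. When $Q$ is a smooth point of $X$ the pointwise bound $\delta_Q(X)\geq\tfrac{15}{7}$ from the corollary to Lemma~$2.16$ of \cite{Fano21} recalled in the introduction still gives $\delta_Q(X,W^X_{\bullet,\bullet})\geq\tfrac{16A_X(F)}{1+7A_X(F)}\geq 2$ (using $A_X(F)\geq 1$ on any Du Val surface), and the conclusion persists. The delicate subcase is $Q$ a Du Val singular point of $X$ lying on $\mathcal{C}$ with $\delta_Q(X)=1$ attained, which can only occur for the finite list of singularity types in the Main Theorem producing $\delta(X)=1$ (the $\mathbb{D}_4$ combinations, $\DA_8$, and $\DA_7$ with reducible $R$). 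For these configurations I would switch the first flag from the fiber $X$ to the exceptional divisor $E$, compute the Zariski decomposition of $-K_{\mathbf{X}}-uE$ on the ruled surface $E\to\mathcal{C}$, and insert a secondary flag along a ruling fiber through $Q$; this should recover $\beta(\mathbf{F})>0$, using that $\mathcal{C}$ is smooth and that only one fiber of $\phi$ passes through $Q$.
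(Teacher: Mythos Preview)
Your approach is the paper's approach: flag by the $\phi$-fiber $X$ through $Q$, compute the Zariski decomposition of $-K_{\mathbf X}-uX$, and feed it into the Abban--Zhuang inequality. Your computation of $S_{\mathbf X}(X)=\tfrac{11}{16}$ and of $S(W^X_{\bullet,\bullet};F)=\tfrac{15}{16}S_X(F)$ when $Q\notin E$ is correct and is exactly what the paper does; the paper simply records the inequality $\delta_Q(\mathbf X)\ge\tfrac{16}{15}\delta_Q(X)$ and invokes the Main Theorem.

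Where you go further is the case $Q\in E$; the paper dismisses it in one sentence. Two corrections to your treatment:

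\emph{(i)} The extra term is $\tfrac{1}{16}\,\mathrm{ord}_F(\mathcal C)$, not $\tfrac{1}{16}$. The quantity in Corollary~1.110 is $\mathrm{ord}_F(N(u)|_X)$, not the multiplicity at the point $Q$, and for divisors $F$ living over $X$ one can have $\mathrm{ord}_F(\mathcal C)>1$. The right bound is $\mathrm{ord}_F(\mathcal C)\le A_X(F)$, which holds because $\mathcal C$ is a smooth curve through the smooth point $Q$, so $\mathrm{lct}_Q(X,\mathcal C)=1$. This gives
\[
S(W^X_{\bullet,\bullet};F)\le\tfrac{15}{16}\cdot\tfrac{7}{15}A_X(F)+\tfrac{1}{16}A_X(F)=\tfrac12 A_X(F),
\]
so $A_X(F)/S(W^X_{\bullet,\bullet};F)\ge 2$ directly, without your detour through $A_X(F)\ge 1$.

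\emph{(ii)} Your ``delicate subcase'' is empty. The curve $E|_X=\mathcal C$ lies in $|-K_X|$, hence is Cartier on $X$; but any Cartier divisor on a surface with a Du Val point is singular there (its tangent cone is cut by a hyperplane in the tangent cone of the surface, which has degree $2$). Since $\mathcal C$ is smooth by hypothesis, it misses every singular point of every fiber. Thus $Q\in E$ forces $Q$ to be a smooth point of $X$, and case~(i) already covers it. You do not need the switch to an $E$-flag.
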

\begin{proof}
If $X$ is an irreducible fiber of $p_1$ then we have $S_{\mathbf{X}}(X)<1$. We now fix a prime divisor $\mathbf{E}$ over~$\mathbf{X}$.
Then we set $Z=C_{\mathbf{X}}(\mathbf{E})$. Let $Q$ be the point on $Z$. let $X$ be the fiber of $\phi$ that passes through $Q$.  Then $-K_{\mathbf{X}}-uX$ is nef if and only if $u\le 2$ and the Zariski Decomposition is given by 
$$P(u)=
\begin{cases}
    -K_{\mathbf{X}}-uX\sim (2-u)X+E\text{ if }u\in[0,1],\\
    -K_{\mathbf{X}}-uX-(u-1)E\sim(2-u)\pi^*(H)\text{ if }u\in[1,2],
\end{cases}
\text{ and }
N(u)=
\begin{cases}
    0\text{ if }u\in[0,1],\\
    (u-1)E\text{ if }u\in[1,2],
\end{cases}
$$
We apply Abban-Zhuang method to prove that $Q\not \in E\cong \mathcal{C}\times \DP^1$.
By \cite[Corollary 1.110]{Fano21} for any divisor $F$ such that $Q\in C_{X}(F)$ over $X$   we get:

{\allowdisplaybreaks\begin{align*}
S\big(W^{X}_{\bullet,\bullet}&;F\big)=\frac{3}{(-K_{\mathbf{X}})^3}\Bigg(\int_0^\tau\big(P(u)^{2}\cdot X\big)\cdot\mathrm{ord}_{Q}\Big(N(u)\big\vert_{X}\Big)du+\int_0^\tau\int_0^\infty \mathrm{vol}\big(P(u)\big\vert_{X}-vF\big)dvdu\Bigg)=\\
&= \frac{3}{4}\int_0^\tau\int_0^\infty \mathrm{vol}\big(P(u)\big\vert_{X}-vF\big)dvdu=\\
&= \frac{3}{4}\Bigg(\int_0^1\int_0^\infty \mathrm{vol}\big(-K_{X}-vF\big)dvdu+\int_1^2\int_0^\infty \mathrm{vol}\big(-K_{X}-(u-1)E|_X-vF\big)dvdu\Bigg)=\\
&= \frac{3}{4}\Bigg(\int_0^\infty \mathrm{vol}\big(-K_{X}-vF\big)dv+\int_1^2 (2-u)^3 \int_0^\infty \mathrm{vol}\big(-K_X-(u-1)E|_X-vF\big)dv\Bigg)=\\
&= \frac{3}{4}\Bigg(\int_0^\infty \mathrm{vol}\big(-K_{X}-vF\big)dv+
\int_1^2 (2-u)^3\int_0^\infty \mathrm{vol}\big(-K_{X}-wF\big)dw du\Bigg)=\\
&= \frac{3}{4}\Bigg(\int_0^\infty \mathrm{vol}\big(-K_{X}-vF\big)dv + \int_1^2 (2-u)^3\int_0^\infty \mathrm{vol}\big(-K_{X}-wF\big)dw du\Bigg)=\\
&=\frac{3}{4}\Bigg(S_{X}(F) +  \frac{1}{4}\cdot S_{X}(F)\Bigg)=\frac{15}{16}S_{X}(F)\le \frac{15}{16}\cdot \frac{A_{X}(F)}{\delta_Q(X)}
\end{align*}}
We see that $\delta_Q(\mathbf{X})\ge \frac{16}{15}\delta_Q(X)$.  
Thus, by Main Theorem if every fiber of $p_1$ has at most $\mathbb{D}_4$ singularities the result follows.
\end{proof}
\section{Proof of Main Theorem via  Kento Fujita’s formulas}
Let $X$ be a Du Val del Pezzo surface, and let $S$ be a  minimal resolution of $X$. Let  $f\colon\widetilde{X}\to X$ be a birational morphism,
let $E$ be a prime divisor in $\widetilde{X}$.
We say that $E$ is a prime divisor \emph{over} $X$.
If~$E$~is~\mbox{$f$-exceptional}, we say that $E$ is an~exceptional invariant prime divisor \emph{over}~$X$.
We will denote the~subvariety $f(E)$ by $C_X(E)$. 
Let \index{$S_X(E)$}
$$
S_X(E)=\frac{1}{(-K_X)^2}\int_{0}^{\tau}\mathrm{vol}(f^*(-K_X)-vE)dv\text{ and }A_X (E) = 1 + \mathrm{ord}_E(K_{\widetilde{X}} - f^*(K_X)),
$$
where $\tau=\tau(E)$ is the~pseudo-effective threshold of $E$ with respect to $-K_X$.
Let $Q$ be a point in $X$. We can define a local $\delta$-invariant and a global $\delta$-invariant now
$$
\delta_Q(X)=\inf_{\substack{E/X\\ Q\in C_X(E)}}\frac{A_X(E)}{S_X(E)}\text{ and }\delta(X)=\inf_{Q\in X}\delta_Q(X)
$$
where the~infimum runs over all prime divisors $E$ over the surface $X$ such that $Q\in C_X(E)$. Similarly, for the  surface $S$ and a point $P$ on $S$ we define:
$$
\delta_P(S)=\inf_{\substack{F/S\\ P\in C_S(F)}}\frac{A_S(F)}{S_S(F)}
\text{
and }\delta(S)=\inf_{P\in S}\delta_P(S)$$
where $S_S(F)$ and $A_S(F)$ are defined as $S_X(E)$ and $A_X(E)$ above. Note that it is clear that
$$\delta(X)=\delta(S)\text{ and }\delta_Q(X)=\inf_{P: Q=f(P)}\delta_P(S)$$
Several results can help us to estimate $\delta$-invariants.
Let $C$ be a smooth curve on $S$ containing $P$. 
Set
$$
\tau(C)=\mathrm{sup}\Big\{v\in\mathbb{R}_{\geqslant 0}\ \big\vert\ \text{the divisor  $-K_S-vC$ is pseudo-effective}\Big\}.
$$
For~$v\in[0,\tau]$, let $P(v)$ be the~positive part of the~Zariski decomposition of the~divisor $-K_S-vC$,
and let $N(v)$ be its negative part. 
Then we set $$
S\big(W^C_{\bullet,\bullet};P\big)=\frac{2}{K_S^2}\int_0^{\tau(C)} h(v) dv,
\text{ where }
h(v)=\big(P(v)\cdot C\big)\times\big(N(v)\cdot C\big)_P+\frac{\big(P(v)\cdot C\big)^2}{2}.
$$
It follows from {\cite[Theorem 1.7.1]{Fano21}} that:
\begin{equation}\label{estimation1}
    \delta_P(S)\geqslant\mathrm{min}\Bigg\{\frac{1}{S_S(C)},\frac{1}{S(W_{\bullet,\bullet}^C,P)}\Bigg\}.
\end{equation}
Unfortunately, using this approach we do not always get a good estimation. In this case, we can try to apply the generalization of this method. Let $\sigma\colon \widehat{S}\to S$ be a weighted blowup of the point $P$ on $S$. Suppose, in addition, that $\widehat{S}$ is a Mori Dream space Then
\begin{itemize}
\item the~$\sigma$-exceptional curve $E_P$ such that $\sigma(E_P)=P$, it is smooth and isomorphic to $\DP^1$,
\item the~log pair $(\widehat{S},E_P)$ has purely log terminal singularities.
\end{itemize}
Thus, the birational map $\sigma$ a plt blowup of a point $P$.
Write
$$
K_{E_P}+\Delta_{E_P}=\big(K_{\widehat{S}}+E_P\big)\big\vert_{E_P},
$$
where $\Delta_{E_P}$ is an~effective $\mathbb{Q}$-divisor on $E_P$ known as the~different of the~log pair $(\widehat{S},E_P)$.
Note that the~log pair $(E_P,\Delta_{E_P})$ has at most Kawamata log terminal singularities, and the~divisor $-(K_{E_P}+\Delta_{E_P})$ is $\sigma\vert_{E_P}$-ample.
\\Let $O$ be a point on $E_P$. 
Set
$$
\tau(E_P)=\mathrm{sup}\Big\{v\in\mathbb{R}_{\geqslant 0}\ \big\vert\ \text{the divisor  $\sigma^*(-K_S)-vE_P$ is pseudo-effective}\Big\}.
$$
For~$v\in[0,\tau]$, let $\widehat{P}(v)$ be the~positive part of the~Zariski decomposition of the~divisor $\sigma^*(-K_S)-vE_P$,
and let $\widehat{N}(v)$ be its negative part. 
Then we set $$
S\big(W^{E_P}_{\bullet,\bullet};O\big)=\frac{2}{K_{\widehat{S}}^2}\int_0^{\tau(E_P)} \widehat{h}(v) dv,
\text{ where }
\widehat{h}(v)=\big(\widehat{P}(v)\cdot E_P\big)\times\big(\widehat{N}(v)\cdot E_P\big)_O+\frac{\big(\widehat{P}(v)\cdot E_P\big)^2}{2}.
$$
Let
$A_{E_P,\Delta_{E_P}}(O)=1-\mathrm{ord}_{\Delta_{E_P}}(O)$.
It follows from {\cite[Theorem 1.7.9]{Fano21}} and {\cite[Corollary 1.7.12]{Fano21}} that
\begin{equation}
\label{estimation2}
\delta_P(S)\geqslant\mathrm{min}\Bigg\{\frac{A_S(E_P)}{S_S(E_P)},\inf_{O\in E_P}\frac{A_{E_P,\Delta_{E_P}}(O)}{S\big(W^{E_P}_{\bullet,\bullet};O\big)}\Bigg\},
\end{equation}
where the~infimum is taken over all points $O\in E_P$.
\\ We will apply \ref{estimation1} and \ref{estimation2} to all minimal resolutions $S$ such that $K_S^2=1$ in order to prove Main Theorem. In case  $X$ is smooth we have $S=X$. Small circles correspond to $(-1)$-curves and large circles correspond to $(-2)$-curves on dual graphs.

\section{Du Val del Pezzo Surfaces of Degree $1$}
\noindent In \cite[Lemma 2.16]{Fano21} it was proven that $\delta(X)=\frac{15}{7}$ when $X$ is a smooth del Pezzo surface of degree $1$ and $|-K_X|$ contains a cuspidal curve, and $\delta(X)=\frac{12}{5}$ when $X$ is a smooth del Pezzo surface of degree $1$ and $|-K_X|$ does not contain a cuspidal curve. 
\par
We consider a Del Pezzo surface $X$ of degree one with at worst Du Val singularities and denote its minimal resolution by $ \pi: S \to X $. The surface $X$ can be embedded as a degree six hypersurface in the weighted projective space $ \mathbb{P}(1,1,2,3) $, given by the equation
$$
w^2 = a z^3 + z^2 f_2(x, y) + z f_4(x, y) + f_6(x, y),
$$
where $ f_2, f_4, f_6 $ are homogeneous polynomials in $X$ and $ y $ of degrees $2$, $4$, and $6$ respectively, and $a \in \mathbb{C} $ is a constant. This defines $X$ as a double cover $\varphi: X \rightarrow \mathbb{P}(1,1,2),$
given by: $$ (x : y : z : w) \mapsto (x : y : z), $$ branched along the sextic curve
$$
R : a z^3 + z^2 f_2(x, y) + z f_4(x, y) + f_6(x, y) = 0 \subset \mathbb{P}(1,1,2).
$$
The branch curve $R$ has degree six an is in general singular. There is a natural one-to-one correspondence between the singularities of $R$ and the singular points of the surface $X$; that is, the singularities of $X$ lie precisely above the singular points of $R$.
As shown in \cite{Kosta09}, the singular points of $X$ are not contained in the base locus of the anti-canonical linear system $ |-K_X| $. In other words, they are not fixed points of this system.
\par In this section, we compute  $\delta$-invariants  of Du Val del Pezzo surfaces of degree $1$. 
\begin{maintheorem*}
 Let $X$ be a Du Val del Pezzo surface of degree $1$. Then $X$ can be realized as the double cover $X\xrightarrow{2:1}\DP(1,1,2)$, which is
ramified along a sextic curve $R\in \DP(1,1,2)$.  Then the  $\delta$-invariant of $X$ is uniquely determined by the type of singularities on $X$  and unique element $\mathcal{C}$ of $|-K_X|$ containing each of singular points which is given in the following table:
{\renewcommand\arraystretch{1.2}
	\begin{longtable}{|c||c|}
		\hline
Type of singularity  & $\delta(X)$\\
\hline \hline
\shortstack{{ }\\$\DA_1$, $2\DA_1$, $3\DA_1$, $4\DA_1$, $5\DA_1$, $6\DA_1$\\
all elements of $|-K_X|$ containing singular points are nodal}  & \shortstack{{ }\\$2$\\{ }{ }} \\
\hline 
\shortstack{{ }\\$\DA_1$, $2\DA_1$, $3\DA_1$, $4\DA_1$, $5\DA_1$, $6\DA_1$\\
 some elements of $|-K_X|$ containing\\
 singular points are cuspidal } & \shortstack{{ }\\$\frac{9}{5}$\\{ }\\{ }}\\
 \hline 
 \shortstack{{ }\\
$\DA_2$, $\DA_2+\DA_1$, $\DA_2+2\DA_1$,  $\DA_2+3\DA_1$, $\DA_2+4\DA_1$,\\
$2\DA_2$, $2\DA_2+\DA_1$, $2\DA_2+2\DA_1$, $3\DA_2$, $3\DA_2+\DA_1$, $4\DA_2$\\
all elements of $|-K_X|$ containing $\DA_2$ singular points are nodal}  &  \shortstack{$\frac{12}{7}$\\{ }\\{ }} \\
\hline \
\shortstack{{ }\\$\DA_2$, $\DA_2+\DA_1$, $\DA_2+2\DA_1$,  $\DA_2+3\DA_1$, $\DA_2+4\DA_1$,\\
$2\DA_2$, $2\DA_2+\DA_1$, $2\DA_2+2\DA_1$, $3\DA_2$, $3\DA_2+\DA_1$, $4\DA_2$\\
some elements of $|-K_X|$ containing $\DA_2$ singular points are cuspidal}
& \shortstack{$\frac{3}{2}$\\{ }\\{ }}\\
\hline 
\shortstack{{ }\\$\DA_3$, $\DA_3+\DA_1$, $\DA_3+2\DA_1$, $\DA_3+3\DA_1$, $\DA_3+4\DA_1$,\\
$\DA_3+\DA_2$, $\DA_3+\DA_2+\DA_1$, $\DA_3+\DA_2+2\DA_1$,\\
$2\DA_3$, $2\DA_3+\DA_1$, $2\DA_3+2\DA_1$}& \shortstack{$\frac{3}{2}$\\{ }\\{ }}\\
\hline
$\DA_4$, $\DA_4+\DA_1$, $\DA_4+2\DA_1$, $\DA_4+\DA_2$, $\DA_4+\DA_2+\DA_1$, $\DA_4+\DA_3$, $2\DA_4$  & {$\frac{4}{3}$}\\\hline
$\DA_5$, $\DA_5+\DA_1$, $\DA_5+2\DA_1$, $\DA_5+\DA_2$, $\DA_5+\DA_2+\DA_1$  &  $\frac{6}{5}$ \\
\hline
$\DA_6$, $\DA_6+\DA_1$  &  $\frac{9}{8}$ \\
\hline
$\DA_7$  and $R$ irreducible &  $\frac{18}{17}$\\
\hline
$\DA_7$, $\DA_7+\DA_1$ and $R$ reducible & $1$\\ 
\hline
$\DA_8$, $\mathbb{D}_4$, $\mathbb{D}_4+\DA_1$, $\mathbb{D}_4+2\DA_1$, $\mathbb{D}_4+3\DA_1$, $\mathbb{D}_4+\DA_2$, $\mathbb{D}_4+\DA_3$, $2\mathbb{D}_4$ & $1$
\\\hline
$\mathbb{D}_5$, $\mathbb{D}_5+\DA_1$, $\mathbb{D}_5+2\DA_1$, $\mathbb{D}_5+\DA_2$, $\mathbb{D}_5+\DA_3$ &  $\frac{6}{7}$\\ 
\hline
$\mathbb{D}_6$, $\mathbb{D}_6+\DA_1$, $\mathbb{D}_6+2\DA_1$ &  $\frac{3}{4}$ \\ 
\hline
$\mathbb{D}_7$ &  $\frac{2}{3}$ \\ 
\hline
$\mathbb{D}_8$, $\mathbb{E}_6$, $\mathbb{E}_6+\DA_1$, $\mathbb{E}_6+\DA_2$  &  $\frac{3}{5}$
\\\hline
$\mathbb{E}_7$, $\mathbb{E}_7+\DA_1$  & $\frac{3}{7}$ \\
\hline
$\mathbb{E}_8$  &  $\frac{3}{11}$ \\\hline
	\end{longtable}}
Note that when $X$ has $\DA_7$ singularity $\delta$-invariant depends on whether $R$ is reducible or irreducible.
  \end{maintheorem*}

To understand the anti-canonical system on the smooth surface $ S $, we apply the Riemann--Roch theorem together with Serre duality and the Kawamata--Viehweg vanishing theorem. For the divisor $ -K_S $, we have
$$
\chi(\mathcal{O}_S(-K_S)) = h^0(S, \mathcal{O}_S(-K_S)) - h^1(S, \mathcal{O}_S(-K_S)) + h^2(S, \mathcal{O}_S(-K_S)).
$$
Since $ -K_S $ is nef and big, the vanishing theorems imply $ h^1 = h^2 = 0 $, and therefore
$$
h^0(S, \mathcal{O}_S(-K_S)) = \chi(\mathcal{O}_S(-K_S)) = \frac{1}{2}K_S^2 + 1 = K_S^2 + 1.
$$
Thus, the anti-canonical system $ |-K_S| $ has dimension
$$
\dim |-K_S| = h^0(S, \mathcal{O}_S(-K_S)) - 1 = K_S^2 = 1,
$$
\begin{definition}
Let $ \pi : S \to X $ be a resolution of a point $ P $ on a normal surface $X$, and let $ E = \sum E_i $ denote the exceptional divisor over $ P $. Then there exists a unique effective exceptional divisor
$
\Gamma = \sum a_i E_i, \quad a_i \in \mathbb{Z}_{> 0},
$
satisfying the following properties:
\begin{enumerate}
  \item $ \Gamma > 0 $,
  \item $ \Gamma \cdot E_i \leq 0 $ for every component $ E_i $,
  \item $ \Gamma $ is minimal with respect to this property.
\end{enumerate}
The divisor $ \Gamma $ is called the {\bf fundamental cycle} of the configuration $ \{E_i\} $.
\end{definition}
In the context of Del Pezzo surfaces of degree one, \cite{Kosta09} shows the following result: let $ H \in |-K_S| $ be an anti-canonical divisor on the resolution $S$, and let $ \Gamma $ be the fundamental cycle of the exceptional divisor over a Du Val singularity. If the curve $ H $ contains a point of $ \Gamma $, then
$
H = C + \Gamma,
$
where $ C \subset S $ is the strict transform of a $(-1)$-curve $\mathcal{C}$ on $X$.
Moreover, all fundamental cycles arising from Du Val singularities on degree one Del Pezzo surfaces are explicitly described in \cite{Kosta09} based on \cite{KodairaI,KodairaII,KodairaIII}, including their configurations and intersection properties.
\par
Let $ C \subset S $ be a $ (-1) $-curve arising as the strict transform of a curve $ \mathcal{C} \subset X $. Contracting $ C $ yields a weak resolution of a Du Val Del Pezzo surface of degree two. In the previous section, we provided a complete classification of the dual graphs formed by $ (-1) $- and $ (-2) $-curves on such surfaces. Notably, all $ (-1) $-curves on $ S $ that intersect the exceptional divisors arise as strict transforms of $ (-1) $-curves on weak Del Pezzo surfaces of degree two. We  use this classification throughout the chapter. To determine the possible local dual graphs of singularities on $ X $, we proceed as follows: starting with a weak Del Pezzo surface of degree one with singularities $ \mathbf{``S"} $, we contract a $ (-1) $-curve as described above to obtain a weak Del Pezzo surface of degree two with singularities $ \mathbf{``S2"} $, which are uniquely determined by $ \mathbf{``S"}$. Since we have a full classification of dual graphs for weak Del Pezzo surfaces of degree two, we identify all occurrences of $ \mathbf{``S2"} $ and recover from them all possible configurations of $ (-1) $- and $ (-2) $-curves on the original surface with singularities $ \mathbf{``S"} $. This procedure allows for an explicit case-by-case description of all possible local dual graphs of singularities on $ X $. 
\par For each such surface, we compute the value of the $\delta$-invariant at every singular point, and we also have an estimate for the $\delta$-invariant at smooth points from the computations in \cite[Lemma~2.16]{Fano21}. Taking the minimum of these values yields a value for the global $\delta$-invariant which is determined by the singularities of the surface.

\subsection{Finding $\delta$-invariants for degree 1}
\subsubsection{$\DA_1$ singularity on Du Val Del Pezzo surfaces of degree $1$ such that $\mathcal{C}$ is nodal}
\begin{lemma}
Let $X$ be a singular del Pezzo surface of degree $1$ with an $\DA_1$ singularity at point $\mathcal{P}$.  Let $\mathcal{C}$ be a~curve in the~pencil $|-K_X|$ that contains~$\mathcal{P}$ and it has a node in $\mathcal{P}$. Then $\delta_{\mathcal{P}} (X)=2$.
\end{lemma} 
 \begin{proof}
Let $S$ be the minimal resolution of singularities.  Then $S$ is a weak del Pezzo surface of degree $1$. Suppose $C$ is a strict transform of $\mathcal{C}$ on $S$ and $E$ is the exceptional divisor. We have $-K_S\sim C+E$. Let $P$ be a point on $S$.
\begin{figure}[h!]
    \centering
 \includegraphics[width=7cm]{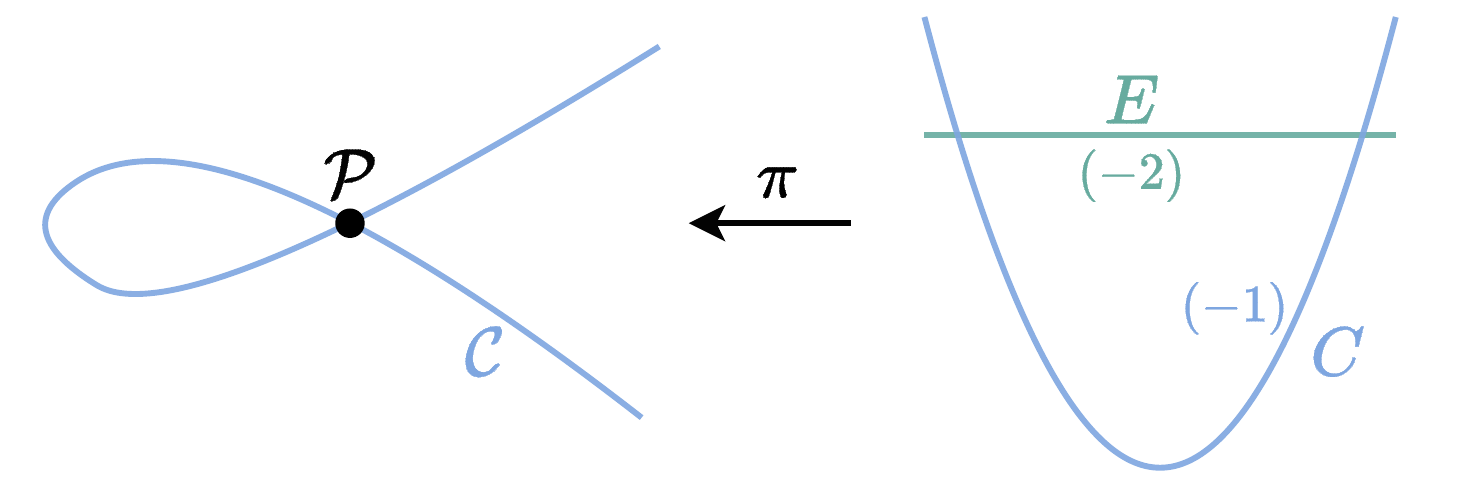}
    \caption{Picture: $(-K_S)^2=1$, $\DA_1$ singularity (nodal)}
\end{figure}
\par Suppose $P\in E$. Then $\tau(E)=1$ and the Zariski decomposition of the divisor $-K_S-vE\sim C+(1-v)E$ is given by:
$$P(v)=\begin{cases}
-K_S-vE \text{ if }v\in\big [0,\frac{1}{2}\big ],\\
-K_S-vE-(2v-1)C \text{ if } v\in\big [\frac{1}{2}, 1\big ].
\end{cases}
N(v)=
\begin{cases}
0 \text{ if }v\in\big [0,\frac{1}{2}\big ],\\
(2v-1)C \text{ if } v\in\big [\frac{1}{2}, 1\big ].
\end{cases}$$
Moreover, 
$$(P(v))^2=\begin{cases}
1-2v^2  \text{ if }v\in\big [0,\frac{1}{2}\big ],\\
2(v - 1)^2  \text{ if } v\in\big [\frac{1}{2}, 1\big ].
\end{cases}
P(v)\cdot E=\begin{cases}
2v  \text{ if }v\in\big [0,\frac{1}{2}\big ],\\
2(1-v)  \text{ if } v\in\big [\frac{1}{2}, 1\big ].
\end{cases}$$
We have
$S_{S} (E)=\frac{1}{2}$. Thus, $\delta_P(S)\le 2$ for $P\in E$. Moreover, if $P\in E$:
$$h(v)\le\begin{cases}
2v^2 \text{ if }v\in\big [0,\frac{1}{2}\big ],\\
2v(1-v)  \text{ if } v\in\big [\frac{1}{2}, 1\big ].
\end{cases}$$
Thus, $S(W_{\bullet,\bullet}^{E};P)\le\frac{1}{2}$ and We get $\delta_P(S)=2$ for $P\in E$. Which gives us $\delta_{\mathcal{P}} (X)=2$.
\end{proof}
\subsubsection{$\DA_1$ singularity on Du Val Del Pezzo surfaces of degree $1$ such that $\mathcal{C}$ is cuspidal}
 \begin{lemma}
 Let $X$ be a singular del Pezzo surface of degree $1$ with an $\DA_1$ singularity at point $\mathcal{P}$. Let $\mathcal{C}$ be a~curve in the~pencil $|-K_X|$ that contains~$\mathcal{P}$ and it has a cusp in $\mathcal{P}$. Then $\delta_{\mathcal{P}} (X)=\frac{9}{5}$.
 \end{lemma}
 \begin{proof}
 Consider the blowup $\pi_1:S_1\to X$ of $X$ at $\mathcal{P}$ with the exceptional divisor $E_1^1$ and $C^1$ is a strict transform of $\mathcal{C}$.
Let $\pi_2\colon S_2\to S_1$ be the~blow up of the~point $C^1\cap E_1^1$ with the exceptional divisor $E_2^2$ and $E_1^2$, $C^2$ are a strict transforms of $E_1^1$, $C^1$ respectively.
Let $\pi_3\colon S_3\to S_2$ be the~blow up of the~point $C^2\cap E_1^2\cap E_2^2$ with the exceptional divisor $E$ and $E_1^3$, $E_2^3$, $C^3$ are a strict transforms of $E_1^2$, $E_2^2$, $C^2$ respectively.
Then $(\pi_1\circ\pi_2\circ \pi_3)^*(-K_{X})\sim C^3+E_1^3+2E_2^3+4E$.
Let $\theta\colon S_3\to \overline{S}$ be the~contraction of the~curves $E_1^3$ and $E_2^3$, let $\overline{C}=\theta(C^3)$ and $\overline{E}=\theta(E)$.
\begin{figure}[h!]
    \centering
\includegraphics[width=14cm]{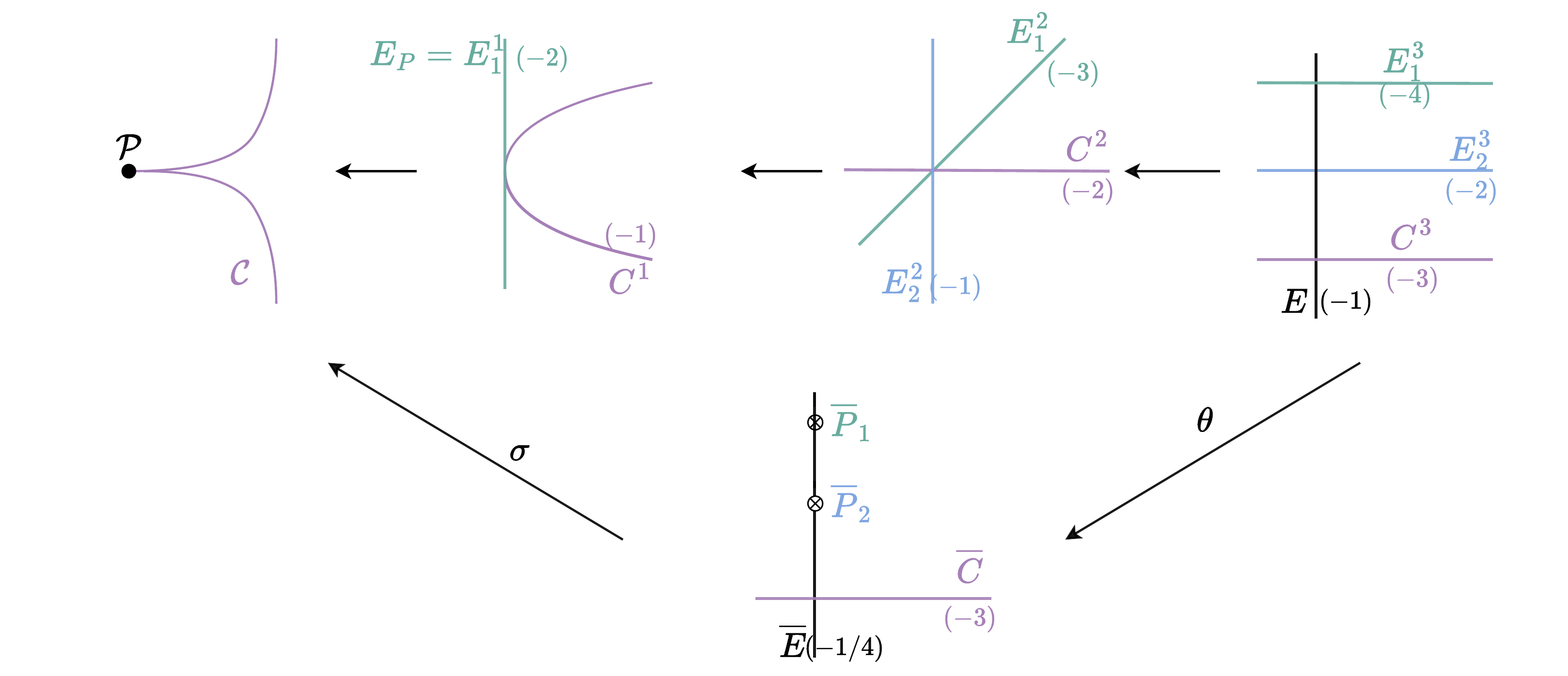}
    \caption{Picture: $(-K_S)^2=1$, $\DA_1$ singularity (cuspidal)}
\end{figure}
\par Then $\overline{P}_2=\theta(E_2^3)$ is a quotient singular point of type $\frac{1}{2} (1,1)$
and $\overline{P}_1=\theta(E_1^3)$ is a quotient singular point of type $\frac{1}{4} (1,1)$ and the intersections are given by: 
\begin{center}
\renewcommand\arraystretch{1.1}
	\begin{tabular}{|c||c|c|}
		\hline
		 &$\overline{C}$ & $\overline{E}$ \\
		\hline
		\hline
$\overline{C}$ & $-3$&\cellcolor[gray]{0.9} $1$ \\
		\hline
$\overline{E}$ &\cellcolor[gray]{0.9} $1$& $-\frac{1}{4}$  \\
		\hline
	\end{tabular}
\end{center}
Observe that $-K_{\overline{S}}$ is big. 
Then $\tau(\overline{E})=4$ and the Zariski decomposition of the divisor $\sigma^*(-K_X)-v\overline{E}\sim (4-v)\overline{E}+\overline{C}$ is given by
$$P(v)=
\begin{cases}
(4-v)\overline{E}+\overline{C}\text{ if }v\in[0,1],\\
(4-v)\overline{E}+\frac{4-v}{3}\overline{C}\text{ if }v\in[1,4].
\end{cases}
N(v)=
\begin{cases}
0\text{ if }v\in[0,1],\\
\frac{v-1}{3}\overline{C}\text{ if }v\in[1,4].
\end{cases}$$
Moreover
$$P(v)^2=
\begin{cases}
\frac{(2-v)(2+v)}{4}\text{ if }v\in[0,1],\\
\frac{(4-v)^2}{12}\text{ if }v\in[1,4].
\end{cases}
P(v)\cdot \overline{E} =
\begin{cases}
\frac{v}{4}\text{ if }v\in[0,1],\\
\frac{4-v}{12}\text{ if }v\in[1,4].
\end{cases}$$
So we have
$S_S(\overline{E})= \frac{5}{3}$ for $P\in \overline{E}$. Thus,   Thus, $\delta_P(S)\le \frac{9}{5}$.
Moreover, if $P\in  \overline{E}\backslash \overline{C}$ or $P\in  \overline{E}\cap \overline{C}$ then
$$h(v)=
\begin{cases}
\frac{v^2}{32}\text{ if }v\in[0,1],\\
\frac{(4-v)^2}{288}\text{ if }v\in[1,4].
\end{cases}
\text{ or }
h(v)=
\begin{cases}
\frac{v^2}{32}\text{ if }v\in[0,1],\\
\frac{(4 - v) (7 v - 4)}{288}\text{ if }v\in[1,4].
\end{cases}$$
So  $S(W^{\overline{E}}_{\bullet,\bullet};O)=\frac{1}{12}$
or 
$S(W^{\overline{E}}_{\bullet,\bullet};O)=\frac{1}{3}$.
On the~other hand:
$$
\delta_P(S)\geqslant\mathrm{min}\Bigg\{\frac{9}{5},\inf_{O\in\overline{E}}\frac{A_{\overline{E},\Delta_{\overline{E}}} (O)}{S\big(W^{\overline{E}}_{\bullet,\bullet};O\big)}\Bigg\},
$$
where $\Delta_{\overline{E}}=\frac{1}{2}P_1+\frac{3}{4}P_2$. So we have
$$
\frac{A_{\overline{E},\Delta_{\overline{E}}} (O)}{S(W_{\bullet,\bullet}^{\overline{E}};O)}=
\left\{\aligned
&3\ \mathrm{if}\ O=\overline{E}\cap\overline{C},\\
&3\ \mathrm{if}\ O=P_1,\\
&4\ \mathrm{if}\ O=P_2,\\
&12\ \mathrm{otherwise}.
\endaligned
\right.
$$
Thus, $\delta_{\mathcal{P}} (X)=\frac{9}{5}$.
\end{proof}
\subsubsection{$\DA_2$ singularity on Du Val Del Pezzo surfaces of degree $1$ such that $\mathcal{C}$ is nodal}
\begin{lemma} Let $X$ be a singular del Pezzo surface of degree $1$ with an $\DA_2$ singularity at point $\mathcal{P}$.  Let $\mathcal{C}$ be a~curve in the~pencil $|-K_X|$ that contains~$\mathcal{P}$ and it has a node in $\mathcal{P}$. Then $\delta_{\mathcal{P}} (X)=\frac{12}{7}$.
\end{lemma}
 \begin{proof}
Let $S$ be the minimal resolution of singularities.  Then $S$ is a weak del Pezzo surface of degree $1$. Suppose $C$ is a strict transform of $\mathcal{C}$ on $S$ and $E_1$ and $E_2$ are the exceptional divisors.  We have $-K_S\sim C+E_1+E_2$. Let $P$ be a point on $S$.
\begin{figure}[h!]
    \centering
 \includegraphics[width=7cm]{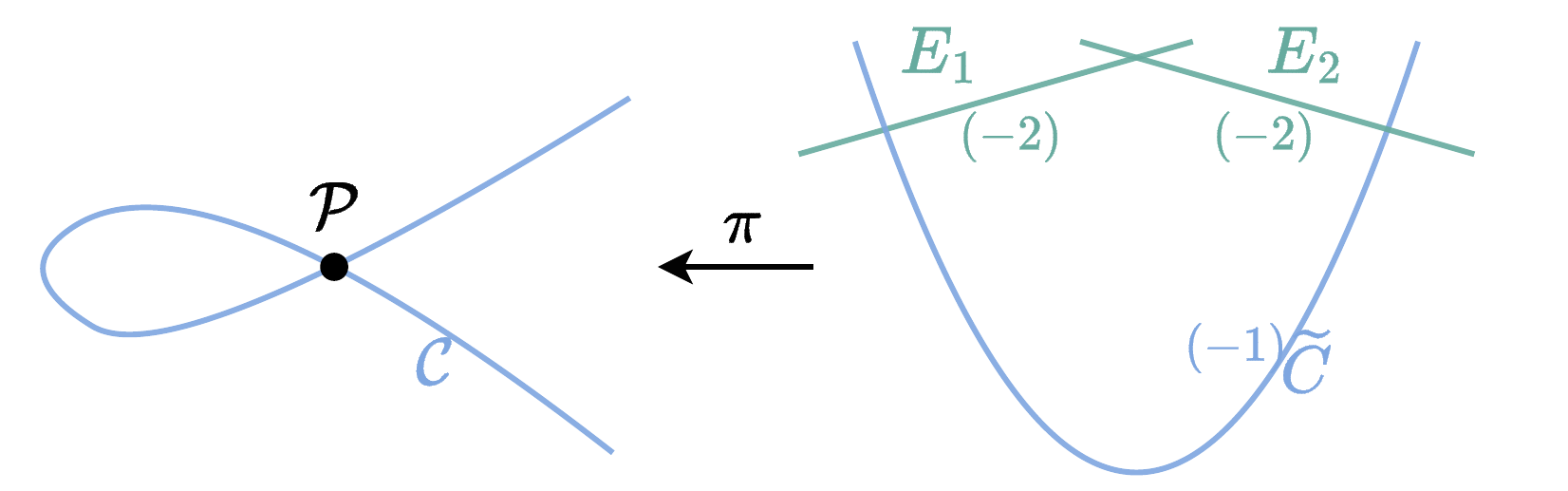}
    \caption{Picture: $(-K_S)^2=1$, $\DA_2$ singularity (nodal)}
\end{figure}
\par {\bf Step 1.} Suppose $P\in E_1\cup E_2$. Without loss of generality we can assume that $P\in E_1$ since the proof is similar in other cases. Then $\tau(E_1)=1$ and the Zariski decomposition of the divisor $-K_S-vE_1\sim C+(1-v)E_1+E_2$ is given by:
 {\allowdisplaybreaks\begin{align*}
&&P(v)=\begin{cases}
-K_S-vE_1-\frac{v}{2}E_2 \text{ if }v\in\big [0,\frac{2}{3}\big ],\\
-K_S-vE_1-(2v-1)E_2-(3v-2)C \text{ if } v\in\big [\frac{2}{3}, 1\big ].
\end{cases}\\&&
N(v)=
\begin{cases}
\frac{v}{2}E_2 \text{ if }v\in\big [0,\frac{2}{3}\big ],\\
(2v-1)E_2+(3v-2)C \text{ if }v\in\big [\frac{2}{3},1\big ].
\end{cases}
\end{align*}}
Moreover, 
$$(P(v))^2=\begin{cases}
1-\frac{3v^2}{2}  \text{ if }v\in\big [0,\frac{2}{3}\big ],\\
3(v - 1)^2  \text{ if }v\in\big [\frac{2}{3},1\big ].
\end{cases}
P(v)\cdot E=\begin{cases}
\frac{3v}{2}  \text{ if }v\in\big [0,\frac{2}{3}\big ],\\
3(1-v)  \text{ if }v\in\big [\frac{2}{3},1\big ].
\end{cases}$$
We have $S_{S} (E_1)=\frac{5}{9}$. Thus, $\delta_P(S)\le \frac{9}{5}$ for $P\in E_1\backslash E_2$. Moreover,  for such points we have 
$$h(v)\le\begin{cases}
\frac{9v^2}{8} \text{ if }v\in \big [0, \frac{2}{3}\big ],\\
\frac{3(1-v)(v + 1)}{2} \text{ if }v\in \big [\frac{2}{3},1\big ].
\end{cases}$$
Thus,
$S(W_{\bullet,\bullet}^{E_1};P)\le\frac{14}{27}<\frac{5}{9}$.
We get $\delta_P(S)=\frac{9}{5}$ for $P\in (E_1\cup E_2)\backslash (E_1\cap E_2)$. 
\par{\bf Step 2.} Suppose $P=E_1\cap E_2$. Consider the blowup $\sigma:\widetilde{S}\to S$ of $S$ at $P$ with the exceptional divisor $E_P$. Suppose $\widetilde{E}_1$,  $\widetilde{E}_2$  and $\widetilde{C}$  are strict transforma of $E_1$, $E_2$ and $C$ on $S$. 
 Then $\tau(E_P)=2$ and the  Zariski decomposition of the divisor $\sigma^*(-K_{S})-vE_P\sim \widetilde{C}+\widetilde{E}_1+\widetilde{E}_2+(2-v)E_P$ is given by:
 {\allowdisplaybreaks\begin{align*}
&&P(v)=\begin{cases}
\sigma^*(-K_{S})-vE_P-\frac{v}{3} (\widetilde{E}_1+\widetilde{E}_2) \text{ if }v\in\big[0,\frac{3}{2}\big],\\
\sigma^*(-K_{S})-vE_P-(v-1)(\widetilde{E}_1+\widetilde{E}_2)-(2v-3)\widetilde{C} \text{ if }v\in\big[\frac{3}{2},2 \big].
\end{cases}\\&&N(v)=\begin{cases}
\frac{v}{3} (\widetilde{E}_1+\widetilde{E}_2) \text{ if }v\in\big[0,\frac{3}{2}\big],\\
(v-1)(\widetilde{E}_1+\widetilde{E}_2)+(2v-3)\widetilde{C} \text{ if }v\in\big[\frac{3}{2},2 \big].
\end{cases}
\end{align*}}
Moreover,
$$(P(v))^2=
\begin{cases}
1-\frac{v^2}{3} \text{ if }v\in\big[0,\frac{3}{2}\big],\\
(2-v)^2 \text{ if }v\in\big[\frac{3}{2},2 \big].
\end{cases}
P(v)\cdot E_P=
\begin{cases}
\frac{v}{3} \text{ if }v\in\big[0,\frac{3}{2}\big],\\
2-v \text{ if }v\in\big[\frac{3}{2},2 \big].
\end{cases}$$
We have $S_{S} (E_P)=\frac{7}{6}$. Thus, $\delta_P(S)\le \frac{2}{7/6}=\frac{12}{7}$ for $P=E_1\cap E_2$. Moreover,
$$h(v)\le \begin{cases}
\frac{v^2}{6}  \text{ if }v\in\big[0,\frac{3}{2}\big],\\
\frac{(2-v)v}{2} \text{ if }v\in\big[\frac{3}{2},2 \big].
\end{cases}$$
Thus, 
$S(W_{\bullet,\bullet}^{E_P};O)\le\frac{7}{12}$.
We get $\delta_P(S)=\frac{12}{7}$ for $P=E_1\cap E_2$. Thus, $\delta_{\mathcal{P}} (X)=\frac{12}{7}$.
\end{proof}

\subsubsection{$\DA_2$ singularity on Du Val Del Pezzo surfaces of degree $1$ such that $\mathcal{C}$ is cuspidal}
 \begin{lemma} Let $X$ be a singular del Pezzo surface of degree $1$ with an $\DA_2$ singularity at point $\mathcal{P}$.  Let $\mathcal{C}$ be a~curve in the~pencil $|-K_X|$ that contains~$\mathcal{P}$ and it has a cusp in $\mathcal{P}$. Then $\delta_{\mathcal{P}} (X)=\frac{3}{2}$.
 \end{lemma}
 \begin{proof}
Let $S$ be the minimal resolution of singularities.  Then $S$ is a weak del Pezzo surface of degree $1$. 
 We have $-K_S\sim C+E_1+E_2$. Let $P$ be a point on $S$. Let also $\sigma:\widetilde{S}\to S$ be the blowup of a point $P=E_1\cap E_2\cap C$.  Let $\widetilde{C}$, $\widetilde{E}_1$ and $\widetilde{E}_2$  be strict transforms of $C$, $E_1$ and $E_2$ on $\widetilde{S}$. 
 \begin{figure}[h!]
    \centering
\includegraphics[width=12cm]{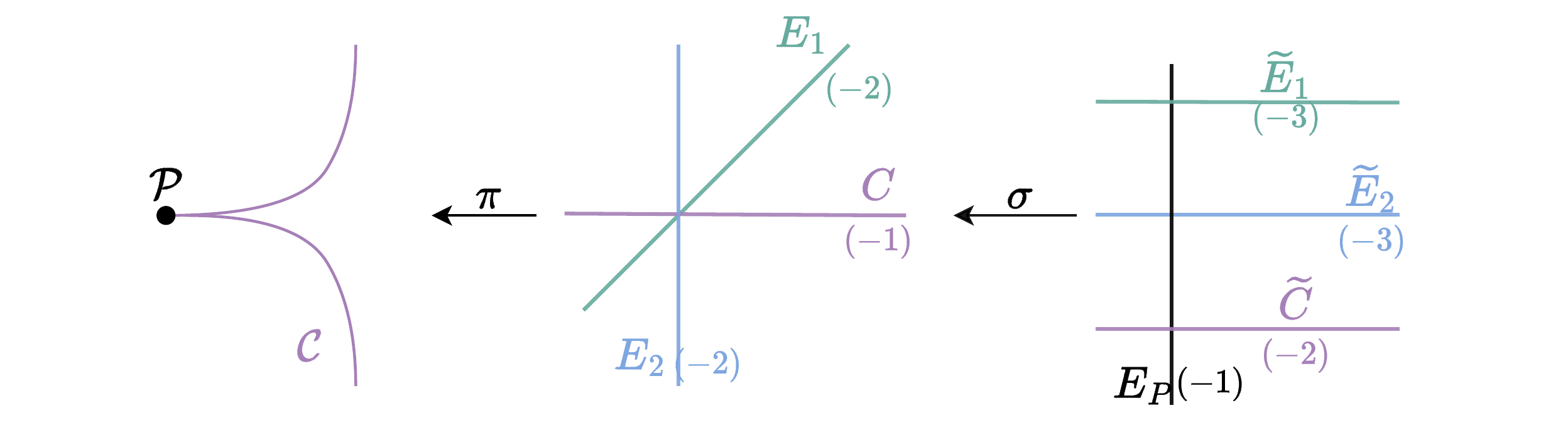}
    \caption{Picture: $(-K_S)^2=1$, $\DA_2$ singularity (cuspidal)}
\end{figure}
\par  {\bf Step 1.} Suppose $P\in E_1\cup E_2$. Without loss of generality we can assume that $P\in E_1$ since the proof is similar in other cases.  Then $\tau(E_1)=1$ and the  Zariski decomposition of the divisor $-K_S-vE_1\sim C+(1-v)E_1+E_2$ is given by:
 {\allowdisplaybreaks\begin{align*}
&&P(v)=\begin{cases}
-K_S-vE_1-\frac{v}{2}E_2 \text{ if }v\in\big [0,\frac{2}{3}\big ],\\
-K_S-vE_1-(2v-1)E_2-(3v-2)C \text{ if } v\in\big [\frac{2}{3}, 1\big ].
\end{cases}\\&&
N(v)=
\begin{cases}
\frac{v}{2}E_2 \text{ if }v\in\big [0,\frac{2}{3}\big ],\\
(2v-1)E_2+(3v-2)C \text{ if }v\in\big [\frac{2}{3},1\big ].
\end{cases}
\end{align*}}
Moreover, 
$$(P(v))^2=\begin{cases}
1-\frac{3v^2}{2}  \text{ if }v\in\big [0,\frac{2}{3}\big ],\\
3(v - 1)^2  \text{ if }v\in\big [\frac{2}{3},1\big ].
\end{cases}
P(v)\cdot E=\begin{cases}
\frac{3v}{2}  \text{ if }v\in\big [0,\frac{2}{3}\big ],\\
3(1-v)  \text{ if }v\in\big [\frac{2}{3},1\big ].
\end{cases}$$
We have $S_{S} (E_1)=\frac{5}{9}$. Thus, $\delta_P(S)\le \frac{9}{5}$ for $P\in E_1\backslash E_2$. Moreover,  for such points we have 
$$h(v)\le\begin{cases}
\frac{9v^2}{8} \text{ if }v\in \big [0, \frac{2}{3}\big ],\\
\frac{3(1-v)(v + 1)}{2} \text{ if }v\in \big [\frac{2}{3},1\big ].
\end{cases}$$
Thus,
$S(W_{\bullet,\bullet}^{E_1};P)\le \frac{14}{27}<\frac{5}{9}$.
We get $\delta_P(S)=\frac{9}{5}$ for $P\in (E_1\cup E_2)\backslash (E_1\cap E_2)$. 
\par{\bf Step 2.} Suppose $P=E_1\cap E_2$. Consider the blowup $\sigma:\widetilde{S}\to S$ of $S$ at $P$ with the exceptional divisor $E_P$. Suppose $\widetilde{E}_1$,  $\widetilde{E}_2$  and $\widetilde{C}$  are strict transforma of $E_1$, $E_2$ and $C$ on $S$. 
 Then $\tau(E_P)=3$ and the  Zariski decomposition of the divisor $\sigma^*(-K_{S})-vE_P\sim \widetilde{C}+\widetilde{E}_1+\widetilde{E}_2+(3-v)E_P$ is given by:
 {\allowdisplaybreaks\begin{align*}
&&P(v)=\begin{cases}
\sigma^*(-K_{S})-vE_P-\frac{v}{3} (\widetilde{E}_1+\widetilde{E}_2) \text{ if }v\in[0,1],\\
\sigma^*(-K_{S})-vE_P-(v-1)(\widetilde{E}_1+\widetilde{E}_2)-\frac{v-1}{2}\widetilde{C} \text{ if }v\in[1,3].
\end{cases}\\&&N(v)=\begin{cases}
\frac{v}{3} (\widetilde{E}_1+\widetilde{E}_2) \text{ if }v\in[0,1],\\
(v-1)(\widetilde{E}_1+\widetilde{E}_2)+\frac{v-1}{2}\widetilde{C} \text{ if }v\in[1,3].
\end{cases}
\end{align*}}
Moreover,
$$(P(v))^2=
\begin{cases}
1-\frac{v^2}{3} \text{ if }v\in[0,1],\\
\frac{(3-v)^2}{6} \text{ if }v\in[1,3].
\end{cases}
P(v)\cdot E_P=
\begin{cases}
\frac{v}{3} \text{ if }v\in[0,1],\\
\frac{3-v}{6} \text{ if }v\in[1,3].
\end{cases}$$
We have
$S_{S} (E_P)=\frac{4}{3}$. Thus, $\delta_P(S)\le \frac{2}{4/3}=\frac{3}{2}$ for $P=E_1\cap E_2\cap C$. Moreover, if $O\in E_P\backslash (\widetilde{E}_1\cup \widetilde{E}_2)$ if $O\in E_P\backslash \widetilde{C}$ we have:
$$h(v)\le \begin{cases}
\frac{v^2}{18}  \text{ if }v\in[0,1],\\
\frac{(3 - v)(5v - 3)}{72} \text{ if }v\in[1,3].
\end{cases}
\text{ or }
h(v)\le \begin{cases}
\frac{v^2}{6}  \text{ if }v\in[0,1],\\
\frac{(3 - v)(v +1)}{24} \text{ if }v\in[1,3].
\end{cases}$$
Thus, $S(W_{\bullet,\bullet}^{E_P};O)\le \frac{1}{3}<\frac{2}{3}$ or 
$S(W_{\bullet,\bullet}^{E_P};O)\le \frac{5}{9}<\frac{2}{3}$.
We get $\delta_P(S)=\frac{3}{2}$ for $P=E_1\cap E_2$. \\ Thus, $\delta_{\mathcal{P}} (X)=\frac{3}{2}$.
\end{proof}

\subsubsection{$\DA_3$ singularity on Du Val Del Pezzo surfaces of degree $1$}
 \begin{lemma} Let $X$ be a singular del Pezzo surface of degree $1$ with an $\DA_3$ singularity at point $\mathcal{P}$.  Let $\mathcal{C}$ be a~curve in the~pencil $|-K_X|$ that contains~$\mathcal{P}$.  Then $\delta_{\mathcal{P}} (X)=\frac{3}{2}$.
 \end{lemma}
 \begin{proof}
Let $S$ be the minimal resolution of singularities.  Then $S$ is a weak del Pezzo surface of degree $1$. Suppose $C$ is a strict transform of $\mathcal{C}$ on $S$ and $E_1$, $E_2$ and $E_3$ are the exceptional divisors with the following intersection:
 \begin{figure}[h!]
    \centering
\includegraphics[width=4cm]{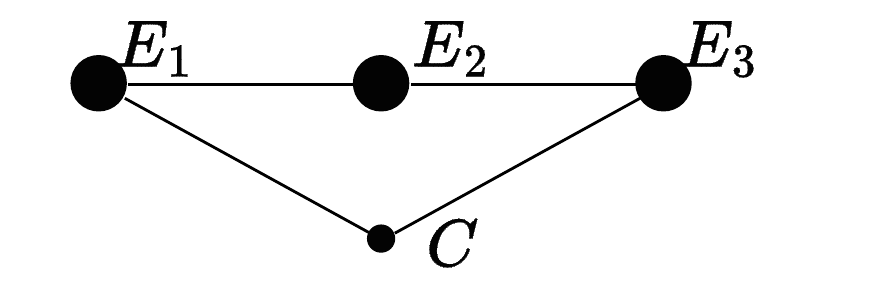}
    \caption{Dual graph: $(-K_S)^2=1$, $\DA_3$ singularity}
\end{figure}
\par We have $-K_S\sim C+E_1+E_2+E_3$. Let $P$ be a point on $S$.\\
\noindent {\bf Step 1.} Suppose $P\in E_2$.  Then $\tau(E_2)=1$ and the  Zariski decomposition of the divisor $-K_S-vE_2\sim C+E_1+(1-v)E_2+E_3$ is given by:
$$P(v)=
-K_S-vE_2-\frac{v}{2} (E_1+E_3)
\text{ and }
N(v)=
\frac{v}{2} (E_1+E_3)\text{ if }v\in[0,1].$$
Moreover, 
$$(P(v))^2=(1-v)(1+v)\text{ and }
P(v)\cdot E_2=v\text{ if }v\in[0,1].$$
We have
$S_{S} (E_2)=\frac{2}{3}$. Thus, $\delta_P(S)\le \frac{3}{2}$ for $P\in E_2$. Moreover,  for such points we have 
$h(v)\le v^2\text{ if }v\in[0,1]$. Thus,
$S(W_{\bullet,\bullet}^{E_2};P)\le\frac{2}{3}$.
We get $\delta_P(S)=\frac{3}{2}$ for $P\in E_2$. 
\par{\bf Step 2.} Suppose $P\in E_1\cup E_3$. Without loss of generality we can assume that $P\in E_1$ since the proof is similar in other cases.  Then $\tau(E_1)=1$ and the  Zariski decomposition of the divisor $-K_S-vE_1\sim  C+(1-v)E_1+E_2+E_3$ is given by:
 {\allowdisplaybreaks\begin{align*}
&&P(v)=\begin{cases}
-K_S-vE_1-\frac{v}{3} (2E_2+E_3) \text{ if }v\in\big [0,\frac{3}{4}\big ],\\
-K_S-vE_1-(2v-1)E_2-(3v-2)E_3-(4v-3)C \text{ if } v\in\big [\frac{3}{4}, 1\big ]
\end{cases}\\&&
N(v)=
\begin{cases}
\frac{v}{3} (2E_2+E_3) \text{ if }v\in\big [0,\frac{3}{4}\big ],\\
(2v-1)E_2+(3v-2)E_3+(4v-3)C \text{ if }v\in\big [\frac{3}{4},1\big ].
\end{cases}
\end{align*}}
Moreover, 
$$(P(v))^2=\begin{cases}
1-\frac{4v^2}{3}  \text{ if }v\in\big [0,\frac{3}{4}\big ],\\
4(v - 1)^2  \text{ if }v\in\big [\frac{3}{4},1\big ].
\end{cases}
P(v)\cdot E_1=\begin{cases}
\frac{4v}{3}  \text{ if }v\in\big [0,\frac{3}{4}\big ],\\
4(1-v)  \text{ if }v\in\big [\frac{3}{4},1\big ].
\end{cases}$$
We have $S_{S} (E_1)=\frac{5}{9}$. Thus, $\delta_P(S)\le \frac{9}{5}$ for $P\in E_1\backslash E_2$. Moreover,  for such points we have 
$$h(v)\le\begin{cases}
\frac{8v^2}{9} \text{ if }v\in \big [0, \frac{3}{4}\big],\\
4(1-v)(2v - 1) \text{ if }v\in \big [\frac{3}{4},1\big].
\end{cases}$$
Thus,
$S(W_{\bullet,\bullet}^{E_1};P)\le \frac{5}{12}<\frac{7}{12}$.
We get $\delta_P(S)=\frac{12}{7}$ for $P\in (E_1\cup E_3)\backslash E_2$. Thus, $\delta_{\mathcal{P}} (X)=\frac{3}{2}$.
\end{proof}

\subsubsection{$\DA_4$ singularity on Du Val Del Pezzo surfaces of degree $1$}\label{dP1-A4}
 \begin{lemma}
      Let $X$ be a singular del Pezzo surface of degree $1$ with an $\DA_4$ singularity at point $\mathcal{P}$.  Let $\mathcal{C}$ be a~curve in the~pencil $|-K_X|$ that contains~$\mathcal{P}$. Then $\delta_{\mathcal{P}} (X)=\frac{4}{3}$.
    \end{lemma}
 \begin{proof}
Let $S$ be the minimal resolution of singularities.  Then $S$ is a weak del Pezzo surface of degree $1$. Suppose $C$ is a strict transform of $\mathcal{C}$ on $S$ and $E_1$, $E_2$, $E_3$ and $E_4$ are the exceptional divisors with the intersection:
 \begin{figure}[h!]
    \centering
\includegraphics[width=6cm]{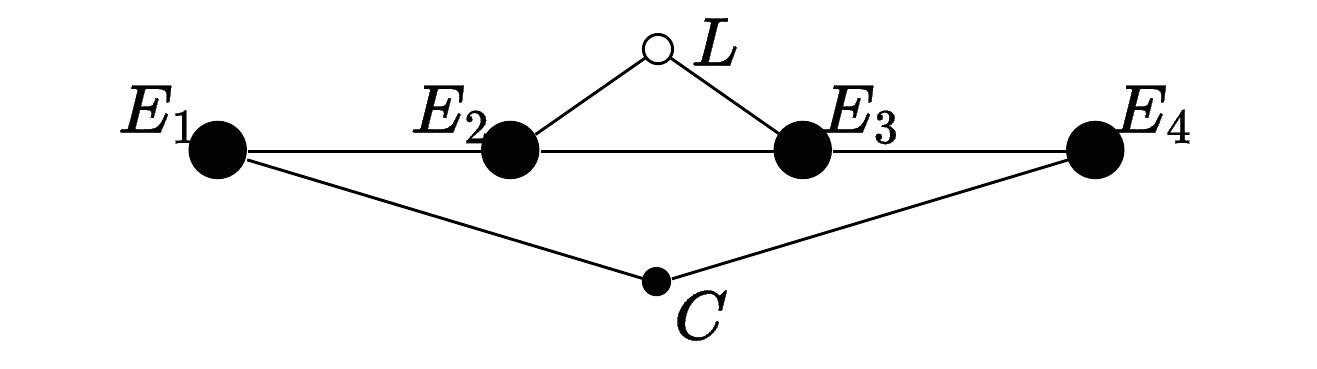}
    \caption{Dual graph: $(-K_S)^2=1$, $\DA_4$ singularity}
\end{figure}
\par We have $-K_S\sim C+E_1+E_2+E_3+E_4$. Let $P$ be a point on $S$.
Consider a linear system  $\mathcal{L}=| -2K_S-(E_1 + 2E_2 + 2E_3 + E_4)|$. Using Riemann-Roch for surfaces we get $\dim|\mathcal{L}|=1$. Thus, since the linear system $|\mathcal{L}|$ does not have base points there is a unique element $L\in |\mathcal{L}|$ such that it contains the intersection point of $E_2$ and $E_3$. Moreover we have $L\cdot E_1=L\cdot E_4=0$, $L\cdot E_2=L\cdot E_3=1$ and $L^2=0$.
\par{\bf Step 1.} Suppose $P=E_2\cap E_3$. Consider the blowup $\sigma:\widetilde{S}\to S$ of $S$ at $P$ with the exceptional divisor $E_P$. Suppose $\widetilde{E}_1$,  $\widetilde{E}_2$, $\widetilde{E}_3$, $\widetilde{E}_4$, $\widetilde{L}$  and $\widetilde{C}$  are strict transforms of $E_1$, $E_2$, $E_3$, $E_4$, $L$ and $C$ on $\widetilde{S}$. 
 Then $\tau(E_P)=\frac{5}{2}$ and the  Zariski decomposition of the divisor 
$$\sigma^*(-K_{S})-vE_P\sim \Big(\frac{5}{2}-v\Big)E_P+\frac{1}{2}\widetilde{L}+\widetilde{E}_1+\frac{3}{2}\widetilde{E}_2+\frac{3}{2}\widetilde{E}_3+\widetilde{E}_4$$ is given by:
 {\allowdisplaybreaks\begin{align*}
&&P(v)=\begin{cases}
\sigma^*(-K_{S})-vE_P-\frac{v}{5} (\widetilde{E}_1+2\widetilde{E}_2+2\widetilde{E}_3+\widetilde{E}_4) \text{ if }v\in[0,2],\\
\sigma^*(-K_{S})-vE_P-\frac{v}{5} (\widetilde{E}_1+2\widetilde{E}_2+2\widetilde{E}_3+\widetilde{E}_4)-(v-2)\widetilde{L} \text{ if }v\in\big[2,\frac{5}{2} \big].
\end{cases}\\&&N(v)=\begin{cases}
\frac{v}{5} (\widetilde{E}_1+2\widetilde{E}_2+2\widetilde{E}_3+\widetilde{E}_4) \text{ if }v\in[0,2],\\
\frac{v}{5} (\widetilde{E}_1+2\widetilde{E}_2+2\widetilde{E}_3+\widetilde{E}_4)-(v-2)\widetilde{L} \text{ if }v\in\big[2,\frac{5}{2} \big].
\end{cases}
\end{align*}}
Moreover,
$$(P(v))^2=
\begin{cases}
1-\frac{v^2}{5} \text{ if }v\in[0,2],\\
\frac{(5-2v)^2}{5} \text{ if }v\in\big[2,\frac{5}{2} \big].
\end{cases}
P(v)\cdot E_P=
\begin{cases}
\frac{v}{5} \text{ if }v\in[0,2],\\
2(1-\frac{2v}{5}) \text{ if }v\in\big[2,\frac{5}{2} \big].
\end{cases}$$
We have $S_{S} (E_P)=\frac{3}{2}$. Thus, $\delta_P(S)\le \frac{2}{3/2}=\frac{4}{3}$ for $P=E_2\cap E_3$. Moreover, if $O\in E_P\backslash (\widetilde{E}_2\cup \widetilde{E}_3)$ if $O\in E_P\backslash \widetilde{L}$ we have:
$$h(v)\le \begin{cases}
\frac{v^2}{50}  \text{ if }v\in[0,2],\\
\frac{2(5 - 2v)(3v - 5)}{25} \text{ if }v\in\big[2,\frac{5}{2} \big].
\end{cases}
\text{ or }
h(v)\le \begin{cases}
\frac{v^2}{10}  \text{ if }v\in[0,2],\\
\frac{2(5 - 2v)}{5} \text{ if }v\in\big[2,\frac{5}{2} \big].
\end{cases}$$
Thus,
$S(W_{\bullet,\bullet}^{E_P};O)\le \frac{1}{6}<\frac{3}{4}$ or 
$S(W_{\bullet,\bullet}^{E_P};O)\le \frac{11}{15}<\frac{3}{4}$.
We get $\delta_P(S)=\frac{4}{3}$ for $P=E_2\cap E_3$.
\par{\bf Step 2.} Suppose $P\in E_2\cup E_3$. Without loss of generality we can assume that $P\in E_2$ since the proof is similar in other cases. 
If  we contract the curve $C$  the resulting
surface is isomorphic to a weak del Pezzo surface of degree $2$  with at most Du Val singularities. Thus, there exist $(-1)$-curves and $(-2)$-curves  which form one of the following dual graphs:
 \begin{figure}[h!]
    \centering
\hspace*{-0.5cm}\includegraphics[width=17cm]{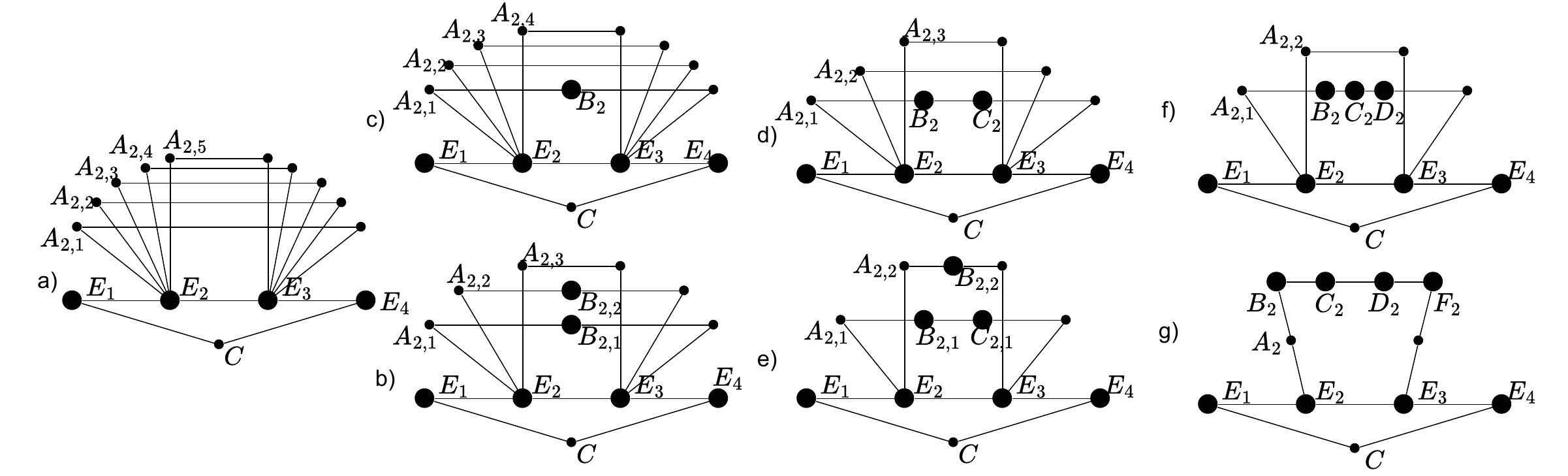}
    \caption{Dual graph: $(-K_S)^2=1$, $\DA_4$ singularity, $\delta_P(S)=\frac{15}{11}$}
\end{figure}
\par Then $\tau(E_2)=\frac{6}{5}$ and the  Zariski Decomposition of the divisor $-K_S-vE_2$ is:
   { 
 {\allowdisplaybreaks\hspace*{-1cm}\begin{align*}
\hspace*{-0.9cm}&{\text{\bf a). }} & P(v)=\begin{cases}
-K_S-vE_2-\frac{v}{6} (3E_1+4E_3+2E_4)\text{ if }v\in[0,1],\\
-K_S-vE_2-\frac{v}{6} (3E_1+4E_3+2E_4)-(v-1)(A_{2,1}+A_{2,2}+A_{2,3}+A_{2,4}+A_{2,5})\text{ if }v\in\big[1,\frac{6}{5}\big].
\end{cases} \\
\hspace*{-0.9cm}&&N(v)=\begin{cases}
\frac{v}{6} (3E_1+4E_3+2E_4)\text{ if }v\in[0,1],\\
\frac{v}{6} (3E_1+4E_3+2E_4)+(v-1)(A_{2,1}+A_{2,2}+A_{2,3}+A_{2,4}+A_{2,5})\text{ if }v\in\big[1,\frac{6}{5}\big].
\end{cases}\\
\hspace*{-0.9cm}&{\text{\bf b). }} & P(v)=\begin{cases}
-K_S-vE_2-\frac{v}{6} (3E_1+4E_3+2E_4)\text{ if }v\in[0,1],\\
-K_S-vE_2-\frac{v}{6} (3E_1+4E_3+2E_4)-(v-1)(2A_{2,1}+B_2+A_{2,2}+A_{2,3}+A_{2,4})\text{ if }v\in\big[1,\frac{6}{5}\big].
\end{cases} \\
\hspace*{-0.9cm}&&N(v)=\begin{cases}
\frac{v}{6} (3E_1+4E_3+2E_4)\text{ if }v\in[0,1],\\
\frac{v}{6} (3E_1+4E_3+2E_4)+(v-1)(2A_{2,1}+B_2+A_{2,2}+A_{2,3}+A_{2,4})\text{ if }v\in\big[1,\frac{6}{5}\big].
\end{cases}\\
\hspace*{-0.9cm}&{\text{\bf c). }} & P(v)=\begin{cases}
-K_S-vE_2-\frac{v}{6} (3E_1+4E_3+2E_4)\text{ if }v\in[0,1],\\
-K_S-vE_2-\frac{v}{6} (3E_1+4E_3+2E_4)-(v-1)(2A_{2,1}+B_{2,1}+2A_{2,2}+B_{2,2}+A_{2,3})\text{ if }v\in\big[1,\frac{6}{5}\big].
\end{cases} \\
\hspace*{-0.9cm}&&N(v)=\begin{cases}
\frac{v}{6} (3E_1+4E_3+2E_4)\text{ if }v\in[0,1],\\
\frac{v}{6} (3E_1+4E_3+2E_4)+(v-1)(2A_{2,1}+B_{2,1}+2A_{2,2}+B_{2,2}+A_{2,3})\text{ if }v\in\big[1,\frac{6}{5}\big].
\end{cases}
\\
\hspace*{-0.9cm}&{\text{\bf d). }} & P(v)=\begin{cases}
-K_S-vE_2-\frac{v}{6} (3E_1+4E_3+2E_4)\text{ if }v\in[0,1],\\
-K_S-vE_2-\frac{v}{6} (3E_1+4E_3+2E_4)-(v-1)(3A_{2,1}+2B_{2}+C_2+2A_{2,2}+A_{2,3})\text{ if }v\in\big[1,\frac{6}{5}\big].
\end{cases} \\
\hspace*{-0.9cm}&&N(v)=\begin{cases}
\frac{v}{6} (3E_1+4E_3+2E_4)\text{ if }v\in[0,1],\\
\frac{v}{6} (3E_1+4E_3+2E_4)+(v-1)(3A_{2,1}+2B_{2}+C_2+A_{2,2}+A_{2,3})\text{ if }v\in\big[1,\frac{6}{5}\big].
\end{cases}\\
\hspace*{-0.9cm}&{\text{\bf e). }} & P(v)=\begin{cases}
-K_S-vE_2-\frac{v}{6} (3E_1+4E_3+2E_4)\text{ if }v\in[0,1],\\
-K_S-vE_2-\frac{v}{6} (3E_1+4E_3+2E_4)-(v-1)(3A_{2,1}+2B_{2,1}+C_{2,1}+2A_{2,2}+B_{2,2})\text{ if }v\in\big[1,\frac{6}{5}\big].
\end{cases} \\
\hspace*{-0.9cm}&&N(v)=\begin{cases}
\frac{v}{6} (3E_1+4E_3+2E_4)\text{ if }v\in[0,1],\\
\frac{v}{6} (3E_1+4E_3+2E_4)+(v-1)(3A_{2,1}+2B_{2,1}+C_{2,1}+2A_{2,2}+B_{2,2})\text{ if }v\in\big[1,\frac{6}{5}\big],\\
\end{cases}\\
\hspace*{-0.9cm}&{\text{\bf f). }} & P(v)=\begin{cases}
-K_S-vE_2-\frac{v}{6} (3E_1+4E_3+2E_4)\text{ if }v\in[0,1],\\
-K_S-vE_2-\frac{v}{6} (3E_1+4E_3+2E_4)-(v-1)(4A_{2,1}+3B_{2}+2C_{2}+D_2+A_{2,2})\text{ if }v\in\big[1,\frac{6}{5}\big].
\end{cases} \\
\hspace*{-0.9cm}&&N(v)=\begin{cases}
\frac{v}{6} (3E_1+4E_3+2E_4)\text{ if }v\in[0,1],\\
\frac{v}{6} (3E_1+4E_3+2E_4)+(v-1)(4A_{2,1}+3B_{2}+2C_{2}+D_2+A_{2,2})\text{ if }v\in\big[1,\frac{6}{5}\big].
\end{cases}\\
\hspace*{-0.9cm}&{\text{\bf g). }} & P(v)=\begin{cases}
-K_S-vE_2-\frac{v}{6} (3E_1+4E_3+2E_4)\text{ if }v\in[0,1],\\
-K_S-vE_2-\frac{v}{6} (3E_1+4E_3+2E_4)-(v-1)(5A_{2}+4B_{2}+3C_{2}+2D_2+F_2)\text{ if }v\in\big[1,\frac{6}{5}\big].
\end{cases} \\
\hspace*{-0.9cm}&&N(v)=\begin{cases}
\frac{v}{6} (3E_1+4E_3+2E_4)\text{ if }v\in[0,1],\\
\frac{v}{6} (3E_1+4E_3+2E_4)+(v-1)(5A_{2}+4B_{2}+3C_{2}+2D_2+F_2)\text{ if }v\in\big[1,\frac{6}{5}\big].
\end{cases}
\end{align*}}}
 The Zariski Decomposition in part a). follows from 
 $$-K_S-vE_2\sim_{\DR} \Big(\frac{6}{5}-v\Big)E_2+\frac{1}{5}\Big(3E_1 + 4E_3 + 2E_4+A_{2,1}+A_{2,2}+A_{2,3}+A_{2,4}+A_{2,5} \Big).$$ 
 A similar statement holds in other parts. 
Moreover, 
$$(P(v))^2=\begin{cases}
1-\frac{5v^2}{6}  \text{ if }v\in[0,1],\\
\frac{(6-5v)^2}{6}  \text{ if }v\in\big [1, \frac{6}{5}\big ].
\end{cases}
P(v)\cdot E_2=\begin{cases}
\frac{5v}{6}  \text{ if }v\in[0,1],\\
3(1-v)  \text{ if }v\in\big [1, \frac{6}{5}\big ].
\end{cases}$$
We have $S_{S} (E_2)=\frac{11}{15}$. Thus, $\delta_P(S)\le \frac{15}{11}$ for $P\in E_2\backslash E_3$. Moreover, if $P\in E_2\cap E_1$ or if $P\in E_2\backslash E_1$ for such points we have 
$$h(v)\le\begin{cases}
\frac{55v^2}{72} \text{ if }v\in [0,1],\\
\frac{5(5v - 6)(19v - 30)}{72} \text{ if }v\in \big [1, \frac{6}{5}\big ].
\end{cases}
\text{ or }
h(v)\le\begin{cases}
\frac{25v^2}{72} \text{ if }v\in [0,1],\\
\frac{25(5v - 6)(6-7v)}{72} \text{ if }v\in \big [1, \frac{6}{5}\big ].
\end{cases}
$$
Thus,
$S(W_{\bullet,\bullet}^{E_2};P)\le \frac{29}{45}<\frac{11}{15}$ or
$S(W_{\bullet,\bullet}^{E_2};P)\le\frac{1}{3}<\frac{11}{15}$.
We get $\delta_P(S)=\frac{15}{11}$ for $P\in (E_1\cup E_2)\backslash (E_1\cap E_2)$.
\par{\bf Step 3.} Suppose $P\in E_1\cup E_4$. Without loss of generality we can assume that $P\in E_1$ since the proof is similar in other cases.  Then $\tau(E_1)=1$ and the  Zariski decomposition of the divisor $-K_S-vE_1\sim C+(1-v)E_1+E_2+E_3+E_4$ is given by:
 {\allowdisplaybreaks\begin{align*}
&&P(v)=\begin{cases}
-K_S-vE_1-\frac{v}{4} (3E_2+2E_3+E_4) \text{ if }v\in\big [0,\frac{4}{5}\big ],\\
-K_S-vE_1-(2v-1)E_2-(3v-2)E_3-(4v-3)E_4-(5v-4)C \text{ if } v\in\big [\frac{4}{5}, 1\big ].
\end{cases}\\&&
N(v)=
\begin{cases}
\frac{v}{4} (3E_2+2E_3+E_4) \text{ if }v\in\big [0,\frac{4}{5}\big ],\\
(2v-1)E_2+(3v-2)E_3+(4v-3)E_4+(5v-4)C \text{ if }v\in\big [\frac{4}{5},1\big ].
\end{cases}
\end{align*}}
Moreover, 
$$(P(v))^2=\begin{cases}
1-\frac{5v^2}{4}  \text{ if }v\in\big [0,\frac{4}{5}\big ],\\
5(v - 1)^2  \text{ if }v\in\big [\frac{4}{5},1\big ].
\end{cases}
P(v)\cdot E_1=\begin{cases}
\frac{5v}{4}  \text{ if }v\in\big [0,\frac{4}{5}\big ],\\
5(1-v)  \text{ if }v\in\big [\frac{4}{5},1\big ].
\end{cases}$$
We have
$S_{S} (E_1)=\frac{3}{5}$
Thus, $\delta_P(S)\le \frac{5}{3}$ for $P\in E_1\backslash E_2$. Moreover,  for such points we have 
$$h(v)\le\begin{cases}
\frac{25v^2}{32} \text{ if }v\in \big [0, \frac{4}{5}\big ],\\
\frac{5(1 - v )(5v - 3)}{2} \text{ if }v\in \big [\frac{4}{5},1\big ].
\end{cases}$$
Thus,
$S(W_{\bullet,\bullet}^{E_1};P)\le \frac{2}{5}<\frac{3}{5}$.
We get $\delta_P(S)=\frac{5}{3}$ for $P\in (E_1\cup E_4)\backslash (E_2\cup E_3)$. 
Thus, $\delta_{\mathcal{P}} (X)=\frac{4}{3}$.
\end{proof}
\subsubsection{$\DA_5$ singularity  on Du Val Del Pezzo surfaces of degree $1$}\label{dP1-A5}
 \begin{lemma} Let $X$ be a singular del Pezzo surface of degree $1$ with an $\DA_5$ singularity at point $\mathcal{P}$.  Let $\mathcal{C}$ be a~curve in the~pencil $|-K_X|$ that contains~$\mathcal{P}$.  Then $\delta_{\mathcal{P}} (X)=\frac{6}{5}$.
 \end{lemma}
 \begin{proof}
 Let $S$ be the minimal resolution of singularities.  Then $S$ is a weak del Pezzo surface of degree $1$. Suppose $C$ is a strict transform of $\mathcal{C}$ on $S$ and $E_1$, $E_2$, $E_3$, $E_4$ and $E_5$ are the exceptional divisors with the intersection:
 \begin{figure}[h!]
    \centering
\includegraphics[width=7cm]{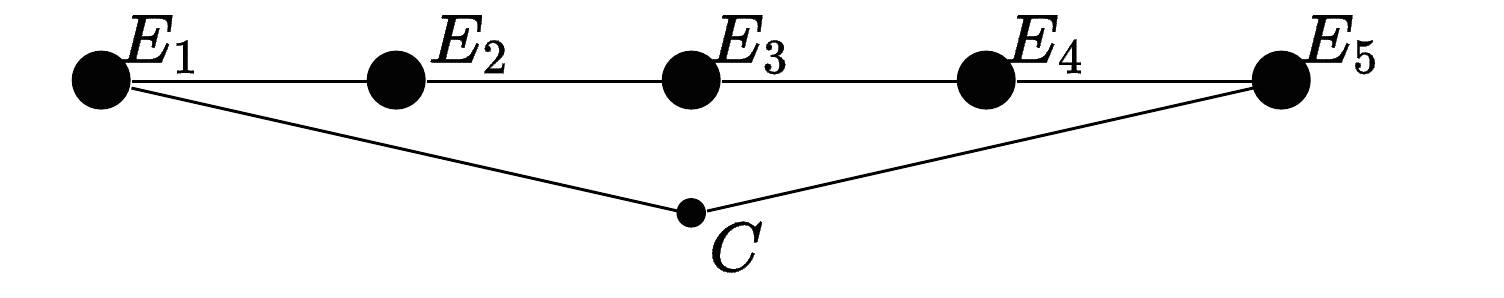}
    \caption{Dual graph: $(-K_S)^2=1$, $\DA_5$ singularity}
\end{figure}
\par We have $-K_S\sim C+E_1+E_2+E_3+E_4+E_5$. Let $P$ be a point on $S$.\\
{\bf Step 1.} Suppose $P\in  E_3$.  If  we contract the curve $C$  the resulting
surface is isomorphic to a weak del Pezzo surface of degree $2$  with at most Du Val singularities.  Thus, there exist $(-1)$-curves and $(-2)$-curves  which form one of the following dual graphs:
\begin{figure}[h!]
    \centering
\includegraphics[width=12cm]{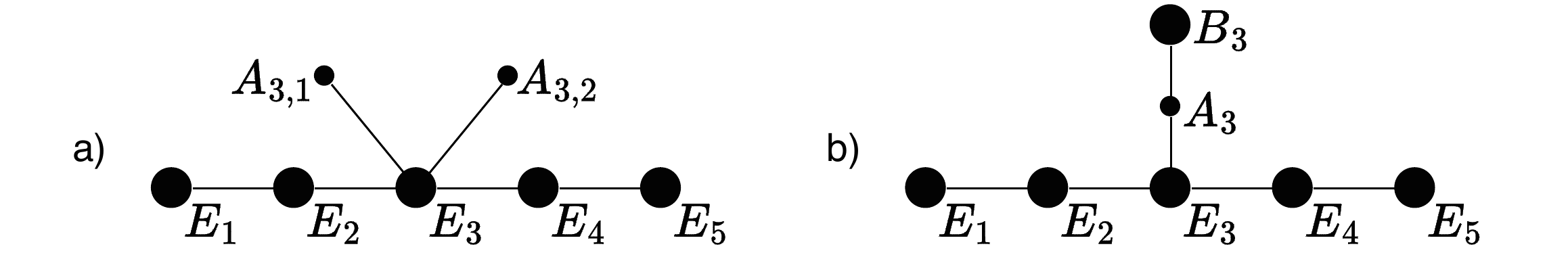}
    \caption{Dual graph: $(-K_S)^2=1$, $\DA_5$ singularity, $\delta_P(S)=\frac{6}{5}$}
\end{figure}
\par   Then $\tau(E_3)=\frac{3}{2}$ and the  Zariski Decomposition of the divisor $-K_S-vE_3$ is:
   { 
 {\allowdisplaybreaks\begin{align*}
&{\text{\bf a). }} & P(v)=\begin{cases}
-K_S-vE_3-\frac{v}{3} (E_1+2E_2+2E_4+E_5)\text{ if }v\in[0,1],\\
-K_S-vE_3-\frac{v}{3} (E_1+2E_2+2E_4+E_5)-(v-1)(A_{3,1}+A_{3,2})\text{ if }v\in\big[1,\frac{3}{2}\big].
\end{cases} \\
&&N(v)=\begin{cases}
\frac{v}{3} (E_1+2E_2+2E_4+E_5)\text{ if }v\in[0,1],\\
\frac{v}{3} (E_1+2E_2+2E_4+E_5)+(v-1)(A_{3,1}+A_{3,2})\text{ if }v\in\big[1,\frac{3}{2}\big].
\end{cases}\\
&{\text{\bf b). }} & P(v)=\begin{cases}
-K_S-vE_3-\frac{v}{3} (E_1+2E_2+2E_4+E_5)\text{ if }v\in[0,1],\\
-K_S-vE_3-\frac{v}{3} (E_1+2E_2+2E_4+E_5)-(v-1)(2A_{3}+B_3)\text{ if }v\in\big[1,\frac{3}{2}\big].
\end{cases} \\
&&N(v)=\begin{cases}
\frac{v}{3} (E_1+2E_2+2E_4+E_5)\text{ if }v\in[0,1],\\
\frac{v}{3} (E_1+2E_2+2E_4+E_5)+(v-1)(2A_{3}+B_3)\text{ if }v\in\big[1,\frac{3}{2}\big].
\end{cases}
\end{align*}}}
 The Zariski Decomposition in part a). follows from 
 $$-K_S-vE_3\sim_{\DR} \Big(\frac{3}{2}-v\Big)E_3+\frac{1}{2}\Big(E_1+2E_2+2E_4+E_5+A_{3,1}+A_{3,2}\Big).$$ 
 A similar statement holds in other parts. 
Moreover, 
$$(P(v))^2=\begin{cases}
1-\frac{2v^2}{3}  \text{ if }v\in[0,1],\\
\frac{(3-2v)^2}{3}  \text{ if }v\in\big [1, \frac{3}{2}\big ].
\end{cases}
P(v)\cdot E_3=\begin{cases}
\frac{2v}{3}  \text{ if }v\in[0,1],\\
2(1-\frac{2v}{3})  \text{ if }v\in\big [1, \frac{3}{2}\big ].
\end{cases}$$
We have
$S_{S} (E_3)=\frac{5}{6}$. Thus, $\delta_P(S)\le \frac{6}{5}$ for $P\in E_3$. Moreover, if $P\in E_3\cap (E_2\cup E_4)$ or if $P\in E_3\backslash (E_2\cup E_4)$  we have 
$$h(v)\le\begin{cases}
\frac{2v^2}{3} \text{ if }v\in [0,1],\\
\frac{2(3-2v)}{3} \text{ if }v\in\big [1, \frac{3}{2}\big ].
\end{cases}
\text{ or }
h(v)\le\begin{cases}
\frac{2v^2}{9} \text{ if }v\in [0,1],\\
\frac{2(3 -2v)(4v - 3)}{9} \text{ if }v\in\big [1, \frac{3}{2}\big ].
\end{cases}
$$
Thus,
$S(W_{\bullet,\bullet}^{E_3};P)\le \frac{7}{9}<\frac{5}{6}$
or
$S(W_{\bullet,\bullet}^{E_3};P)\le  \frac{1}{3}<\frac{5}{6}$.
We get $\delta_P(S)=\frac{6}{5}$ for $P\in E_3$.
\par{\bf Step 2.} Suppose $P\in  E_2\cup E_4$. Without loss of generality we can assume that
$P \in E_2$ since the proof is similar in other cases. There exist $(-1)$-curves and $(-2)$-curves  which form one of the following dual graphs:
\begin{figure}[h!]
    \centering
\includegraphics[width=16cm]{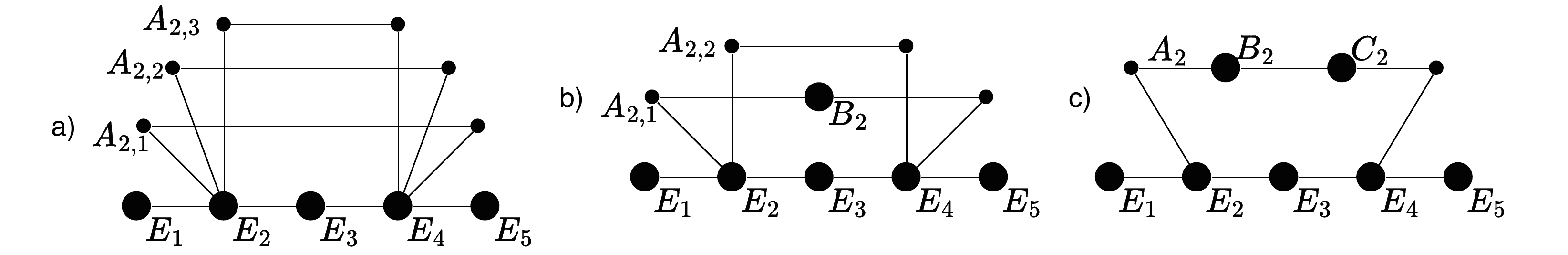}
    \caption{Dual graph: $(-K_S)^2=1$, $\DA_5$ singularity, $\delta_P(S)=\frac{9}{7}$}
\end{figure}
\par Then $\tau(E_2)=\frac{4}{3}$ and the  Zariski Decomposition of the divisor $-K_S-vE_2$ is:
   { 
 {\allowdisplaybreaks\begin{align*}
&{\text{\bf a). }} & P(v)=\begin{cases}
-K_S-vE_2-\frac{v}{4} (2E_1+3E_3+2E_4+E_5)\text{ if }v\in[0,1],\\
-K_S-vE_2-\frac{v}{4} (2E_1+3E_3+2E_4+E_5)-(v-1)(A_{2,1}+A_{2,2}+A_{2,3})\text{ if }v\in\big[1,\frac{4}{3}\big].
\end{cases} \\
&&N(v)=\begin{cases}
\frac{v}{4} (2E_1+3E_3+2E_4+E_5)\text{ if }v\in[0,1],\\
\frac{v}{4} (2E_1+3E_3+2E_4+E_5)+(v-1)(A_{2,1}+A_{2,2}+A_{2,3})\text{ if }v\in\big[1,\frac{4}{3}\big].
\end{cases}\\
&{\text{\bf b). }} & P(v)=\begin{cases}
-K_S-vE_2-\frac{v}{4} (2E_1+3E_3+2E_4+E_5)\text{ if }v\in[0,1],\\
-K_S-vE_2-\frac{v}{4} (2E_1+3E_3+2E_4+E_5)-(v-1)(2A_{2,1}+B_{2,1}+A_{2,2})\text{ if }v\in\big[1,\frac{4}{3}\big].
\end{cases} \\
&&N(v)=\begin{cases}
\frac{v}{4} (2E_1+3E_3+2E_4+E_5)\text{ if }v\in[0,1],\\
\frac{v}{4} (2E_1+3E_3+2E_4+E_5)+(v-1)(2A_{2,1}+B_{2,1}+A_{2,2})\text{ if }v\in\big[1,\frac{4}{3}\big].
\end{cases}\\
&{\text{\bf c). }} & P(v)=\begin{cases}
-K_S-vE_2-\frac{v}{4} (2E_1+3E_3+2E_4+E_5)\text{ if }v\in[0,1],\\
-K_S-vE_2-\frac{v}{4} (2E_1+3E_3+2E_4+E_5)-(v-1)(3A_{2}+B_{2}+C_2)\text{ if }v\in\big[1,\frac{4}{3}\big].
\end{cases} \\
&&N(v)=\begin{cases}
\frac{v}{4} (2E_1+3E_3+2E_4+E_5)\text{ if }v\in[0,1],\\
\frac{v}{4} (2E_1+3E_3+2E_4+E_5)+(v-1)(3A_{2}+B_{2}+C_2)\text{ if }v\in\big[1,\frac{4}{3}\big].
\end{cases}
\end{align*}}}
The Zariski Decomposition in part a). follows from 
 $$-K_S-vE_2\sim_{\DR} \Big(\frac{4}{3}-v\Big)E_2+\frac{1}{3}\Big(2E_1+3E_3+2E_4+E_5+A_{2,1}+A_{2,2}+A_{2,3}\Big).$$ 
 A similar statement holds in other parts. Moreover, 
$$(P(v))^2=\begin{cases}
1-\frac{3v^2}{4}  \text{ if }v\in[0,1],\\
\frac{(4-3v)^2}{4}  \text{ if }v\in\big [1, \frac{4}{3}\big ].
\end{cases}
P(v)\cdot E_2=\begin{cases}
\frac{3v}{4}  \text{ if }v\in[0,1],\\
3(1-\frac{3v}{4})  \text{ if }v\in\big [1, \frac{4}{3}\big ].
\end{cases}$$
We have
$S_{S} (E_2)=\frac{7}{9}$. Thus, $\delta_P(S)\le \frac{9}{7}$ for $P\in E_2$. Moreover, if $P\in E_2\cap E_1$ or if $P\in E_2\backslash (E_1\cup E_3)$  we have 
$$h(v)\le\begin{cases}
\frac{21v^2}{32} \text{ if }v\in [0,1],\\
\frac{3(3v - 4)(5v - 12)}{32} \text{ if }v\in \big [1, \frac{4}{3}\big ].
\end{cases}
\text{ or }
h(v)\le\begin{cases}
\frac{9v^2}{32} \text{ if }v\in [0,1],\\
\frac{9(3v - 4)(4 -5v )}{32} \text{ if }v\in \big [1, \frac{4}{3}\big ].
\end{cases}
$$
Thus,
$S(W_{\bullet,\bullet}^{E_2};P)\le \frac{23}{36}<\frac{7}{9}$
or
$S(W_{\bullet,\bullet}^{E_2};P)\le  \frac{1}{3}<\frac{7}{9}$.
We get $\delta_P(S)=\frac{9}{7}$ for $P\in (E_2\cup E_4)\backslash E_3$.
\par{\bf Step 3.} Suppose $P\in E_1\cup E_5$. Without loss of generality we can assume that $P\in E_1$ since the proof is similar in other cases.  Then $\tau(E_1)=1$ and the  Zariski decomposition of the divisor $-K_S-vE_1\sim  C+(1-v)E_1+E_2+E_3+E_4+E_5$ is given by:
 {\allowdisplaybreaks\begin{align*}
\hspace*{-0.5cm}&&P(v)=\begin{cases}
-K_S-vE_1-\frac{v}{5} (4E_2+3E_3+2E_4+E_5) \text{ if }v\in\big [0,\frac{5}{6}\big ],\\
-K_S-vE_1-(2v-1)E_2-(3v-2)E_3-(4v-3)E_4-(5v-4)E_5-(6v-5)C \text{ if } v\in\big [\frac{5}{6}, 1\big ].
\end{cases}\\\hspace*{-0.5cm}&&
N(v)=
\begin{cases}
\frac{v}{5} (4E_2+3E_3+2E_4+E_5) \text{ if }v\in\big [0,\frac{5}{6}\big ],\\
(2v-1)E_2+(3v-2)E_3+(4v-3)E_4+(5v-4)E_5+(6v-5)C \text{ if }v\in\big [\frac{5}{6},1\big ].
\end{cases}
\end{align*}}
Moreover, 
$$(P(v))^2=\begin{cases}
1-\frac{6v^2}{5}  \text{ if }v\in\big [0,\frac{5}{6}\big ],\\
6(v - 1)^2  \text{ if }v\in\big [\frac{5}{6},1\big ].
\end{cases}
P(v)\cdot E_1=\begin{cases}
\frac{6v}{5}  \text{ if }v\in\big [0,\frac{5}{6}\big ],\\
6(1-v)  \text{ if }v\in\big [\frac{5}{6},1\big ].
\end{cases}$$
We have
$S_{S} (E_1)=\frac{11}{18}$. Thus, $\delta_P(S)\le \frac{18}{11}$ for $P\in E_1\backslash E_2$. Moreover,  for such points we have 
$$h(v)\le\begin{cases}
\frac{18v^2}{25} \text{ if }v\in \big [0, \frac{5}{6}\big ],\\
6(1- v)(3v-2) \text{ if } v\in\big [\frac{5}{6}, 1\big ].
\end{cases}$$
Thus,
$S(W_{\bullet,\bullet}^{E_1};P)\le \frac{7}{18}<\frac{11}{18}$.
We get $\delta_P(S)=\frac{18}{11}$ for $P\in (E_1\cup E_5)\backslash (E_2\cup E_4)$. \\ Thus, $\delta_{\mathcal{P}} (X)=\frac{6}{5}$.
 \end{proof}
 \subsubsection{$\DA_6$ singularity  on Du Val Del Pezzo surfaces of degree $1$}
 \begin{lemma} Let $X$ be a singular del Pezzo surface of degree $1$ with an $\DA_6$ singularity at point $\mathcal{P}$.  Let $\mathcal{C}$ be a~curve in the~pencil $|-K_X|$ that contains~$\mathcal{P}$. Then $\delta_{\mathcal{P}} (X)=\frac{9}{8}$.
 \end{lemma}
  \begin{proof}
  Let $S$ be the minimal resolution of singularities.  Then $S$ is a weak del Pezzo surface of degree $1$. Suppose $C$ is a strict transform of $\mathcal{C}$ on $S$ and $E_1$, $E_2$, $E_3$, $E_4$, $E_6$ and $E_7$ are the exceptional divisors with the intersection:
  \begin{figure}[h!]
    \centering
\includegraphics[width=10cm]{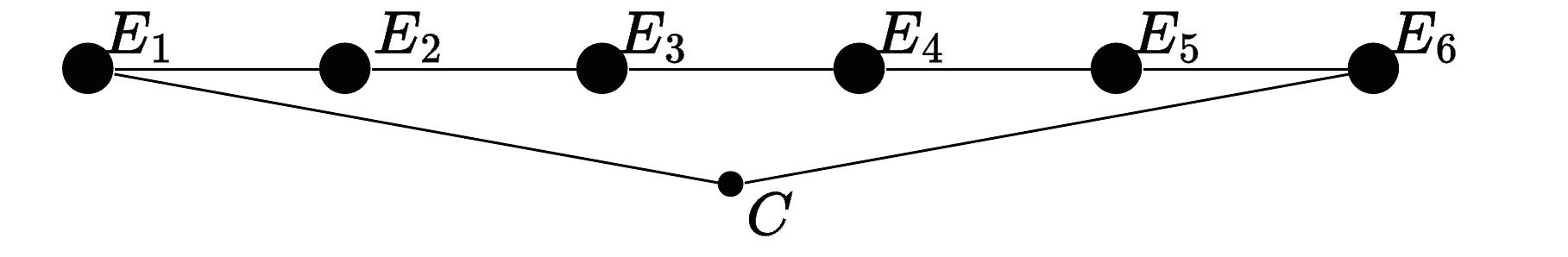}
    \caption{Dual graph: $(-K_S)^2=1$, $\DA_6$ singularity}
\end{figure}
\par  We have $-K_S\sim C+E_1+E_2+E_3+E_4+E_5+E_6$. Let $P$ be a point on $S$.
\par{\bf Step 1.} Suppose $P\in  E_3\cup E_4$. Without loss of generality we can assume that $P\in E_3$ since the
proof is similar in other cases. There exist $(-1)$-curves and $(-2)$-curves  which form the following dual graph:
\begin{figure}[h!]
    \centering
\includegraphics[width=7cm]{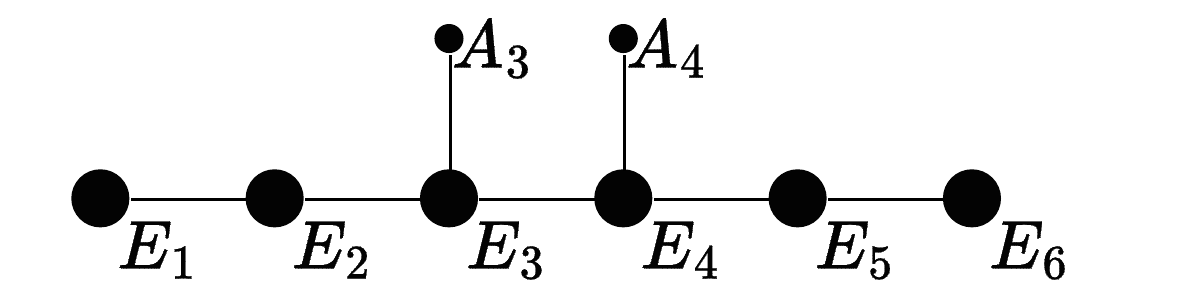}
    \caption{Dual graph: $(-K_S)^2=1$, $\DA_6$ singularity, $\delta_P(S)=\frac{9}{8}$}
\end{figure}
\par  Then $\tau(E_3)=\frac{3}{2}$ and the  Zariski Decomposition of the divisor $-K_S-vE_3$ is:
 {\allowdisplaybreaks\begin{align*}
\hspace*{-0.5cm}&&P(v)=\begin{cases}
-K_S-vE_3-\frac{v}{12} (4E_1+8E_2+9E_4+6E_5+3E_6)\text{ if }v\in[0,1],\\
-K_S-vE_3-\frac{v}{12} (4E_1+8E_2+9E_4+6E_5+3E_6)-(v-1)A_3\text{ if }v\in\big[1,\frac{4}{3}\big],\\
-K_S-vE_3-\frac{v}{3} (E_1+2E_2)-(v-1)(3E_4+2E_5+E_6+A_3)-(3v-4)A_4\text{ if }v\in\big[\frac{4}{3},\frac{3}{2}\big].
\end{cases}\\
\hspace*{-0.5cm}&&N(v)=\begin{cases}
\frac{v}{12} (4E_1+8E_2+9E_4+6E_5+3E_6)\text{ if }v\in[0,1],\\
\frac{v}{12} (4E_1+8E_2+9E_4+6E_5+3E_6)+(v-1)A_3\text{ if }v\in\big[1,\frac{4}{3}\big],\\
\frac{v}{3} (E_1+2E_2)+(v-1)(3E_4+2E_5+E_6+A_3)+(3v-4)A_4\text{ if }v\in\big[\frac{4}{3},\frac{3}{2}\big].
\end{cases}
\end{align*}}
\noindent The Zariski Decomposition in part a). follows from 
 $$-K_S-vE_3\sim_{\DR} \Big(\frac{3}{2}-v\Big)E_3+\frac{1}{2}\Big(E_1+2E_2+3E_4+2E_5+E_6+A_3+A_4\Big).$$ 
Moreover, 
$$(P(v))^2=\begin{cases}
1-\frac{7v^2}{12}  \text{ if }v\in[0,1],\\
2 - 2v + \frac{5v^2}{12}\text{ if }v\in\big[1,\frac{4}{3}\big],\\
\frac{2(3-2v)^2}{3}  \text{ if }v\in\big[\frac{4}{3},\frac{3}{2}\big].
\end{cases}
P(v)\cdot E_3=\begin{cases}
\frac{7v}{12}  \text{ if }v\in[0,1],\\
1-\frac{5v}{12}\text{ if }v\in\big[1,\frac{4}{3}\big],\\
4(1-\frac{2v}{3})  \text{ if }v\in\big[\frac{4}{3},\frac{3}{2}\big].
\end{cases}$$
We have
$S_{S} (E_3)=\frac{8}{9}$.
Thus, $\delta_P(S)\le \frac{9}{8}$ for $P\in E_3$. Moreover, if $P\in E_3\cap A_3$ or if $P\in E_3\cap E_2$ or if $P\in E_3\backslash (E_2\cup A_3)$  we have 
{  $$h(v)\le\begin{cases}
\frac{49v^2}{288}  \text{ if }v\in[0,1],\\
\frac{(12 - 5v)(19v - 12)}{288}\text{ if }v\in\big[1,\frac{4}{3}\big],\\
\frac{4(2v - 3)(v-3 )}{9}  \text{ if }v\in\big[\frac{4}{3},\frac{3}{2}\big].
\end{cases}
\text{ or }
h(v)\le\begin{cases}
\frac{161v^2}{288}  \text{ if }v\in[0,1],\\
\frac{(12 - 5v)(11v + 12)}{288}\text{ if }v\in\big[1,\frac{4}{3}\big],\\
\frac{8(2v - 3)(v-3 )}{9}  \text{ if }v\in\big[\frac{4}{3},\frac{3}{2}\big].
\end{cases}$$$$
\text{ or }
h(v)\le\begin{cases}
\frac{175v^2}{288}  \text{ if }v\in[0,1],\\
\frac{(12 - 5v)(13v + 12)}{288}\text{ if }v\in\big[1,\frac{4}{3}\big],\\
\frac{4(2v - 3)(5v-3 )}{9}  \text{ if }v\in\big[\frac{4}{3},\frac{3}{2}\big].
\end{cases}
$$}
\noindent Thus,
$S(W_{\bullet,\bullet}^{E_3};P)\le \frac{8}{27}<\frac{8}{9}$
or
$S(W_{\bullet,\bullet}^{E_3};P)\le \frac{29}{36}<\frac{8}{9}$
or
$S(W_{\bullet,\bullet}^{E_3};P)\le \frac{8}{9}$.
We get $\delta_P(S)=\frac{9}{8}$ for $P\in E_3\cup E_4$.
\par{\bf Step 2.} Suppose $P\in  E_2\cup E_5$. Without loss of generality we can assume that
$P \in E_2$ since the proof is similar in other cases. There exist $(-1)$-curves and $(-2)$-curves  which form one of the following dual graphs:
\begin{figure}[h!]
    \centering
\includegraphics[width=14cm]{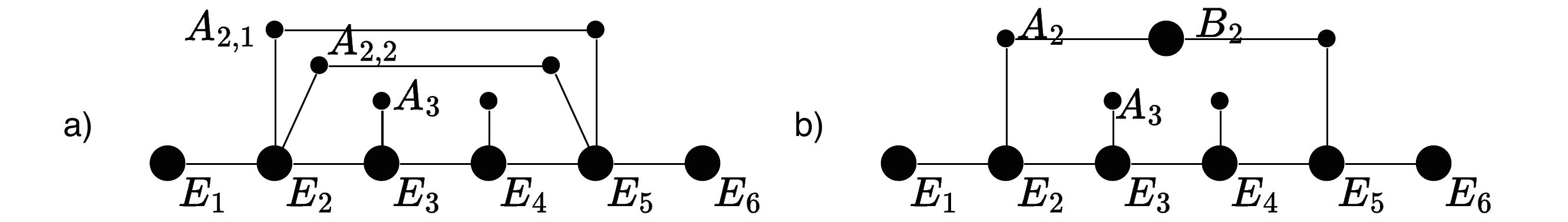}
    \caption{Dual graph: $(-K_S)^2=1$, $\DA_6$ singularity, $\delta_P(S)=\frac{36}{29}$}
\end{figure}
\par     Then $\tau(E_2)=\frac{4}{3}$ and the  Zariski Decomposition of the divisor $-K_S-vE_2$ is:
   { 
 {\allowdisplaybreaks\begin{align*}
\hspace*{-0.9cm}&{\text{\bf a). }} & P(v)=\begin{cases}
-K_S-vE_2-\frac{v}{10} (5E_1+8E_3+6E_4+4E_5+2E_6)\text{ if }v\in[0,1],\\
-K_S-vE_2-\frac{v}{10} (5E_1+8E_3+6E_4+4E_5+2E_6)-(v-1)(A_{2,1}+A_{2,2})\text{ if }v\in\big[1,\frac{5}{4}\big],\\
-K_S-vE_2-\frac{v}{2}E_1-(v-1)(4E_3+3E_4+2E_5+E_6+A_{2,1}+A_{2,2})-(4v-5)A_3\text{ if }v\in\big[\frac{5}{4},\frac{4}{3}\big].
\end{cases} \\
\hspace*{-0.9cm}&&N(v)=\begin{cases}
\frac{v}{10} (5E_1+8E_3+6E_4+4E_5+2E_6)\text{ if }v\in[0,1],\\
\frac{v}{10} (5E_1+8E_3+6E_4+4E_5+2E_6)+(v-1)(A_{2,1}+A_{2,2})\text{ if }v\in\big[1,\frac{5}{4}\big],\\
\frac{v}{2}E_1+(v-1)(4E_3+3E_4+2E_5+E_6+A_{2,1}+A_{2,2})+(4v-5)A_3\text{ if }v\in\big[\frac{5}{4},\frac{4}{3}\big].
\end{cases}\\
\hspace*{-0.9cm}&{\text{\bf b). }} & P(v)=\begin{cases}
-K_S-vE_2-\frac{v}{10} (5E_1+8E_3+6E_4+4E_5+2E_6)\text{ if }v\in[0,1],\\
-K_S-vE_2-\frac{v}{10} (5E_1+8E_3+6E_4+4E_5+2E_6)-(v-1)(2A_{2}+B_2)\text{ if }v\in\big[1,\frac{5}{4}\big],\\
-K_S-vE_2-\frac{v}{2}E_1-(v-1)(4E_3+3E_4+2E_5+E_6+2A_{2}+B_2)-(4v-5)A_3\text{ if }v\in\big[\frac{5}{4},\frac{4}{3}\big].
\end{cases} \\
\hspace*{-0.9cm}&&N(v)=\begin{cases}
\frac{v}{10} (5E_1+8E_3+6E_4+4E_5+2E_6)\text{ if }v\in[0,1],\\
\frac{v}{10} (5E_1+8E_3+6E_4+4E_5+2E_6)+(v-1)(2A_{2}+B_2)\text{ if }v\in\big[1,\frac{5}{4}\big],\\
\frac{v}{2}E_1+(v-1)(4E_3+3E_4+2E_5+E_6+2A_{2}+B_2)+(4v-5)A_3\text{ if }v\in\big[\frac{5}{4},\frac{4}{3}\big].
\end{cases}
\end{align*}}}
The Zariski Decomposition in part a). follows from 
 $$-K_S-vE_2\sim_{\DR} \Big(\frac{4}{3}-v\Big)E_2+\frac{1}{3}\Big(2E_1+4E_3+3E_4+2E_5+E_6+A_{2,1}+A_{2,2}+A_3\Big).$$ 
 A similar statement holds in other parts. Moreover, 
$$(P(v))^2=\begin{cases}
1-\frac{7v^2}{10}  \text{ if }v\in[0,1],\\
3 - 4v + \frac{13v^2}{10}\text{ if }v\in\big[1,\frac{5}{4}\big],\\
\frac{(4-3v)^2}{2}  \text{ if }v\in\big [\frac{5}{4}, \frac{4}{3}\big ].
\end{cases}
P(v)\cdot E_2=\begin{cases}
\frac{7v}{10}  \text{ if }v\in[0,1],\\
2 - \frac{13v}{10}\text{ if }v\in\big[1,\frac{5}{4}\big],\\
3(2-\frac{3v}{2})  \text{ if }v\in\big [\frac{5}{4}, \frac{4}{3}\big ].
\end{cases}$$
We have $S_{S} (E_2)=\frac{29}{36}$. Thus, $\delta_P(S)\le \frac{36}{29}$ for $P\in E_2$. Moreover, if $P\in E_2\cap E_1$ or if $P\in E_2\backslash (E_1\cup E_3)$  we have 
$$h(v)\le\begin{cases}
\frac{119v^2}{200} \text{ if }v\in [0,1],\\
\frac{(13v-20)(3v - 20)}{200}\text{ if }v\in\big[1,\frac{5}{4}\big],\\
\frac{3(3v - 4)(7v - 12)}{8} \text{ if }v\in\big [\frac{5}{4}, \frac{4}{3}\big ].
\end{cases}
\text{ or }
h(v)\le
\begin{cases}
\frac{49v^2}{200} \text{ if }v\in [0,1],\\
\frac{(13v-20)(27v - 20)}{200}\text{ if }v\in\big[1,\frac{5}{4}\big],\\
\frac{3(3v - 4)(v - 4)}{8} \text{ if }v\in\big [\frac{5}{4}, \frac{4}{3}\big ].
\end{cases}
$$
Thus
$S(W_{\bullet,\bullet}^{E_2};P)\le \frac{29}{45}<\frac{29}{36}$
or
$S(W_{\bullet,\bullet}^{E_2};P)\le \frac{23}{72}<\frac{29}{36}$.
We get $\delta_P(S)=\frac{36}{29}$ for $P\in (E_2\cup E_5)\backslash (E_3\cup E_4)$.
\par{\bf Step 3.} Suppose $P\in E_1\cup E_6$. Without loss of generality we can assume that $P\in E_1$ since the proof is similar in other cases.  Then $\tau(E_1)=1$ and the  Zariski decomposition of the divisor $-K_S-vE_1\sim C+(1-v)E_1+E_2+E_3+E_4+E_5+E_6$ is given by:
{ 
 {\small\allowdisplaybreaks\begin{align*}
\hspace*{-0.7cm}&&P(v)=\begin{cases}
-K_S-vE_1-\frac{v}{6} (5E_2+4E_3+3E_4+2E_5+E_6) \text{ if }v\in\big [0,\frac{6}{7}\big ],\\
-K_S-vE_1-(2v-1)E_2-(3v-2)E_3-(4v-3)E_4-(5v-4)E_5-(6v-5)E_6-(7v-6)C \text{ if } v\in\big [\frac{6}{7}, 1\big ].
\end{cases}\\
\hspace*{-0.7cm}&&N(v)=
\begin{cases}
\frac{v}{6} (5E_2+4E_3+3E_4+2E_5+E_6) \text{ if }v\in\big [0,\frac{6}{7}\big ],\\
(2v-1)E_2+(3v-2)E_3+(4v-3)E_4+(5v-4)E_5+(6v-5)E_6+(7v-6)C \text{ if }v\in\big [\frac{6}{7},1\big ].
\end{cases}
\end{align*}}}
Moreover, 
$$(P(v))^2=\begin{cases}
1-\frac{7v^2}{6}  \text{ if }v\in\big [0,\frac{6}{7}\big ],\\
7(v - 1)^2  \text{ if }v\in\big [\frac{6}{7},1\big ].
\end{cases}
P(v)\cdot E_1=\begin{cases}
\frac{7v}{6}  \text{ if }v\in\big [0,\frac{6}{7}\big ],\\
7(1-v)  \text{ if }v\in\big [\frac{6}{7},1\big ].
\end{cases}$$
We have $S_{S} (E_1)=\frac{13}{21}$. Thus, $\delta_P(S)\le \frac{21}{13}$ for $P\in E_1\backslash E_2$. Moreover,  for such points we have 
$$h(v)\le\begin{cases}
\frac{49v^2}{72} \text{ if }v\in \big [0,\frac{6}{7}\big ],\\
\frac{7(1-v)(7v - 5)}{2} \text{ if }v\in \big [\frac{6}{7},1\big ].
\end{cases}$$
Thus
$S(W_{\bullet,\bullet}^{E_1};P)\le \frac{8}{21}<\frac{13}{21}$.
We get $\delta_P(S)=\frac{21}{13}$ for $P\in (E_1\cup E_6)\backslash (E_2\cup E_5)$. 
\\Thus, $\delta_{\mathcal{P}} (X)=\frac{9}{8}$.
 \end{proof}
  \subsubsection{$\DA_7$ singularity (reducible ramification)   on Du Val Del Pezzo surfaces of degree $1$}
 \begin{lemma} Let $X$ be a singular del Pezzo surface of degree $1$ with an $\DA_7$ singularity at point $\mathcal{P}$. $X$ can be realized as the double cover $X\xrightarrow{2:1}\DP(1,1,2)$, which is
ramified along a sextic curve $R\in \DP(1,1,2)$. Suppose $R$ is reducible. Let $\mathcal{C}$ be a~curve in the~pencil $|-K_X|$ that contains~$\mathcal{P}$.  Then $\delta_{\mathcal{P}} (X)=1$.
\end{lemma}
  \begin{proof}
  Let $S$ be the minimal resolution of singularities.  Then $S$ is a weak del Pezzo surface of degree $1$. Suppose $C$ is a strict transform of $\mathcal{C}$ on $S$ and $E_1$, $E_2$, $E_3$, $E_4$, $E_5$, $E_6$ and $E_7$ are the exceptional divisors with the intersection:
  \begin{figure}[h!]
    \centering
\includegraphics[width=10.5cm]{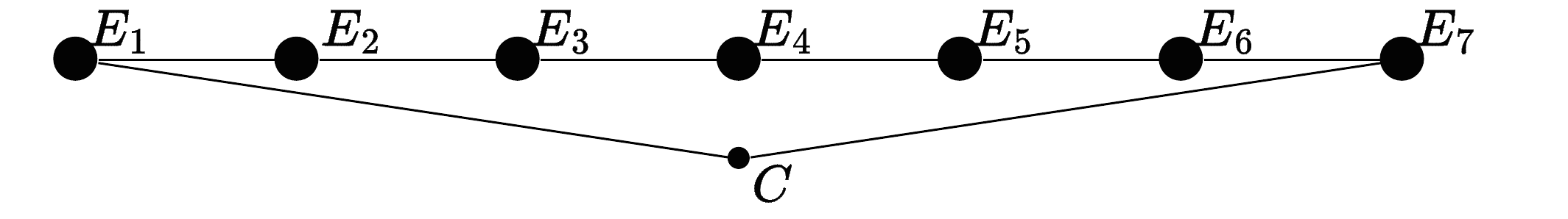}
    \caption{Dual graph: $(-K_S)^2=1$, $\DA_7$ singularity (reducible ramification divisor)}
\end{figure}
\par  We have $-K_S\sim C+E_1+E_2+E_3+E_4+E_5+E_6+E_7$. Let $P$ be a point on $S$. If the ramification divisor $R$ is reducible, then this
implies the existence of a $(-1)$-curve $A_4$ which intersects the fundamental cycle only at  $E_4$ and this intersection is transversal.\\
{\bf Step 1.} Suppose $P\in  E_4$.  There exist $(-1)$-curves and $(-2)$-curves  which form  the following dual graph:
\begin{figure}[h!]
    \centering
\includegraphics[width=8cm]{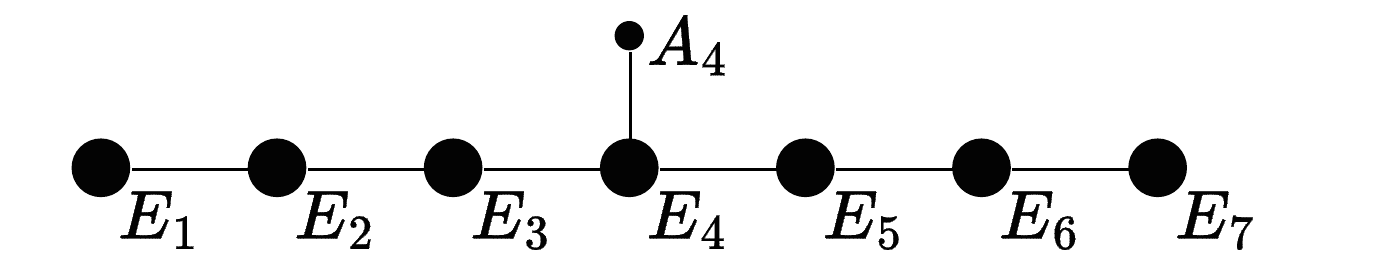}
    \caption{Dual graph: $(-K_S)^2=1$, $\DA_7$ singularity, $\delta_P(S)=1$}
\end{figure}
\par     Then $\tau(E_4)=2$ and the  Zariski Decomposition of the divisor $-K_S-vE_4$ is:
 {\allowdisplaybreaks\begin{align*}
&&P(v)=\begin{cases}
-K_S-vE_4-\frac{v}{4} (E_1+2E_2+3E_3+3E_5+2E_6+E_7)\text{ if }v\in[0,1],\\
-K_S-vE_4-\frac{v}{4} (E_1+2E_2+3E_3+3E_5+2E_6+E_7)-(v-1)A_4\text{ if }v\in[1,2].
\end{cases} \\
&&N(v)=\begin{cases}
\frac{v}{4} (E_1+2E_2+3E_3+3E_5+2E_6+E_7)\text{ if }v\in[0,1],\\
\frac{v}{4} (E_1+2E_2+3E_3+3E_5+2E_6+E_7)+(v-1)A_4\text{ if }v\in[1,2].
\end{cases}
\end{align*}}
The Zariski Decomposition  follows from 
 $$-K_S-vE_4\sim_{\DR} (2-v)E_4+\frac{1}{4}\Big(E_1+2E_2+3E_3+3E_5+2E_6+E_7+4A_4\Big).$$ 
Moreover, 
$$(P(v))^2=\begin{cases}
1-\frac{v^2}{2}  \text{ if }v\in[0,1],\\
\frac{(2-v)^2}{2}  \text{ if }v\in[1,2].
\end{cases}
P(v)\cdot E_4=\begin{cases}
\frac{v}{2}  \text{ if }v\in[0,1],\\
1-\frac{v}{2})  \text{ if }v\in[1,2].
\end{cases}$$
We have $S_{S} (E_4)=1$. Thus, $\delta_P(S)\le 1$ for $P\in E_4$. Moreover, if $P\in E_4\cap (E_3\cup E_5)$ or if $P\in E_4\backslash (E_3\cup E_5)$   we have 
$$h(v)\le\begin{cases}
\frac{v^2}{2} \text{ if }v\in [0,1],\\
\frac{(2 -v )(v+1)}{4} \text{ if }v\in [1,2].
\end{cases}
\text{ or }
h(v)\le\begin{cases}
\frac{v^2}{8} \text{ if }v\in [0,1],\\
\frac{(2 -v )(3v - 2)}{8} \text{ if }v\in [1,2].
\end{cases}
$$
Thus $S(W_{\bullet,\bullet}^{E_4};P)\le \frac{11}{12}< 1$
or $S(W_{\bullet,\bullet}^{E_4};P)\le  \frac{1}{3}< 1$.
We get $\delta_P(S)=1$ for $P\in E_4$.
\par{\bf Step 2.} Suppose $P\in  E_3\cup E_5$. Without loss of generality we can assume that $P\in E_3$ since the
proof is similar in other cases.  There exist $(-1)$-curves and $(-2)$-curves  which form  the following dual graph:
\begin{figure}[h!]
    \centering
\includegraphics[width=8cm]{chapter8/pictures/dP1_A7_E4_reducible.pdf}
    \caption{Dual graph: $(-K_S)^2=1$, $\DA_7$ singularity, $\delta_P(S)=\frac{12}{11}$}
\end{figure}
\par  Then $\tau(E_3)=\frac{3}{2}$ and the  Zariski Decomposition of the divisor $-K_S-vE_3$ is:
 {\allowdisplaybreaks\begin{align*}
\hspace*{-0.5cm}&&P(v)=\begin{cases}
-K_S-vE_3-\frac{v}{15} (5E_1+10E_2+12E_4+9E_5+6E_6+3E_7)\text{ if }v\in\big[0,\frac{5}{4}\big],\\
-K_S-vE_3-\frac{v}{3} (E_1+2E_2)-(v-1)(4E_4+3E_5+2E_6+E_7)-(4v-5)A_4\text{ if }v\in\big[\frac{5}{4},\frac{3}{2}\big].
\end{cases} \\
\hspace*{-0.5cm}&&N(v)=\begin{cases}
\frac{v}{15} (5E_1+10E_2+12E_4+9E_5+6E_6+3E_7)\text{ if }v\in\big[0,\frac{5}{4}\big],\\
\frac{v}{3} (E_1+2E_2)+(v-1)(4E_4+3E_5+2E_6+E_7)+(4v-5)A_4\text{ if }v\in\big[\frac{5}{4},\frac{3}{2}\big].
\end{cases}
\end{align*}} 
The Zariski Decomposition  follows from 
 $$-K_S-vE_3\sim_{\DR} \Big(\frac{3}{2}-v\Big)E_3+\frac{1}{2}\Big(E_1+2E_2+4E_4+3E_5+2E_6+E_7+2A_4\Big).$$ 
Moreover, 
$$(P(v))^2=\begin{cases}
1-\frac{8v^2}{15}  \text{ if }v\in\big[0,\frac{5}{4}\big],\\
\frac{2(3-2v)^2}{3}  \text{ if }v\in\big[\frac{5}{4},\frac{3}{2}\big].
\end{cases}
P(v)\cdot E_3=\begin{cases}
\frac{8v}{15}  \text{ if }v\in \big[0,\frac{5}{4}\big],\\
4(1-\frac{2v}{3})  \text{ if }v\in \big[\frac{5}{4},\frac{3}{2}\big].
\end{cases}$$
We have $S_{S} (E_3)=\frac{11}{12}$. Thus, $\delta_P(S)\le \frac{12}{11}$ for $P\in E_3$. Moreover, if $P\in E_3\backslash E_4$   we have 
$$h(v)\le\begin{cases}
\frac{112v^2}{225} \text{ if }v\in \big [0,\frac{5}{4}\big ],\\
\frac{8(2v - 3)(v - 3)}{9} \text{ if }v\in \big [\frac{5}{4},\frac{3}{2}\big ].
\end{cases}
$$
Thus,
$S(W_{\bullet,\bullet}^{E_3};P)\le \frac{5}{6}< \frac{11}{12}$.
We get $\delta_P(S)=\frac{12}{11}$ for $P\in (E_3\cup E_5)\backslash E_4$.
\par{\bf Step 3.} Suppose $P\in  E_2\cup E_6$. Without loss of generality we can assume that $P\in E_2$ since the proof
is similar in other cases.   There exist $(-1)$-curves and $(-2)$-curves  which form one of the following dual graphs:
\begin{figure}[h!]
    \centering
\includegraphics[width=15cm]{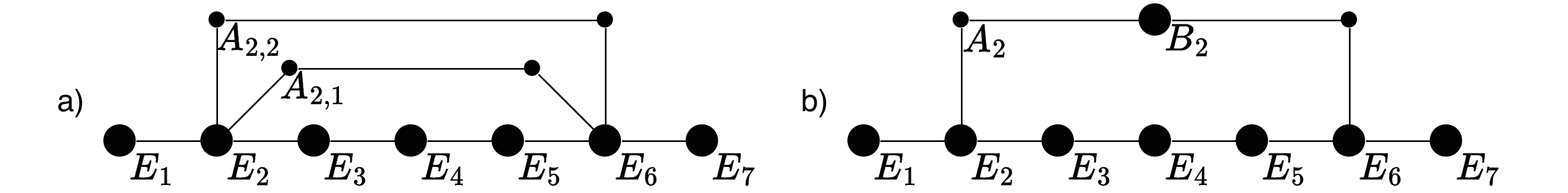}
    \caption{Dual graph: $(-K_S)^2=1$, $\DA_7$ singularity, $\delta_P(S)=\frac{6}{5}$}
\end{figure}
\par 
 Then $\tau(E_2)=\frac{3}{2}$ and the  Zariski Decomposition of the divisor $-K_S-vE_2$ is:
   { 
 {\allowdisplaybreaks\begin{align*}
\hspace*{-0.5cm}&{\text{\bf a). }} & P(v)=\begin{cases}
-K_S-vE_2-\frac{v}{6} (3E_1+5E_3+4E_4+3E_5+2E_6+E_7)\text{ if }v\in[0,1],\\
-K_S-vE_2-\frac{v}{6} (3E_1+5E_3+4E_4+3E_5+2E_6+E_7)-(v-1)(A_{2,1}+A_{2,2})\text{ if }v\in\big[1,\frac{3}{2}\big].
\end{cases} \\
\hspace*{-0.5cm}&&N(v)=\begin{cases}
\frac{v}{6} (3E_1+5E_3+4E_4+3E_5+2E_6+E_7)\text{ if }v\in[0,1],\\
\frac{v}{6} (3E_1+5E_3+4E_4+3E_5+2E_6+E_7)+(v-1)(A_{2,1}+A_{2,2})\text{ if }v\in\big[1,\frac{3}{2}\big].
\end{cases}\\
\hspace*{-0.5cm}&{\text{\bf b). }} & P(v)=\begin{cases}
-K_S-vE_2-\frac{v}{6} (3E_1+5E_3+4E_4+3E_5+2E_6+E_7)\text{ if }v\in[0,1],\\
-K_S-vE_2-\frac{v}{6} (3E_1+5E_3+4E_4+3E_5+2E_6+E_7)-(v-1)(2A_{2}+B_2)\text{ if }v\in\big[1,\frac{3}{2}\big].
\end{cases} \\
\hspace*{-0.5cm}&&N(v)=\begin{cases}
\frac{v}{6} (3E_1+5E_3+4E_4+3E_5+2E_6+E_7)\text{ if }v\in[0,1],\\
\frac{v}{6} (3E_1+5E_3+4E_4+3E_5+2E_6+E_7)+(v-1)(2A_{2}+B_2)\text{ if }v\in\big[1,\frac{3}{2}\big].
\end{cases}
\end{align*}}}
The Zariski Decomposition  follows from 
 $$-K_S-vE_2\sim_{\DR} \Big(\frac{3}{2}-v\Big)E_2+\frac{1}{4}\Big(3E_1+5E_3+4E_4+3E_5+2E_6+E_7+2A_{2,1}+2A_{2,2}\Big).$$
Moreover, 
$$(P(v))^2=\begin{cases}
1-\frac{2v^2}{3}  \text{ if }v\in[0,1],\\
\frac{(3-2v)^2}{3}  \text{ if }v\in\big [1, \frac{3}{2}\big ].
\end{cases}
P(v)\cdot E_2=\begin{cases}
\frac{2v}{3}  \text{ if }v\in[0,1],\\
2(1-\frac{2v}{3})  \text{ if }v\in\big [1, \frac{3}{2}\big ].
\end{cases}$$
We have $S_{S} (E_2)=\frac{5}{6}$. Thus, $\delta_P(S)\le \frac{6}{5}$ for $P\in E_2$. Moreover, if $P\in E_2\cap E_1$ or if $P\in E_2\backslash (E_1\cup E_3)$  we have 
$$h(v)\le\begin{cases}
\frac{5v^2}{9} \text{ if }v\in [0,1],\\
\frac{(2v - 3)(v-6)}{9} \text{ if }v\in\big [1, \frac{3}{2}\big ].
\end{cases}
\text{ or }
h(v)\le\begin{cases}
\frac{2v^2}{9} \text{ if }v\in [0,1],\\
\frac{2(3 -2v)(4v - 3)}{9} \text{ if }v\in\big [1, \frac{3}{2}\big ].
\end{cases}
$$
Thus
$S(W_{\bullet,\bullet}^{E_2};P)\le \frac{23}{36}<\frac{5}{6}$
or $S(W_{\bullet,\bullet}^{E_2};P)\le\frac{1}{3}<\frac{5}{6}$.
We get $\delta_P(S)=\frac{6}{5}$ for $P\in (E_2\cup E_6)\backslash (E_1\cup E_7)$.
\par{\bf Step 4.} Suppose $P\in E_1\cup E_7$. Without loss of generality we can assume that $P\in E_1$ since the proof is similar in other cases.  Then $\tau(E_1)=1$ and the  Zariski decomposition of the divisor $-K_S-vE_1$ is given by:
{\footnotesize
 {\allowdisplaybreaks\begin{align*}
\hspace*{-1cm}&&P(v)=\begin{cases}
-K_S-vE_1-\frac{v}{7} (6E_2+5E_3+4E_4+3E_5+2E_6+E_7) \text{ if }v\in\big [0,\frac{7}{8}\big ],\\
-K_S-vE_1-(2v-1)E_2-(3v-2)E_3-(4v-3)E_4-(5v-4)E_5-(6v-5)E_6-(7v-6)E_7-(8v-7)C \text{ if } v\in\big [\frac{7}{8}, 1\big ].
\end{cases}\\\hspace*{-1cm}&&
N(v)=
\begin{cases}
\frac{v}{7} (6E_2+5E_3+4E_4+3E_5+2E_6+E_7) \text{ if }v\in\big [0,\frac{7}{8}\big ],\\
(2v-1)E_2+(3v-2)E_3-(4v-3)E_4+(5v-4)E_5+(6v-5)E_6+(7v-6)C  \text{ if } v\in\big [\frac{7}{8}, 1\big ].
\end{cases}
\end{align*}}}
Moreover, 
$$(P(v))^2=\begin{cases}
1-\frac{8v^2}{7}  \text{ if }v\in\big [0,\frac{7}{8}\big ],\\
8(v - 1)^2   \text{ if } v\in\big [\frac{7}{8}, 1\big ].
\end{cases}
P(v)\cdot E_1=\begin{cases}
\frac{8v}{7}  \text{ if }v\in\big [0,\frac{7}{8}\big ],\\
8(1-v)   \text{ if } v\in\big [\frac{7}{8}, 1\big ].
\end{cases}$$
We have $S_{S} (E_1)=\frac{5}{8}$. Thus, $\delta_P(S)\le \frac{8}{5}$ for $P\in E_1\backslash E_2$. Moreover,  for such points we have 
$$h(v)\le\begin{cases}
\frac{32v^2}{49} \text{ if }v\in \big [0, \frac{7}{8}\big ],\\
8(1 - v )(3v - 4) \text{ if }v\in \big [\frac{7}{8},1\big ]
\end{cases}$$
Thus, $S(W_{\bullet,\bullet}^{E_1};P)\le  \frac{13}{96 }<\frac{5}{8}$. We get $\delta_P(S)=\frac{8}{5}$ for $P\in (E_1\cup E_7)\backslash (E_2\cup E_6)$.  Thus, $\delta_{\mathcal{P}} (X)=1$.
 \end{proof}
   \subsubsection{$\DA_7$ singularity (irreducible ramification)   on Du Val Del Pezzo surfaces of degree $1$}
 \begin{lemma} Let $X$ be a singular del Pezzo surface of degree $1$ with an $\DA_7$ singularity at point $\mathcal{P}$. $X$ can be realized as the double cover $X\xrightarrow{2:1}\DP(1,1,2)$, which is
ramified along a sextic curve $R\in \DP(1,1,2)$. Suppose $R$ is irreducible. Let $\mathcal{C}$ be a~curve in the~pencil $|-K_X|$ that contains~$\mathcal{P}$.  Then $\delta_{\mathcal{P}} (X)=\frac{18}{17}$.
\end{lemma}
  \begin{proof}
  Let $S$ be the minimal resolution of singularities.  Then $S$ is a weak del Pezzo surface of degree $1$. Suppose $C$ is a strict transform of $\mathcal{C}$ on $S$ and $E_1$, $E_2$, $E_3$, $E_4$, $E_5$, $E_6$ and $E_7$ are the exceptional divisors with the intersection:
 \begin{figure}[h!]
    \centering
\includegraphics[width=11cm]{chapter8/pictures/dP1_A7.pdf}
    \caption{Dual graph: $(-K_S)^2=1$, $\DA_7$ singularity (irreducible ramification divisor)}
\end{figure}
\par We have $-K_S\sim C+E_1+E_2+E_3+E_4+E_5+E_6+E_7$. Let $P$ be a point on $S$. If the ramification divisor $R$ is reducible, then this
implies that there is no $(-1)$-curve that intersects the fundamental cycle only at  $E_4$.
\par{\bf Step 1.} Suppose $P\in  E_4$.   There exist $(-1)$-curves and $(-2)$-curves  which form the following dual graph:
\begin{figure}[h!]
    \centering
\includegraphics[width=8cm]{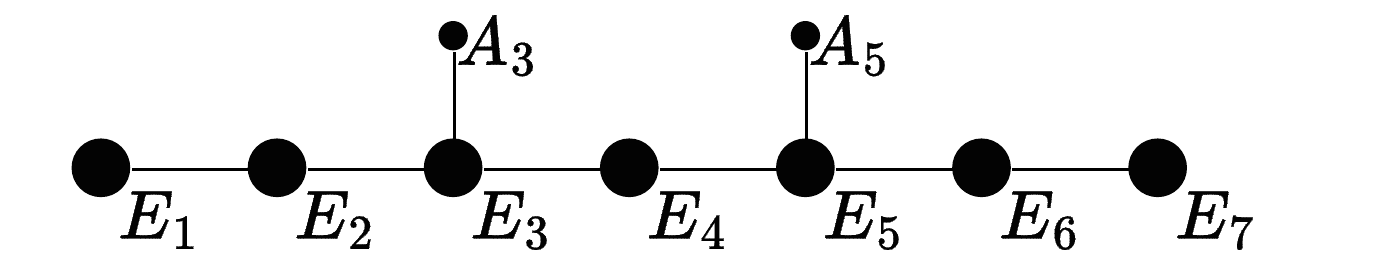}
    \caption{Dual graph: $(-K_S)^2=1$, $\DA_7$ singularity, $\delta_P(S)=\frac{18}{17}$ (1)}
\end{figure}
\par  Then $\tau(E_4)=\frac{3}{2}$ and the  Zariski Decomposition of the divisor $-K_S-vE_4$ is:
 {\allowdisplaybreaks\begin{align*}
\hspace*{-0.5cm}&&P(v)=\begin{cases}
-K_S-vE_4-\frac{v}{4} (E_1+2E_2+3E_3+3E_5+2E_6+E_7)\text{ if }v\in\big[0,\frac{4}{3}\big],\\
-K_S-vE_4-(v-1)(E_1+2E_2+3E_3+3E_5+2E_6+E_7)-(3v-4)A_3\text{ if }v\in\big[\frac{4}{3},\frac{3}{2}\big].
\end{cases} \\
\hspace*{-0.5cm}&&N(v)=\begin{cases}
\frac{v}{4} (E_1+2E_2+3E_3+3E_5+2E_6+E_7)\text{ if }v\in\big[0,\frac{4}{3}\big],\\
(v-1)(E_1+2E_2+3E_3+3E_5+2E_6+E_7)+(3v-4)A_3\text{ if }v\in\big[\frac{4}{3},\frac{3}{2}\big].
\end{cases}
\end{align*}}
The Zariski Decomposition  follows from 
 $$-K_S-vE_4\sim_{\DR} \Big(\frac{3}{2}-v\Big)E_4+\frac{1}{2}\Big(E_1+2E_2+3E_3+3E_5+2E_6+E_7+2A_3\Big).$$
Moreover, 
$$(P(v))^2=\begin{cases}
1-\frac{v^2}{2}  \text{ if }v\in\big[0,\frac{4}{3}\big],\\
(3-2v)^2 \text{ if }v\in\big[\frac{4}{3},\frac{3}{2}\big].
\end{cases}
P(v)\cdot E_4=\begin{cases}
\frac{v}{2}  \text{ if }v\in \big[0,\frac{4}{3}\big],\\
2(3-2v) \text{ if }v\in \big[\frac{4}{3},\frac{3}{2}\big].
\end{cases}$$
We have $S_{S} (E_4)=\frac{17}{18}$. Thus, $\delta_P(S)\le \frac{18}{17}$ for $P\in E_4$. Moreover, if $P\in E_4$   we have 
$$h(v)\le\begin{cases}
\frac{v^2}{2} \text{ if }v\in \big [0,\frac{4}{3}\big ],\\
2(3-2v)v \text{ if }v\in \big [\frac{4}{3},\frac{3}{2}\big ]
\end{cases}
$$
Thus $S(W_{\bullet,\bullet}^{E_2};P)\le  \frac{17}{18}$.
We get $\delta_P(S)=\frac{18}{17}$ for $P\in E_4$.\\
\par {\bf Step 2.} Suppose $P\in  E_3\cup E_5$. Without loss of generality we can assume that $P\in E_3$ since the
proof is similar in other cases. There exist $(-1)$-curves and $(-2)$-curves  which form the following dual graph:
\begin{figure}[h!]
    \centering
\includegraphics[width=7cm]{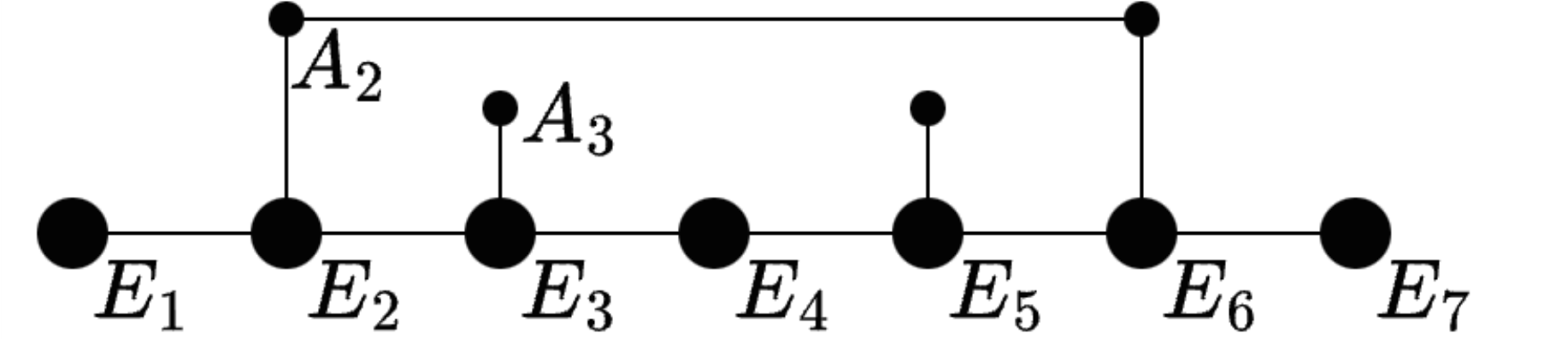}
    \caption{Dual graph: $(-K_S)^2=1$, $\DA_7$ singularity, $\delta_P(S)=\frac{18}{17}$ (2)}
\end{figure}
\par Then $\tau(E_3)=\frac{5}{3}$ and the  Zariski Decomposition of the divisor $-K_S-vE_3$ is:
{ \small  {\allowdisplaybreaks\begin{align*}
\hspace*{-0.5cm}&&P(v)=\begin{cases}
-K_S-vE_3-\frac{v}{15} (5E_1+10E_2+12E_4+9E_5+6E_6+3E_7)\text{ if }v\in[0,1],\\
-K_S-vE_3-\frac{v}{15} (5E_1+10E_2+12E_4+9E_5+6E_6+3E_7)-(v-1)A_3\text{ if }v\in\big[1,\frac{3}{2}\big],\\
-K_S-vE_3-(v-1)(E_1+2E_2+A_3)-\frac{v}{5} (4E_4+3E_5+2E_6+E_7)-(2v-3)A_2\text{ if }v\in\big[\frac{3}{2},\frac{5}{3}\big].
\end{cases}\\\hspace*{-0.5cm}&&N(v)=\begin{cases}
\frac{v}{15} (5E_1+10E_2+12E_4+9E_5+6E_6+3E_7)\text{ if }v\in[0,1],\\
\frac{v}{15} (5E_1+10E_2+12E_4+9E_5+6E_6+3E_7)+(v-1)A_3\text{ if }v\in\big[1,\frac{3}{2}\big],\\
(v-1)(E_1+2E_2+A_3)+\frac{v}{5} (4E_4+3E_5+2E_6+E_7)+(2v-3)A_2\text{ if }v\in\big[\frac{3}{2},\frac{5}{3}\big].
\end{cases}
\end{align*}}}
The Zariski Decomposition  follows from 
 $$-K_S-vE_3\sim_{\DR} \Big(\frac{5}{3}-v\Big)E_3+\frac{1}{3}\Big(2E_1+4E_2+2A_3+4E_4+3E_5+2E_6+E_7+2A_2\Big).$$
Moreover, 
$$(P(v))^2=\begin{cases}
1-\frac{8v^2}{15}  \text{ if }v\in[0,1],\\
2 - 2v + \frac{7v^2}{15}\text{ if }v\in\big[1,\frac{3}{2}\big],\\
\frac{(5-3v)^2}{5}  \text{ if }v\in\big [\frac{3}{2},\frac{5}{3}\big ].
\end{cases}
P(v)\cdot E_3=\begin{cases}
\frac{8v}{15}  \text{ if }v\in[0,1],\\
1-\frac{7v}{15}\text{ if }v\in\big[1,\frac{3}{2}\big],\\
3(1-\frac{3v}{5})  \text{ if }v\in\big [\frac{3}{2},\frac{5}{3}\big ].
\end{cases}$$
We have $S_{S} (E_3)=\frac{17}{18}$. Thus, $\delta_P(S)\le \frac{18}{17}$ for $P\in E_3$. Moreover, if $P\in E_3\cap A_3$ or if $P\in E_3\cap E_2$ we have 
{ $$h(v)\le\begin{cases}
\frac{32v^2}{225}  \text{ if }v\in[0,1],\\
\frac{(15 - 7v)(23v - 15)}{450}\text{ if }v\in\big[1,\frac{3}{2}\big],\\
\frac{3(5 - 3v)(v + 5)}{50}  \text{ if }v\in\big [ \frac{3}{2},\frac{5}{3} \big ].
\end{cases}
\text{ or }
h(v)\le\begin{cases}
\frac{112v^2}{225}  \text{ if }v\in[0,1],\\
\frac{(15 - 7v)(13v + 15)}{450}\text{ if }v\in\big[1,\frac{3}{2}\big],\\
\frac{3(5 - 3v)(11v - 5)}{50}  \text{ if }v\in\big [ \frac{3}{2},\frac{5}{3} \big ].
\end{cases}
$$}
Thus
$S(W_{\bullet,\bullet}^{E_3};P)\le \frac{14}{45}<\frac{17}{18}$
or
$S(W_{\bullet,\bullet}^{E_3};P)\le \frac{37}{45}<\frac{17}{18}$.
We get $\delta_P(S)=\frac{18}{17}$ for $P\in (E_3\cup E_5)\backslash E_4$.
\par {\bf Step 3.} Suppose $P\in  E_2\cup E_6$. Without loss of generality we can assume that
$P \in E_2$ since the proof is similar in other cases.  Then $\tau(E_2)=\frac{4}{3}$ and the   Zariski Decomposition of the divisor $-K_S-vE_2$ is:
 {\allowdisplaybreaks\begin{align*}
\hspace*{-0.5cm}&&P(v)=\begin{cases}
-K_S-vE_2-\frac{v}{6} (3E_1+5E_3+4E_4+3E_5+2E_6+E_7)\text{ if }v\in[0,1],\\
-K_S-vE_2-\frac{v}{6} (3E_1+5E_3+4E_4+3E_5+2E_6+E_7)-(v-1)A_2\text{ if }v\in\big[1,\frac{6}{5}\big],\\
-K_S-vE_2-\frac{v}{2}E_1-(v-1)(5E_3+4E_4+3E_5+2E_6+E_7+A_2)-(5v-6)A_3\text{ if }v\in\big[\frac{6}{5},\frac{4}{3}\big].
\end{cases} \\\hspace*{-0.5cm}&& N(v)=\begin{cases}
\frac{v}{6} (3E_1+5E_3+4E_4+3E_5+2E_6+E_7)\text{ if }v\in[0,1],\\
\frac{v}{6} (3E_1+5E_3+4E_4+3E_5+2E_6+E_7)+(v-1)A_2\text{ if }v\in\big[1,\frac{6}{5}\big],\\
\frac{v}{2}E_1+(v-1)(5E_3+4E_4+3E_5+2E_6+E_7+A_2)+(5v-6)A_3\text{ if }v\in\big[\frac{6}{5},\frac{4}{3}\big].
\end{cases} 
\end{align*}}
Moreover, 
$$(P(v))^2=\begin{cases}
1-\frac{2v^2}{3}  \text{ if }v\in[0,1],\\
2 - 2v + \frac{v^2}{3}\text{ if }v\in\big[1,\frac{6}{5}\big],\\
\frac{(4-3v)^2}{2}  \text{ if }v\in\big [\frac{6}{5}, \frac{4}{3}\big ].
\end{cases}
P(v)\cdot E_2=\begin{cases}
\frac{2v}{3}  \text{ if }v\in[0,1],\\
1 - \frac{v}{3}\text{ if }v\in\big[1,\frac{6}{5}\big],\\
3(2-\frac{3v}{2})  \text{ if }v\in\big [\frac{6}{5}, \frac{4}{3}\big ].
\end{cases}$$
The Zariski Decomposition  follows from 
 $$-K_S-vE_2\sim_{\DR} \Big(\frac{4}{3}-v\Big)E_2+\frac{1}{3}\Big(2E_1+5E_3+4E_4+3E_5+2E_6+E_7+A_2+2A_3\Big).$$
We have $S_{S} (E_2)=\frac{37}{45}$.
Thus, $\delta_P(S)\le \frac{45}{37}$ for $P\in E_2$. Moreover, if $P\in E_2\cap E_1$ or if $P\in E_2\backslash (E_1\cup E_3)$  we have 
$$h(v)\le\begin{cases}
\frac{5v^2}{9} \text{ if }v\in [0,1],\\
\frac{(3 - v)(2v + 3)}{18}\text{ if }v\in\big[1,\frac{6}{5}\big],\\
\frac{3(3v - 4)(7v - 12)}{8} \text{ if }v\in\big [\frac{6}{5}, \frac{4}{3}\big ].
\end{cases}
\text{ or }
h(v)\le
\begin{cases}
\frac{2v^2}{9} \text{ if }v\in [0,1],\\
\frac{(3 - v)(5v - 3)}{18}\text{ if }v\in\big[1,\frac{6}{5}\big],\\
\frac{3(3v - 4)(5v - 8)}{8} \text{ if }v\in\big [\frac{6}{5}, \frac{4}{3}\big ].
\end{cases}
$$
Thus,
$S(W_{\bullet,\bullet}^{E_2};P)\le  \frac{59}{90}<\frac{37}{45}$
or $S(W_{\bullet,\bullet}^{E_2};P)\le  \frac{13}{45}<\frac{37}{45}$.
We get $\delta_P(S)=\frac{45}{37}$ for $P\in (E_2\cup E_6)\backslash (E_3\cup E_5)$.
\par{\bf Step 4.} Suppose $P\in E_1\cup E_7$. Without loss of generality we can assume that $P\in E_1$ since the proof is similar in other cases.  Then $\tau(E_1)=1$ and the  Zariski decomposition of the divisor $-K_S-vE_1\sim C+E_1+E_2+E_3+E_4+E_5+E_6+E_7$ is given by:
{\footnotesize
 {\allowdisplaybreaks\begin{align*}
\hspace*{-1.2cm}&&P(v)=\begin{cases}
-K_S-vE_1-\frac{v}{7} (6E_2+5E_3+4E_4+3E_5+2E_6+E_7) \text{ if }v\in\big [0,\frac{7}{8}\big ],\\
-K_S-vE_1-(2v-1)E_2-(3v-2)E_3-(4v-3)E_4-(5v-4)E_5-(6v-5)E_6-(7v-6)E_7-(8v-7)C \text{ if } v\in\big [\frac{7}{8}, 1\big ].
\end{cases}\\
\hspace*{-1.2cm}&&
N(v)=
\begin{cases}
\frac{v}{7} (6E_2+5E_3+4E_4+3E_5+2E_6+E_7) \text{ if }v\in\big [0,\frac{7}{8}\big ],\\
(2v-1)E_2+(3v-2)E_3+(4v-3)E_4+(5v-4)E_5+(6v-5)E_6+(7v-6)C  \text{ if } v\in\big [\frac{7}{8}, 1\big ].
\end{cases}
\end{align*}}}
Moreover, 
$$(P(v))^2=\begin{cases}
1-\frac{8v^2}{7}  \text{ if }v\in\big [0,\frac{7}{8}\big ],\\
8(v - 1)^2   \text{ if } v\in\big [\frac{7}{8}, 1\big ].
\end{cases}
P(v)\cdot E_1=\begin{cases}
\frac{8v}{7}  \text{ if }v\in\big [0,\frac{7}{8}\big ],\\
8(1-v)   \text{ if } v\in\big [\frac{7}{8}, 1\big ].
\end{cases}$$
We have $S_{S} (E_1)=\frac{5}{8}$.
Thus, $\delta_P(S)\le \frac{8}{5}$ for $P\in E_1\backslash E_2$. Moreover,  for such points we have 
$$h(v)\le\begin{cases}
\frac{32v^2}{49} \text{ if }v\in \big [0, \frac{7}{8}\big ],\\
8(1 - v )(3v - 4) \text{ if }v\in \big [\frac{7}{8},1\big ]
\end{cases}$$
Thus,
$S(W_{\bullet,\bullet}^{E_1};P)\le \frac{13}{96}<\frac{5}{8}$.
We get $\delta_P(S)=\frac{8}{5}$ for $P\in (E_1\cup E_7)\backslash (E_2\cup E_6)$.  Thus, $\delta_{\mathcal{P}} (X)=\frac{18}{17}$.
 \end{proof}
  \subsubsection{$\DA_8$ singularity  on Du Val Del Pezzo surfaces of degree $1$}
 \begin{lemma} Let $X$ be a singular del Pezzo surface of degree $1$ with an $\DA_8$ singularity at point $\mathcal{P}$.  Let $\mathcal{C}$ be a~curve in the~pencil $|-K_X|$ that contains~$\mathcal{P}$.  Then $\delta_{\mathcal{P}} (X)=1$.
 \end{lemma}
  \begin{proof}
 Let $S$ be the minimal resolution of singularities.  Then $S$ is a weak del Pezzo surface of degree $1$. Suppose $C$ is a strict transform of $\mathcal{C}$ on $S$ and $E_1$, $E_2$, $E_3$, $E_4$, $E_5$, $E_6$, $E_7$ and $E_8$ are the exceptional divisors with the intersection:
 \begin{figure}[h!]
    \centering
\includegraphics[width=12cm]{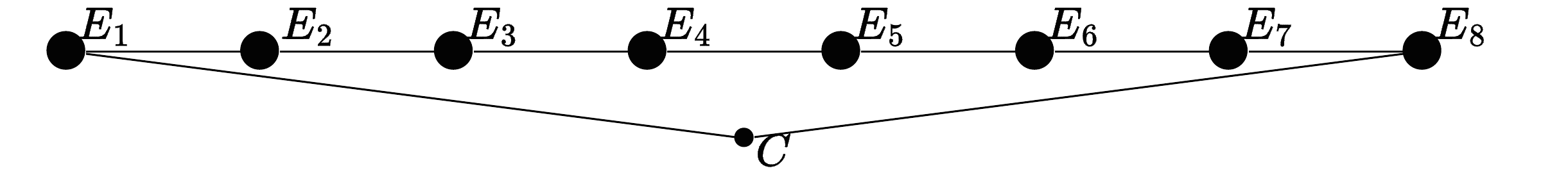}
    \caption{Dual graph: $(-K_S)^2=1$, $\DA_8$ singularity}
\end{figure}
\par 
 We have $-K_S\sim C+E_1+E_2+E_3+E_4+E_5+E_6+E_7+E_8$.  Let $P$ be a point on $S$.
\par{\bf Step 1.} Suppose $P\in  E_4\cup E_5$. Without loss of generality we can assume that $P\in E_4$ since the proof
is similar in other cases.  There exist $(-1)$-curves and $(-2)$-curves  which form the following dual graph:
\begin{figure}[h!]
    \centering
\includegraphics[width=9cm]{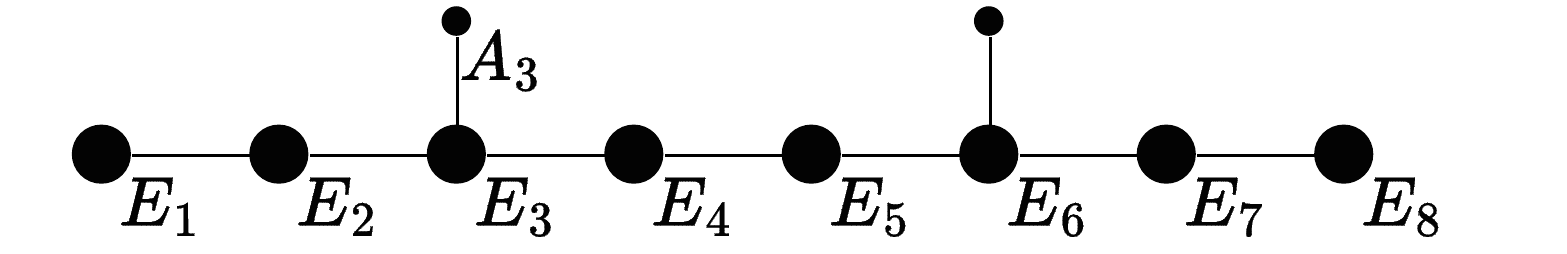}
    \caption{Dual graph: $(-K_S)^2=1$, $\DA_8$ singularity, $\delta_P(S)=1$}
\end{figure}
\par 
 Then $\tau(E_4)=\frac{5}{3}$ and the  Zariski Decomposition of the divisor $-K_S-vE_4$ is:
{ 
 {\allowdisplaybreaks\begin{align*}
\hspace*{-0.7cm}&&P(v)=\begin{cases}
-K_S-vE_4-\frac{v}{20} (5E_1+10E_2+15E_3+16E_5+12E_6+8E_7+4E_8)\text{ if }v\in\big[0,\frac{4}{3}\big],\\
-K_S-vE_4-(v-1)(E_1+2E_2+3E_3)-\frac{v}{5} (4E_5+3E_6+2E_7+E_8)-(3v-4)A_3\text{ if }v\in\big[\frac{4}{3},\frac{5}{3}\big].
\end{cases} \\
\hspace*{-0.7cm}&&N(v)=\begin{cases}
\frac{v}{20} (5E_1+10E_2+15E_3+16E_5+12E_6+8E_7+4E_8)\text{ if }v\in\big[0,\frac{4}{3}\big],\\
(v-1)(E_1+2E_2+3E_3)+\frac{v}{5} (4E_5+3E_6+2E_7+E_8)+(3v-4)A_3\text{ if }v\in\big[\frac{4}{3},\frac{5}{3}\big].
\end{cases}
\end{align*}}}
The Zariski Decomposition  follows from 
 $$-K_S-vE_4\sim_{\DR} \Big(\frac{5}{3}-v\Big)E_4+\frac{1}{3}\Big(2E_1+4E_2+6E_3+4E_5+3E_6+2E_7+E_8+3A_3\Big).$$
Moreover, 
$$(P(v))^2=\begin{cases}
1-\frac{9v^2}{20}  \text{ if }v\in\big[0,\frac{4}{3}\big],\\
\frac{(5-3v)^2}{5}\text{ if }v\in\big[\frac{4}{3},\frac{5}{3}\big].
\end{cases}
P(v)\cdot E_4=\begin{cases}
\frac{9v}{20}  \text{ if }v\in \big[0,\frac{4}{3}\big],\\
3(2-\frac{3v}{5}) \text{ if }v\in \big[\frac{4}{3},\frac{5}{3}\big].
\end{cases}$$
We have $S_{S} (E_4)=1$. Thus, $\delta_P(S)\le 1$ for $P\in E_4$. Moreover, if $P\in E_4\cap E_5$ or if $P\in E_4\backslash E_5$   we have 
$$h(v)\le\begin{cases}
\frac{369v^2}{800} \text{ if }v\in \big [0,\frac{4}{3}\big ],\\
\frac{3(3v - 5)(v - 15)}{50} \text{ if }v\in \big [\frac{4}{3},\frac{5}{3}\big ].
\end{cases}
\text{ or }
h(v)\le\begin{cases}
\frac{351v^2}{800} \text{ if }v\in \big [0,\frac{4}{3}\big ],\\
\frac{9(3v - 5)(5 - 7v)}{50} \text{ if }v\in \big [\frac{4}{3},\frac{5}{3}\big ].
\end{cases}
$$
Thus,
$S(W_{\bullet,\bullet}^{E_2};P)\le  1$.
We get $\delta_P(S)=1$ for $P\in E_4\cup E_5$.\\
{\bf Step 2.} Suppose $P\in  E_3\cup E_5$.  Without loss of generality we can assume that $P\in E_3$ since the proof
is similar in other cases.  Then $\tau(E_3)=2$ and the   Zariski Decomposition of the divisor $-K_S-vE_3$ is:
 {\allowdisplaybreaks\begin{align*}
&&P(v)=\begin{cases}
-K_S-vE_3-\frac{v}{6} (2E_1+4E_2+5E_4+4E_5+3E_6+2E_7+E_8)\text{ if }v\in[0,1],\\
-K_S-vE_3-\frac{v}{6} (2E_1+4E_2+5E_4+4E_5+3E_6+2E_7+E_8)-(v-1)A_3\text{ if }v\in[1,2].
\end{cases} \\
&&N(v)=\begin{cases}
\frac{v}{6} (2E_1+4E_2+5E_4+4E_5+3E_6+2E_7+E_8)\text{ if }v\in[0,1],\\
\frac{v}{6} (2E_1+4E_2+5E_4+4E_5+3E_6+2E_7+E_8)+(v-1)A_3\text{ if }v\in[1,2].
\end{cases}
\end{align*}}
The Zariski Decomposition  follows from 
 $$-K_S-vE_3\sim_{\DR} (2-v)E_3+\frac{1}{3}\Big(2E_1+4E_2+5E_4+4E_5+3E_6+2E_7+E_8\Big)+A_3.$$
Moreover, 
$$(P(v))^2=\begin{cases}
1-\frac{v^2}{2}  \text{ if }v\in[0,1],\\
\frac{(2-v)^2}{2}  \text{ if }v\in[1,2].
\end{cases}
P(v)\cdot E_3=\begin{cases}
\frac{v}{2}  \text{ if }v\in[0,1],\\
1-\frac{v}{2}  \text{ if }v\in[1,2].
\end{cases}$$
We have
$S_{S} (E_3)=1$. Thus, $\delta_P(S)\le 1$ for $P\in E_3$. Moreover, if $P\in E_4\cap E_2$ or if $P\in E_4\backslash (E_2\cup E_4)$   we have 
$$h(v)\le\begin{cases}
\frac{11v^2}{24} \text{ if }v\in [0,1],\\
\frac{(2 -v )(5v+6)}{24} \text{ if }v\in [1,2].
\end{cases}
\text{ or }
h(v)\le\begin{cases}
\frac{v^2}{8} \text{ if }v\in [0,1],\\
\frac{(2 -v )(3v - 2)}{8} \text{ if }v\in [1,2].
\end{cases}
$$
Thus,
$S(W_{\bullet,\bullet}^{E_3};P)\le \frac{5}{6}< 1$
or
$S(W_{\bullet,\bullet}^{E_3};P)\le  \frac{1}{3}< 1$.
We get $\delta_P(S)=1$ for $P\in (E_3\cup E_6)\backslash (E_4\cup E_5)$.
\par{\bf Step 3.} Suppose $P\in  E_2\cup E_7$.  Then $\tau(E_2)=\frac{4}{3}$ and the   Zariski Decomposition of the divisor $-K_S-vE_2$ is:
 {\small\allowdisplaybreaks\begin{align*}
&&P(v)=\begin{cases}
-K_S-vE_2-\frac{v}{2}E_1-\frac{v}{7} (6E_3+5E_4+4E_5+3E_6+2E_7+E_7)\text{ if }v\in\big[0,\frac{7}{6}\big],\\
-K_S-vE_2-\frac{v}{2}E_1-(v-1)(6E_3+5E_4+4E_5+3E_6+2E_7+E_7)-(6v-7)A_3\text{ if }v\in\big[\frac{7}{6},\frac{4}{3}\big].
\end{cases}\\&&N(v)=\begin{cases}
\frac{v}{2}E_1+\frac{v}{7} (6E_3+5E_4+4E_5+3E_6+2E_7+E_7)\text{ if }v\in\big[0,\frac{7}{6}\big],\\
\frac{v}{2}E_1+(v-1)(6E_3+5E_4+4E_5+3E_6+2E_7+E_7)+(6v-7)A_3\text{ if }v\in\big[\frac{7}{6},\frac{4}{3}\big].
\end{cases}
\end{align*}}
The Zariski Decomposition  follows from 
 $$-K_S-vE_2\sim_{\DR} \Big(\frac{4}{3}-v\Big)E_2+\frac{1}{3}\Big(2E_1+6E_3+5E_4+4E_5+3E_6+2E_7+E_7+3A_3\Big).$$
Moreover, 
$$(P(v))^2=\begin{cases}
1-\frac{9v^2}{14}  \text{ if }v\in\big [0,\frac{7}{6}\big ],\\
\frac{(4-3v)^2}{2}  \text{ if }v\in\big [\frac{7}{6}, \frac{4}{3}\big ].
\end{cases}
P(v)\cdot E_2=\begin{cases}
\frac{9v}{14}  \text{ if }v\in\big [0,\frac{7}{6}\big ],\\
3(1-\frac{3v}{2})  \text{ if }v\in\big [\frac{7}{6}, \frac{4}{3}\big ].
\end{cases}$$
We have $S_{S} (E_2)=\frac{5}{6}$.
Thus, $\delta_P(S)\le \frac{6}{5}$ for $P\in E_2$. Moreover, if $P\in E_2\backslash E_3$  we have 
$$h(v)\le\begin{cases}
\frac{207v^2}{392} \text{ if }v\in \big [0, \frac{7}{6}\big ],\\
\frac{3(3v - 4)(7v - 12)}{8} \text{ if }v\in\big[\frac{7}{6},\frac{4}{3}\big].
\end{cases}
$$
Thus
$$S(W_{\bullet,\bullet}^{E_2};P)\le 2 \Big(\int_0^{7/6} \frac{207v^2}{392} dv+\int_{7/6}^{4/3} \frac{3(3v - 4)(7v - 12)}{8} dv\Big)= \frac{1}{4}<\frac{5}{6}$$
We get $\delta_P(S)=\frac{6}{5}$ for $P\in (E_2\cup E_7)\backslash (E_3\cup E_6)$.
\par{\bf Step 4.} Suppose $P\in E_1\cup E_8$. Without loss of generality we can assume that $P\in E_1$ since the proof is similar in other cases.  Then $\tau(E_1)=1$ and the  Zariski decomposition of the divisor $-K_S-vE_1\sim C+(1-v)E_1+E_2+E_3+E_4+E_5+E_6+E_7+E_8$ is given by:
{ \footnotesize	
 {\allowdisplaybreaks\begin{align*}
\hspace*{-2cm}&&P(v)=\begin{cases}
-K_S-vE_1-\frac{v}{8} (7E_2+6E_3+5E_4+4E_5+3E_6+2E_7+E_8) \text{ if }v\in\big [0,\frac{8}{9}\big ],\\
-K_S-vE_1-(2v-1)E_2-(3v-2)E_3-(4v-3)E_4-(5v-4)E_5-(6v-5)E_6-(7v-6)E_7-(8v-7)E_8-(9v-8)C \text{ if } v\in\big [\frac{8}{9}, 1\big ].
\end{cases}\\
\hspace*{-2cm}&&
N(v)=
\begin{cases}
\frac{v}{8} (7E_2+6E_3+5E_4+4E_5+3E_6+2E_7+E_8) \text{ if }v\in\big [0,\frac{8}{9}\big ],\\
(2v-1)E_2+(3v-2)E_3+(4v-3)E_4+(5v-4)E_5+(6v-5)E_6+(7v-6)E_8+(9v-8)C\text{ if }v\in\big [\frac{8}{9},1\big ].
\end{cases}
\end{align*}}}
Moreover, 
$$(P(v))^2=\begin{cases}
1-\frac{9v^2}{8}  \text{ if }v\in\big [0,\frac{8}{9}\big ],\\
9(v - 1)^2  \text{ if }v\in\big [\frac{8}{9},1\big ].
\end{cases}
P(v)\cdot E_1=\begin{cases}
\frac{9v}{8}  \text{ if }v\in\big [0,\frac{8}{9}\big ],\\
9(1-v)  \text{ if }v\in\big [\frac{8}{9},1\big ].
\end{cases}$$
We have $S_{S} (E_1)=\frac{17}{27}$.
Thus, $\delta_P(S)\le \frac{27}{17}$ for $P\in E_1\backslash E_2$. Moreover,  for such points we have 
$$h(v)\le\begin{cases}
\frac{81v^2}{128} \text{ if }v\in \big [0, \frac{8}{9}\big ],\\
\frac{9(1 - v )(9v - 7)}{2} \text{ if }v\in \big [\frac{8}{9},1\big ].
\end{cases}$$
Thus,
$S(W_{\bullet,\bullet}^{E_1};P)\le \frac{10}{27}<\frac{17}{27}$.
We get $\delta_P(S)=\frac{27}{17}$ for $P\in (E_1\cup E_8)\backslash (E_2\cup E_7)$. \\ Thus, $\delta_{\mathcal{P}} (X)=1$.
\end{proof}
\subsubsection{$\mathbb{D}_4$ singularity on Du Val Del Pezzo surfaces of degree $1$}
\label{dP1-D4}
 \begin{lemma} Let $X$ be a singular del Pezzo surface of degree $1$ with an $\mathbb{D}_4$ singularity at point $\mathcal{P}$.  Let $\mathcal{C}$ be a~curve in the~pencil $|-K_X|$ that contains~$\mathcal{P}$. Then $\delta_{\mathcal{P}} (X)=1$.
 \end{lemma}
 \begin{proof}
Let $S$ be the minimal resolution of singularities.  Then $S$ is a weak del Pezzo surface of degree $1$. Suppose $C$ is a strict transform of $\mathcal{C}$ on $S$ and $E$, $E_1$, $E_2$ and $E_3$ are the exceptional divisors with the intersection:
\begin{figure}[h!]
    \centering
\includegraphics[width=3.2cm]{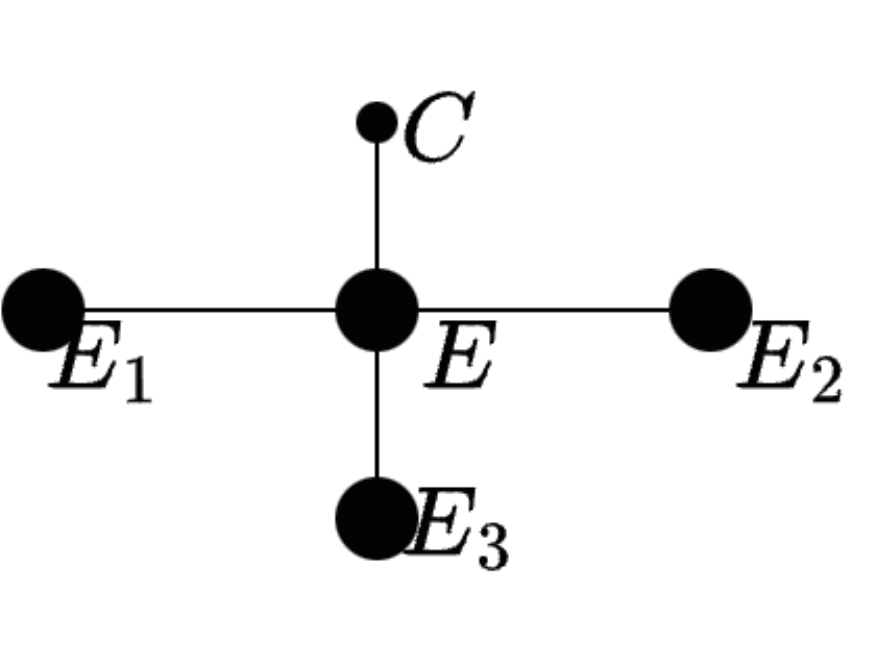}
    \caption{Dual graph: $(-K_S)^2=1$, $\mathbb{D}_4$ singularity}
\end{figure}
\par 
 We have $-K_S\sim C+2E+E_1+E_2+E_3$. Let $P$ be a point on $S$.
\par {\bf Step 1.} Suppose $P\in  E$.  Then $\tau(E)=2$ and the  Zariski decomposition of the divisor $-K_S-vE\sim (2-v)E+E_1+E_2+E_3+C$ is:
 {\allowdisplaybreaks\begin{align*}
&&P(v)=\begin{cases}
-K_S-vE-\frac{v}{2} (E_1+E_2+E_3)\text{ if }v\in[0,1],\\
-K_S-vE-\frac{v}{2} (E_1+E_2+E_3)-(v-1)C\text{ if }v\in[1,2].
\end{cases}\\&&
N(v)=\begin{cases}
\frac{v}{2} (E_1+E_2+E_3)\text{ if }v\in[0,1],\\
\frac{v}{2} (E_1+E_2+E_3)+(v-1)C\text{ if }v\in[1,2].
\end{cases}
\end{align*}} 
Moreover, 
$$(P(v))^2=\begin{cases}
1-\frac{v^2}{2}  \text{ if }v\in[0,1],\\
\frac{(2-v)^2}{2}  \text{ if }v\in[1,2].
\end{cases}
P(v)\cdot E=\begin{cases}
\frac{v}{2}  \text{ if }v\in[0,1],\\
1-\frac{v}{2}  \text{ if }v\in[1,2].
\end{cases}$$
We have $S_{S} (E)=1$
Thus, $\delta_P(S)\le 1$ for $P\in E$. Moreover, if $P\in E\cap (E_1\cup E_2\cup E_3)$ or if $P\in E\backslash (E_1\cup E_2\cup E_3)$   we have 
$$h(v)\le\begin{cases}
\frac{3v^2}{8} \text{ if }v\in [0,1],\\
\frac{(2 -v )(2+v)}{24} \text{ if }v\in [1,2].
\end{cases}
\text{ or }
h(v)\le\begin{cases}
\frac{v^2}{8} \text{ if }v\in [0,1],\\
\frac{(2 -v )(3v - 2)}{8} \text{ if }v\in [1,2].
\end{cases}
$$
Thus,
$S(W_{\bullet,\bullet}^{E};P)\le \frac{2}{3}< 1$
or
$S(W_{\bullet,\bullet}^{E};P)\le  \frac{1}{3}< 1$.
We get $\delta_P(S)=1$ for $P\in E$.
\par{\bf Step 2.} Suppose $P\in E_1\cup E_2\cup E_3$. Without loss of generality we can assume that $P\in E_1$ since the
proof is similar in other cases.  Then $\tau(E_1)=1$ and the  Zariski decomposition of the divisor $-K_S-vE_1\sim  C+2E+(1-v)E_1+E_2+E_3$ is given by:
$$P(v)=
-K_S-vE_1-\frac{v}{2} (2E+E_1+E_2)
\text{ and }
N(v)=
\frac{v}{2} (2E+E_1+E_2)\text{ if }v\in[0,1].$$
Moreover, 
$$(P(v))^2=(1-v)(1+v)\text{ and }
P(v)\cdot E_1=v\text{ if }v\in[0,1].$$
We have $S_{S} (E_1)=\frac{2}{3}$.
Thus, $\delta_P(S)\le \frac{3}{2}$ for $P\in E_1$. Moreover,  for $E_1\backslash E$ such points we have 
$h(v)\le \frac{v^2}{2}\text{ if }v\in[0,1].$
Thus,
$S(W_{\bullet,\bullet}^{E_1};P)\le \frac{1}{3}<\frac{2}{3}$.
We get $\delta_P(S)=\frac{3}{2}$ for $P\in (E_1\cup E_2\cup E_3)\backslash E$.  Thus, $\delta_{\mathcal{P}} (X)=1$.
\end{proof}
\subsubsection{$\mathbb{D}_5$ singularity on Du Val Del Pezzo surfaces of degree $1$}
\label{dP1-D5}
 \begin{lemma} Let $X$ be a singular del Pezzo surface of degree $1$ with an $\mathbb{D}_5$ singularity at point $\mathcal{P}$.  Let $\mathcal{C}$ be a~curve in the~pencil $|-K_X|$ that contains~$\mathcal{P}$. Then $\delta_{\mathcal{P}} (X)=\frac{6}{7}$.
 \end{lemma}
 \begin{proof}
Let $S$ be the minimal resolution of singularities.  Then $S$ is a weak del Pezzo surface of degree $1$. Suppose $C$ is a strict transform of $\mathcal{C}$ on $S$ and $E$, $E_1$, $E_2$, $E_3$ and $E_4$ are the exceptional divisors with the intersection:
\begin{figure}[h!]
    \centering
\includegraphics[width=4cm]{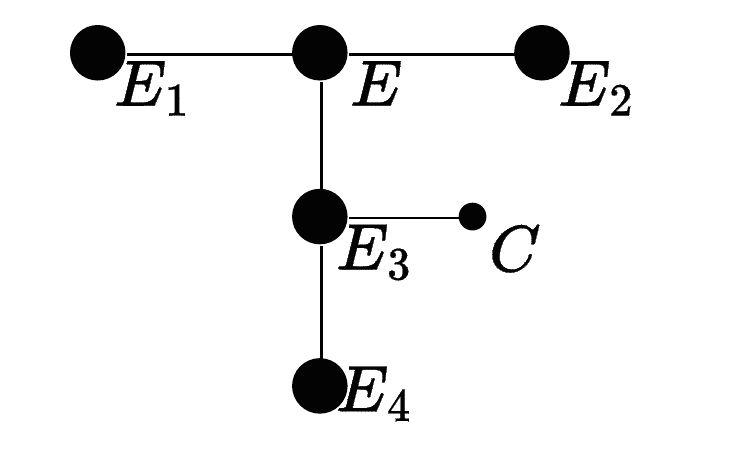}
    \caption{Dual graph: $(-K_S)^2=1$, $\mathbb{D}_5$ singularity}
\end{figure}
\par
We have $-K_S\sim C+E_1+E_2+2E+2E_3+E_4$. Let $P$ be a point on $S$.
\par{\bf Step 1.} Suppose $P\in  E$.  Then $\tau(E)=2$ and the  Zariski decomposition of the divisor $-K_S-vE\sim (2-v)E+E_1+E_2+2E_3+E_4+C$ is:
 {\allowdisplaybreaks\begin{align*}
&&P(v)=\begin{cases}
-K_S-vE-\frac{v}{6} (3E_1+3E_2+4E_3+2E_4)\text{ if }v\in\big[0,\frac{3}{2}\big],\\
-K_S-vE-\frac{v}{2} (E_1+E_2)-(v-1)(2E_3+E_4)-(2v-3)C\text{ if }v\in\big[\frac{3}{2},2\big].
\end{cases} \\&&N(v)=\begin{cases}
\frac{v}{6} (3E_1+3E_2+4E_3+2E_4)\text{ if }v\in\big[0,\frac{3}{2}\big],\\
\frac{v}{2} (E_1+E_2)+(v-1)(2E_3+E_4)+(2v-3)C\text{ if }v\in\big[\frac{3}{2},2\big].
\end{cases}
\end{align*}}
Moreover, 
$$(P(v))^2=\begin{cases}
1-\frac{v^2}{3}  \text{ if }v\in\big [0,\frac{3}{2}\big ],\\
(2-v)^2  \text{ if }v\in\big[\frac{3}{2},2\big].
\end{cases}
P(v)\cdot E=\begin{cases}
\frac{v}{3}  \text{ if }v\in\big [0,\frac{3}{2}\big ],\\
2-v  \text{ if }v\in\big[\frac{3}{2},2\big].
\end{cases}$$
We have
$S_{S} (E)=\frac{7}{6}$.
Thus, $\delta_P(S)\le \frac{6}{7}$ for $P\in E$. Moreover, if $P\in E\cap (E_1\cup E_2)$ or if $P\in E\backslash (E_1\cup E_2)$   we have 
$$h(v)\le\begin{cases}
\frac{2v^2}{9} \text{ if }v\in \big [0, \frac{3}{2}\big ],\\
2-v \text{ if }v\in \big [\frac{3}{2}, 2\big ].
\end{cases}
\text{ or }
h(v)\le\begin{cases}
\frac{5v^2}{18} \text{ if }v\in \big [0, \frac{3}{2}\big ],\\
\frac{(2 -v )(3v - 2)}{2} \text{ if }v\in \big [\frac{3}{2}, 2\big ].
\end{cases}
$$
Thus,
$S(W_{\bullet,\bullet}^{E};P)\le\frac{3}{4}< \frac{7}{6}$
or
$S(W_{\bullet,\bullet}^{E};P)\le   1<\frac{7}{6}$.
We get $\delta_P(S)=\frac{6}{7}$ for $P\in E$.
\par{\bf Step 2.} Suppose $P\in  E_1\cup E_2$. Without loss of generality we can assume that $P\in E_1$ since the proof
is similar in other cases.  There exist $(-1)$-curves and $(-2)$-curves  which form one of the following dual graphs:
\begin{figure}[h!]
    \centering
\hspace*{-1cm}\includegraphics[width=18cm]{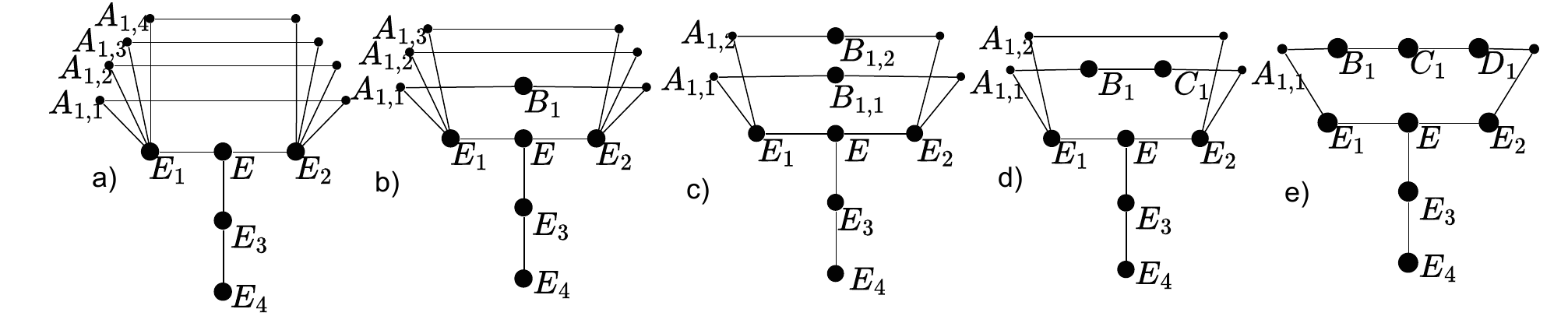}
    \caption{Dual graph: $(-K_S)^2=1$, $\mathbb{D}_5$ singularity, $\delta_P(S)=\frac{4}{3}$}
\end{figure}
\par    Then $\tau(E_1)=\frac{5}{4}$ and the  Zariski Decomposition of the divisor $-K_S-vE_1$ is:
   { 
 {\allowdisplaybreaks\begin{align*}
\hspace*{-0.7cm}&{\text{\bf a). }} & P(v)=\begin{cases}
-K_S-vE_1-\frac{v}{5} (6E+3E_2+4E_3+2E_4)\text{ if }v\in[0,1],\\
-K_S-vE_1-\frac{v}{5} (6E+3E_2+4E_3+2E_4)-(v-1)(A_{1,1}+A_{1,2}+A_{1,3}+A_{1,4})\text{ if }v\in\big[1,\frac{5}{4}\big].
\end{cases} \\
\hspace*{-0.7cm}&&N(v)=\begin{cases}
\frac{v}{5} (6E+3E_2+4E_3+2E_4)\text{ if }v\in[0,1],\\
\frac{v}{5} (6E+3E_2+4E_3+2E_4)+(v-1)(A_{1,1}+A_{1,2}+A_{1,3}+A_{1,4})\text{ if }v\in\big[1,\frac{5}{4}\big].
\end{cases}\\
\hspace*{-0.7cm}&{\text{\bf b). }} & P(v)=\begin{cases}
-K_S-vE_1-\frac{v}{5} (6E+3E_2+4E_3+2E_4)\text{ if }v\in[0,1],\\
-K_S-vE_1-\frac{v}{5} (6E+3E_2+4E_3+2E_4)-(v-1)(2A_{1,1}+B_1+A_{1,2}+A_{1,3})\text{ if }v\in\big[1,\frac{5}{4}\big].
\end{cases} \\
\hspace*{-0.7cm}&&N(v)=\begin{cases}
\frac{v}{5} (6E+3E_2+4E_3+2E_4)\text{ if }v\in[0,1],\\
\frac{v}{5} (6E+3E_2+4E_3+2E_4)+(v-1)(2A_{1,1}+B_1+A_{1,2}+A_{1,3})\text{ if }v\in\big[1,\frac{5}{4}\big].
\end{cases}\\
\hspace*{-0.7cm}&{\text{\bf c). }} & P(v)=\begin{cases}
-K_S-vE_1-\frac{v}{5} (6E+3E_2+4E_3+2E_4)\text{ if }v\in[0,1],\\
-K_S-vE_1-\frac{v}{5} (6E+3E_2+4E_3+2E_4)-(v-1)(2A_{1,1}+B_{1,1}+A_{1,2}+B_{1,2})\text{ if }v\in\big[1,\frac{5}{4}\big].
\end{cases} \\
\hspace*{-0.7cm}&&N(v)=\begin{cases}
\frac{v}{5} (6E+3E_2+4E_3+2E_4)\text{ if }v\in[0,1],\\
\frac{v}{5} (6E+3E_2+4E_3+2E_4)+(v-1)(2A_{1,1}+B_{1,1}+A_{1,2}+B_{1,2})\text{ if }v\in\big[1,\frac{5}{4}\big].
\end{cases}\\
\hspace*{-0.7cm}&{\text{\bf d). }} & P(v)=\begin{cases}
-K_S-vE_1-\frac{v}{5} (6E+3E_2+4E_3+2E_4)\text{ if }v\in[0,1],\\
-K_S-vE_1-\frac{v}{5} (6E+3E_2+4E_3+2E_4)-(v-1)(3A_{1,1}+2B_{1}+C_1+A_{1,2})\text{ if }v\in\big[1,\frac{5}{4}\big].
\end{cases} \\
\hspace*{-0.7cm}&&N(v)=\begin{cases}
\frac{v}{5} (6E+3E_2+4E_3+2E_4)\text{ if }v\in[0,1],\\
\frac{v}{5} (6E+3E_2+4E_3+2E_4)+(v-1)(3A_{1,1}+2B_{1}+C_1+A_{1,2})\text{ if }v\in\big[1,\frac{5}{4}\big].
\end{cases}\\
\hspace*{-0.7cm}&{\text{\bf e). }} & P(v)=\begin{cases}
-K_S-vE_1-\frac{v}{5} (6E+3E_2+4E_3+2E_4)\text{ if }v\in[0,1],\\
-K_S-vE_1-\frac{v}{5} (6E+3E_2+4E_3+2E_4)-(v-1)(4A_{1,1}+3B_{1}+2C_1+D_{1})\text{ if }v\in\big[1,\frac{5}{4}\big].
\end{cases} \\
\hspace*{-0.7cm}&&N(v)=\begin{cases}
\frac{v}{5} (6E+3E_2+4E_3+2E_4)\text{ if }v\in[0,1],\\
\frac{v}{5} (6E+3E_2+4E_3+2E_4)+(v-1)(4A_{1,1}+3B_{1}+2C_1+D_{1})\text{ if }v\in\big[1,\frac{5}{4}\big].
\end{cases}
\end{align*}}}
The Zariski Decomposition in part a). follows from 
 $$-K_S-vE_1\sim_{\DR} \Big(\frac{5}{4}-v\Big)E_1+\frac{1}{4}\Big(6E+3E_2+4E_3+2E_4+A_{1,1}+A_{1,2}+A_{1,3}+A_{1,4}\Big).$$
 A similar statement holds in other parts. Moreover, 
$$(P(v))^2=\begin{cases}
1-\frac{4v^2}{5}  \text{ if }v\in[0,1],\\
\frac{(5-4v)^2}{5}  \text{ if }v\in\big [1, \frac{5}{4}\big ].
\end{cases}
P(v)\cdot E_1=\begin{cases}
\frac{4v}{5}  \text{ if }v\in[0,1],\\
4(1-\frac{4v}{5})  \text{ if }v\in\big [1, \frac{5}{4}\big ].
\end{cases}$$
We have
$S_{S} (E_1)=\frac{3}{4}$. Thus, $\delta_P(S)\le \frac{4}{3}$ for $P\in E_1$. Moreover, if $P\in E_1\backslash E$  we have 
$$h(v)\le\begin{cases}
\frac{8v^2}{25} \text{ if }v\in [0,1],\\
\frac{4(5 - 4v)(7v - 5)}{25} \text{ if }v\in \big [1, \frac{5}{4}\big ].
\end{cases}
$$
Thus,
$S(W_{\bullet,\bullet}^{E_1};P)\le  \frac{19}{60}<\frac{3}{4}$.
We get $\delta_P(S)=\frac{4}{3}$ for $P\in (E_1\cup E_2)\backslash E$.
\par{\bf Step 3.} Suppose $P\in  E_3$.  Then $\tau(E_3)=2$ and the  Zariski decomposition of the divisor $-K_S-vE_3\sim C+E_1+E_2+2E+(2-v)E_3+E_4$ is:
 {\allowdisplaybreaks\begin{align*}
&& P(v)=\begin{cases}
-K_S-vE_3-\frac{v}{2} (E_4+2E+E_1+E_2)\text{ if }v\in[0,1],\\
-K_S-vE_3-\frac{v}{2} (E_4+2E+E_1+E_2)-(v-1)C\text{ if }v\in[1,2].
\end{cases} \\
&&\text{}N(v)=\begin{cases}
\frac{v}{2} (E_4+2E+E_1+E_2)\text{ if }v\in[0,1],\\
\frac{v}{2} (E_4+2E+E_1+E_2)+(v-1)C\text{ if }v\in[1,2].
\end{cases}
\end{align*}}
Moreover, 
$$(P(v))^2=\begin{cases}
1-\frac{v^2}{2}  \text{ if }v\in[0,1],\\
\frac{(2-v)^2}{2}  \text{ if }v\in[1,2].
\end{cases}
P(v)\cdot E_3=\begin{cases}
\frac{v}{2}  \text{ if }v\in[0,1],\\
1-\frac{v}{2}  \text{ if }v\in[1,2].
\end{cases}$$
Now we apply the computation from Section \ref{dP1-D4} (Step 1.) and get that $\delta_P(S)=1$ for $P\in E_3\backslash E$.
\par{\bf Step 4.} Suppose $P\in E_4$.  Then $\tau(E_4)=1$ and the  Zariski decomposition of the divisor $-K_S-vE_4\sim C+E_1+E_2+2E+2E_3+(1-v)E_4$ is given by:
$$P(v)=
-K_S-vE_4-\frac{v}{2} (2E_3+2E+E_1+E_2)
\text{ and }
N(v)=
\frac{v}{2} (2E_3+2E+E_1+E_2)\text{ if }v\in[0,1].$$
Moreover, 
$$(P(v))^2=(1-v)(1+v)\text{ and }
P(v)\cdot E_4=v\text{ if }v\in[0,1].$$
Now we apply the computation from Section \ref{dP1-D4} (Step 2.) and get that $\delta_P(S)=\frac{3}{2}$ for $P\in E_4\backslash E_3$. 
\\Thus, $\delta_{\mathcal{P}} (X)=\frac{6}{7}$.
\end{proof}
\subsubsection{$\mathbb{D}_6$ singularity on Du Val Del Pezzo surfaces of degree $1$}\label{dP1-D6}
 \begin{lemma}  Let $X$ be a singular del Pezzo surface of degree $1$ with an $\mathbb{D}_6$ singularity at point $\mathcal{P}$.  Let $\mathcal{C}$ be a~curve in the~pencil $|-K_X|$ that contains~$\mathcal{P}$.  Then $\delta_{\mathcal{P}} (X)=\frac{3}{4}$.
 \end{lemma}
 \begin{proof}
Let $S$ be the minimal resolution of singularities.  Then $S$ is a weak del Pezzo surface of degree $1$. Suppose $C$ is a strict transform of $\mathcal{C}$ on $S$ and $E$, $E_1$, $E_2$, $E_3$, $E_4$ and $E_5$ are the exceptional divisors with the intersection:
\begin{figure}[h!]
    \centering
\includegraphics[width=4cm]{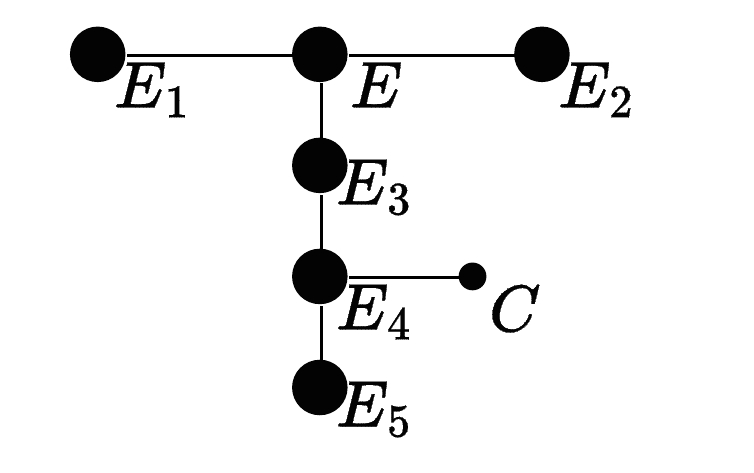}
    \caption{Dual graph: $(-K_S)^2=1$, $\mathbb{D}_6$ singularity}
\end{figure}
\par  
We have $-K_S\sim C+E_1+E_2+2E+2E_3+2E_4+E_5$. Let $P$ be a point on $S$.
\par{\bf Step 1.} Suppose $P\in E$.  Then $\tau(E)=2$ and the  Zariski decomposition of the divisor $-K_S-vE\sim C+E_1+E_2+(2-v)E+2E_3+2E_4+E_5$ is given by:
\begin{align*}
&&P(v)=
-K_S-vE-\frac{v}{4} (2E_1+2E_2+3E_3+2E_4+E_5)\text{ if }v\in[0,2].
\\&&
N(v)=
\frac{v}{4} (2E_1+2E_2+3E_3+2E_4+E_5)\text{ if }v\in[0,2].
\end{align*}
Moreover, 
$$(P(v))^2=\frac{(2-v)(2+v)}{4}
P(v)\cdot E=\frac{v}{4}\text{ and }\text{ if }v\in[0,2].$$
We have
$S_{S} (E)=\frac{4}{3}$. Thus, $\delta_P(S)\le \frac{3}{4}$ for $P\in E$. Moreover,  for  such points we have 
$h(v)\le \frac{7v^2}{32}\text{ if }v\in[0,2].$
Thus,
$S(W_{\bullet,\bullet}^{E};P)\le\frac{7}{6}<\frac{4}{3}$.
We get $\delta_P(S)=\frac{3}{4}$ for $P\in E$. 
\par{\bf Step 2.} Suppose $P\in  E_1\cup E_2$. Without loss of generality we can assume that $P\in E_1$ since the proof
is similar in other cases.  There exist $(-1)$-curves and $(-2)$-curves  which form one of the following dual graphs:
\begin{figure}[h!]
    \centering
\hspace*{-1cm}\includegraphics[width=18cm]{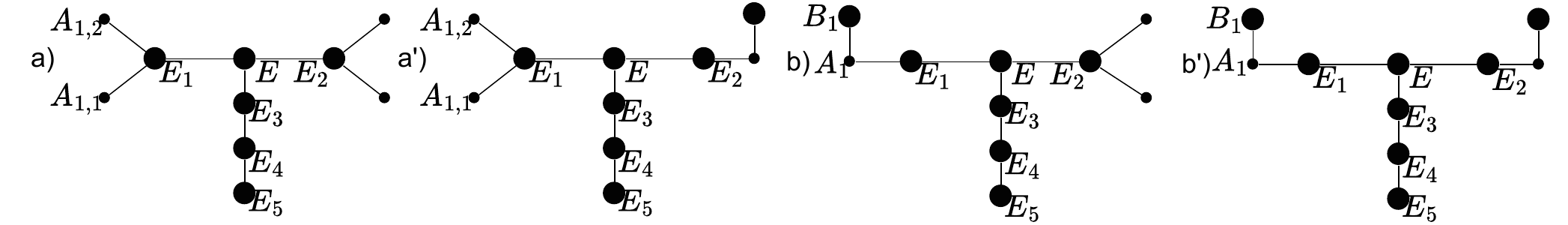}
    \caption{Dual graph: $(-K_S)^2=1$, $\mathbb{D}_6$ singularity, $\delta_P(S)=\frac{6}{5}$}
\end{figure}
\par Then $\tau(E_1)=\frac{3}{2}$ and the  Zariski Decomposition of the divisor $-K_S-vE_1$ is:
   { 
 {\allowdisplaybreaks\begin{align*}
\hspace*{-0.3cm}&{\text{\bf a, a'). }} & P(v)=\begin{cases}
-K_S-vE_1-\frac{v}{3} (4E+2E_2+3E_3+2E_4+E_5)\text{ if }v\in[0,1],\\
-K_S-vE_1-\frac{v}{3} (4E+2E_2+3E_3+2E_4+E_5)-(v-1)(A_{1,1}+A_{1,2})\text{ if }v\in\big[1,\frac{3}{2}\big].
\end{cases} \\
\hspace*{-0.3cm}&&N(v)=\begin{cases}
\frac{v}{3} (4E+2E_2+3E_3+2E_4+E_5)\text{ if }v\in[0,1],\\
\frac{v}{3} (4E+2E_2+3E_3+2E_4+E_5)+(v-1)(A_{1,1}+A_{1,2})\text{ if }v\in\big[1,\frac{3}{2}\big].
\end{cases}\\
\hspace*{-0.3cm}&{\text{\bf b, b'). }} & P(v)=\begin{cases}
-K_S-vE_1-\frac{v}{3} (4E+2E_2+3E_3+2E_4+E_5)\text{ if }v\in[0,1],\\
-K_S-vE_1-\frac{v}{3} (4E+2E_2+3E_3+2E_4+E_5)-(v-1)(2A_{1,1}+B_1)\text{ if }v\in\big[1,\frac{3}{2}\big].
\end{cases} \\
\hspace*{-0.3cm}&&N(v)=\begin{cases}
\frac{v}{3} (4E+2E_2+3E_3+2E_4+E_5)\text{ if }v\in[0,1],\\
\frac{v}{3} (4E+2E_2+3E_3+2E_4+E_5)+(v-1)(2A_{1,1}+B_1)\text{ if }v\in\big[1,\frac{3}{2}\big].
\end{cases}
\end{align*}}}
The Zariski Decomposition in part a). follows from 
 $$-K_S-vE_1\sim_{\DR} \Big(\frac{3}{2}-v\Big)E_1+\frac{1}{2}\Big(4E+2E_2+3E_3+2E_4+E_5+A_{1,1}+A_{1,2}\Big).$$
 A similar statement holds in other parts.
Moreover, 
$$(P(v))^2=\begin{cases}
1-\frac{2v^2}{3}  \text{ if }v\in[0,1],\\
\frac{(3-2v)^2}{3}  \text{ if }v\in\big [1, \frac{3}{2}\big ].
\end{cases}
P(v)\cdot E_1=\begin{cases}
\frac{2v}{3}  \text{ if }v\in[0,1],\\
2(1-\frac{2v}{3})  \text{ if }v\in\big [1, \frac{3}{2}\big ].
\end{cases}$$
We have $S_{S} (E_1)=\frac{5}{6}$.
Thus, $\delta_P(S)\le \frac{6}{5}$ for $P\in E_1$. Moreover, if $P\in E_1\backslash E$  we have 
$$h(v)\le\begin{cases}
\frac{2v^2}{9} \text{ if }v\in [0,1],\\
\frac{2(2v - 3)(4v - 3)}{9} \text{ if }v\in\big [1, \frac{3}{2}\big ].
\end{cases}
$$
Thus,
$S(W_{\bullet,\bullet}^{E_1};P)\le\frac{1}{3}<\frac{5}{6}$.
We get $\delta_P(S)=\frac{6}{5}$ for $P\in (E_1\cup E_2)\backslash E$.
\par{\bf Step 3.} Suppose $P\in  E_3$. Then $\tau(E_3)=2$ and the  Zariski decomposition of the divisor  $-K_S-vE_3\sim C+E_1+E_2+2E+(2-v)E_3+2E_4+E_5$ is:
 {\allowdisplaybreaks\begin{align*}
&&P(v)=\begin{cases}
-K_S-vE_3-\frac{v}{6} (3E_1+3E_2+6E+4E_4+2E_5)\text{ if }v\in\big[0,\frac{3}{2}\big],\\
-K_S-vE_3-\frac{v}{2} (2E+E_1+E_2)-(v-1)(2E_4+E_5)-(2v-3)C\text{ if }v\in\big[\frac{3}{2},2\big].
\end{cases}\\ 
& & N(v)=\begin{cases}
\frac{v}{6} (3E_1+3E_2+6E+4E_4+2E_5)\text{ if }v\in\big[0,\frac{3}{2}\big],\\
\frac{v}{2} (2E+E_1+E_2)+(v-1)(2E_4+E_5)+(2v-3)C\text{ if }v\in\big[\frac{3}{2},2\big].
\end{cases}  
\end{align*}}
Moreover, 
$$(P(v))^2=\begin{cases}
1-\frac{v^2}{3}  \text{ if }v\in\big [0,\frac{3}{2}\big ],\\
(2-v)^2  \text{ if }v\in\big[\frac{3}{2},2\big].
\end{cases}
P(v)\cdot E_3=\begin{cases}
\frac{v}{3}  \text{ if }v\in\big [0,\frac{3}{2}\big ],\\
2-v  \text{ if }v\in\big[\frac{3}{2},2\big].
\end{cases}$$
Now we apply the computation from Section \ref{dP1-D5} (Step 1.) and get that $\delta_P(S)=\frac{6}{7}$ for $P\in E_3\backslash E$.
\par{\bf Step 4.} Suppose $P\in  E_4$. Then $\tau(E_4)=2$ and the  Zariski decomposition of the divisor  $-K_S-vE_4\sim C+E_1+E_2+2E+2E_3+(2-v)E_4+E_5$ is:
 {\allowdisplaybreaks\begin{align*}
&& P(v)=\begin{cases}
-K_S-vE_4-\frac{v}{2} (2E_3+2E+E_1+E_2+E_5)\text{ if }v\in[0,1],\\
-K_S-vE_4-\frac{v}{2} (2E_3+2E+E_1+E_2+E_5)-(v-1)C\text{ if }v\in[1,2].
\end{cases} \\
&&\text{}N(v)=\begin{cases}
\frac{v}{2} (2E_3+2E+E_1+E_2+E_5)\text{ if }v\in[0,1],\\
\frac{v}{2} (2E_3+2E+E_1+E_2+E_5)+(v-1)C\text{ if }v\in[1,2].
\end{cases}
\end{align*}}
Moreover, 
$$(P(v))^2=\begin{cases}
1-\frac{v^2}{2}  \text{ if }v\in[0,1],\\
\frac{(2-v)^2}{2}  \text{ if }v\in[1,2].
\end{cases}
P(v)\cdot E_4=\begin{cases}
\frac{v}{2}  \text{ if }v\in[0,1],\\
1-\frac{v}{2}  \text{ if }v\in[1,2].
\end{cases}$$
Now we apply the computation from Section \ref{dP1-D4} (Step 1.) and get $\delta_P(S)=1$ for $P\in E_4\backslash E_3$.
\par{\bf Step 5.} Suppose $P\in E_5$. Then $\tau(E_5)=1$ and the  Zariski decomposition of the divisor  $-K_S-vE_5\sim C+E_1+E_2+2E+2E_3+2E_4+(1-v)E_5$ is given by:
\begin{align*}
&&P(v)=
-K_S-vE_5-\frac{v}{2} (2E_4+2E_3+2E+E_1+E_2)\text{ if }v\in[0,1].
\\&&
N(v)=
\frac{v}{2} (2E_4+2E_3+2E+E_1+E_2)\text{ if }v\in[0,1].
\end{align*}
Moreover, 
$$(P(v))^2=(1-v)(1+v)\text{ and }
P(v)\cdot E_5=v\text{ if }v\in[0,1].$$
Now we apply the computation from Section \ref{dP1-D4} (Step 2.) and get that $\delta_P(S)=\frac{3}{2}$ for $P\in E_5\backslash E_4$. 
\\Thus, $\delta_{\mathcal{P}} (X)=\frac{3}{4}$.
\end{proof}

\subsubsection{$\mathbb{D}_7$ singularity on Du Val Del Pezzo surfaces of degree $1$}\label{dP1-D7}
 \begin{lemma} Let $X$ be a singular del Pezzo surface of degree $1$ with an $\mathbb{D}_7$ singularity at point $\mathcal{P}$.  Let $\mathcal{C}$ be a~curve in the~pencil $|-K_X|$ that contains~$\mathcal{P}$.  Then $\delta_{\mathcal{P}} (X)=\frac{2}{3}$.
 \end{lemma}
 \begin{proof}
Let $S$ be the minimal resolution of singularities.  Then $S$ is a weak del Pezzo surface of degree $1$. Suppose $C$ is a strict transform of $\mathcal{C}$ on $S$ and $E$, $E_1$, $E_2$, $E_3$, $E_4$, $E_5$ and $E_6$ are the exceptional divisors with the intersection:
\begin{figure}[h!]
    \centering
 \includegraphics[width=6cm]{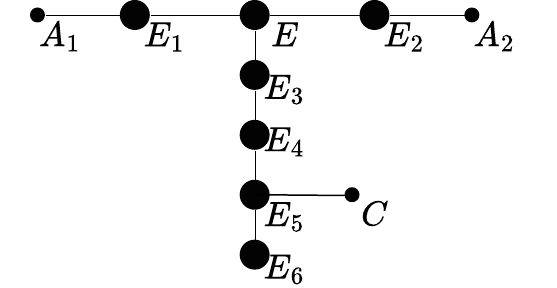}
    \caption{Dual graph: $(-K_S)^2=1$, $\mathbb{D}_7$ singularity}
\end{figure}
\par
We have $-K_S\sim C+E_1+E_2+2E+2E_3+2E_4+2E_5+E_6$. Let $P$ be a point on $S$.
 \par{\bf Step 1.} Suppose $P\in  E$.  If we contract the curve $C$  the resulting surface is isomorphic to a weak del Pezzo surface of degree $2$  with at most Du Val singularities.  Thus, there exist $(-1)$-curves  $A_1$ and $A_2$ which form the dual graph above with the rest of the curves.
Then $\tau(E)=\frac{5}{2}$ and the  Zariski decomposition of the divisor  $-K_S-vE$ is:
{  {\allowdisplaybreaks\begin{align*}
\hspace*{-0.5cm}&& P(v)=\begin{cases}
-K_S-vE-\frac{v}{2} (E_1+E_2)-\frac{v}{5} (4E_3+3E_4+2E_5+E_6)\text{ if }v\in[0,2],\\
-K_S-vE-(v-1)(E_1+E_2)-\frac{v}{5} (4E_3+3E_4+2E_5+E_6)-(v-2)(A_1+A_2)\text{ if }v\in\big[2,\frac{5}{2}\big].
\end{cases} \\
\hspace*{-0.5cm}&&\text{}N(v)=\begin{cases}
\frac{v}{2} (E_1+E_2)+\frac{v}{5} (4E_3+3E_4+2E_5+E_6)\text{ if }v\in[0,2],\\
(v-1)(E_1+E_2)+\frac{v}{5} (4E_3+3E_4+2E_5+E_6)+(v-2)(A_1+A_2)\text{ if }v\in\big[2,\frac{5}{2}\big].
\end{cases}
\end{align*}}}
The Zariski Decomposition follows from 
 $$-K_S-vE\sim_{\DR} \Big(\frac{5}{2}-v\Big)E+\frac{1}{2}\Big(3E_1+3E_2+4E_3+3E_4+2E_5+E_6+A_1+A_2\Big).$$
Moreover, 
$$(P(v))^2=\begin{cases}
1-\frac{v^2}{5}  \text{ if }v\in[0,2],\\
\frac{(5-2v)^2}{5}  \text{ if }v\in\big [2,\frac{5}{2}\big ].
\end{cases}
P(v)\cdot E=\begin{cases}
\frac{v}{5}  \text{ if }v\in[0,2],\\
2(1-\frac{2v}{5})  \text{ if }v\in\big [2,\frac{5}{2}\big ].
\end{cases}$$
We have
$S_{S} (E)=\frac{3}{2}$.
Thus, $\delta_P(S)\le \frac{2}{3}$ for $P\in E$. Moreover, if $P\in E\cap E_3$ if $P\in E\backslash E_3$   we have 
$$
h(v)\le\begin{cases}
\frac{9v^2}{50} \text{ if }v\in \big [0, 2\big ],\\
\frac{2(5 - 2v)(2v + 5)}{25} \text{ if }v\in\big [2,\frac{5}{2}\big ].
\end{cases}
\text{ or }
h(v)\le\begin{cases}
\frac{3v^2}{25} \text{ if }v\in \big [0, 2\big ],\\
\frac{6v(5 - 2v)}{25} \text{ if }v\in\big [2,\frac{5}{2}\big ].
\end{cases}
$$
Thus
$S(W_{\bullet,\bullet}^{E};P)\le \frac{4}{3}<\frac{3}{2}$
or
$S(W_{\bullet,\bullet}^{E};P)\le \frac{9}{10}<\frac{3}{2}$.
We get $\delta_P(S)=\frac{2}{3}$ for $P\in E$.
\par{\bf Step 2.} Suppose $P\in  E_1\cup E_2$. ithout loss of generality we can assume that $P\in E_1$ since the proof is similar
in other cases. Then $\tau(E_1)=\frac{3}{2}$ and the  Zariski decomposition of the divisor  $-K_S-vE_3$ is the following:
{\small  
 {\allowdisplaybreaks\begin{align*}
    \hspace*{-0.5cm}&&P(v)=\begin{cases}
-K_S-vE_1-\frac{v}{7} (10E+5E_2+8E_3+6E_4+4E_5+2E_6)\text{ if }v\in[0,1],\\
-K_S-vE_1-\frac{v}{7} (10E+5E_2+8E_3+6E_4+4E_5+2E_6)-(v-1)A_1\text{ if }v\in\big[1,\frac{7}{5}\big],\\
-K_S-vE_1-(v-1)(10E+8E_3+6E_4+4E_5+2E_6+A_1)-(5v-6)E_2-(5v-7)A_2\text{ if }v\in\big[\frac{7}{5},\frac{3}{2}\big].
\end{cases}\\
\hspace*{-0.5cm}&&N(v)=\begin{cases}
\frac{v}{7} (10E+5E_2+8E_3+6E_4+4E_5+2E_6)\text{ if }v\in[0,1],\\
\frac{v}{7} (10E+5E_2+8E_3+6E_4+4E_5+2E_6)+(v-1)A_1\text{ if }v\in\big[1,\frac{7}{5}\big],\\
(v-1)(10E+8E_3+6E_4+4E_5+2E_6+A_1)+(5v-6)E_2+(5v-7)A_2\text{ if }v\in\big[\frac{7}{5},\frac{3}{2}\big].
\end{cases}
\end{align*}}}
Then $\tau(E_1)=\frac{3}{2}$ and the  Zariski Decomposition follows from 
 $$-K_S-vE_1\sim_{\DR} \Big(\frac{3}{2}-v\Big)E_1+\frac{1}{2}\Big(2A_2+3E_2+5E+4E_3+3E_4+2E_5+E_6+A_1\Big).$$
Moreover, 
$$(P(v))^2=\begin{cases}
1-\frac{4v^2}{7}  \text{ if }v\in[0,1],\\
2 - 2v + \frac{3v^2}{7}\text{ if }v\in\big[1,\frac{7}{5}\big],\\
(3-2v)^2  \text{ if }v\in\big [\frac{7}{5}, \frac{3}{2}\big ].
\end{cases}
P(v)\cdot E_1=\begin{cases}
\frac{4v}{7}  \text{ if }v\in[0,1],\\
1-\frac{3v}{7}\text{ if }v\in\big[1,\frac{7}{5}\big],\\
2(3-2v)  \text{ if }v\in\big [\frac{7}{5}, \frac{3}{2}\big ].
\end{cases}$$
We have
$S_{S} (E_3)=\frac{9}{10}$.
Thus, $\delta_P(S)\le \frac{10}{9}$ for $P\in E_1$. Moreover, if $P\in E_1\backslash E$   we have 
 $$h(v)\le\begin{cases}
\frac{8v^2}{49}  \text{ if }v\in[0,1],\\
\frac{(7 - 3v)(11v - 7)}{98}\text{ if }v\in\big[1,\frac{7}{5}\big],\\
2(3 - 2v )(2 - v)  \text{ if }v\in\big [\frac{7}{5}, \frac{3}{2}\big ].
\end{cases}
$$
Thus,
$S(W_{\bullet,\bullet}^{E_1};P)\le \frac{3}{10}<\frac{9}{10}$.
We get $\delta_P(S)=\frac{10}{9}$ for $P\in (E_1\cup E_2)\backslash E$.
\par{\bf Step 3.} Suppose $P\in E_3$. Then $\tau(E_3)=2$ and the Zariski decomposition of the divisor $-K_S-vE_3\sim C+E_1+E_2+2E+(2-v)E_3+2E_4+2E_5+E_6$ is given by:
 {\allowdisplaybreaks\begin{align*}
&&P(v)=
-K_S-vE_3-\frac{v}{4} (2E_1+2E_2+4E+3E_4+2E_5+E_6)\text{ if }v\in[0,2].\\
&&
N(v)=
\frac{v}{4} (2E_1+2E_2+4E+3E_4+2E_5+E_6)\text{ if }v\in[0,2].
\end{align*}}
Moreover, 
$$(P(v))^2=\frac{(2-v)(2+v)}{4}
\text{ and }P(v)\cdot E_3=\frac{v}{4}\text{ if }v\in[0,2].$$
Now we apply the computation from Section \ref{dP1-D6} (Step 1.) and get that $\delta_P(S)=\frac{3}{4}$ for $P\in E_3\backslash E$. 
\\
{\bf Step 4.} Suppose $P\in  E_4$. Then $\tau(E_4)=2$ and the  Zariski decomposition of the divisor $-K_S-vE_4\sim C+E_1+E_2+2E+2E_3+(2-v)E_4+2E_5+E_6$ is:
 {\allowdisplaybreaks\begin{align*}
&&P(v)=\begin{cases}
-K_S-vE_4-\frac{v}{6} (3E_1+3E_2+6E+6E_3+4E_5+2E_6)\text{ if }v\in\big[0,\frac{3}{2}\big],\\
-K_S-vE_4-\frac{v}{2} (E_1+E_2+2E+2E_3)-(v-1)(2E_5+E_6)-(2v-3)C\text{ if }v\in\big[\frac{3}{2},2\big].
\end{cases}\\ 
&&N(v)=\begin{cases}
\frac{v}{6} (3E_1+3E_2+6E+6E_3+4E_5+2E_6)\text{ if }v\in\big[0,\frac{3}{2}\big],\\
\frac{v}{2} (E_1+E_2+2E+2E_3)+(v-1)(2E_5+E_6)+(2v-3)C\text{ if }v\in\big[\frac{3}{2},2\big].
\end{cases}  
\end{align*}}
Moreover, 
$$(P(v))^2=\begin{cases}
1-\frac{v^2}{3}  \text{ if }v\in\big [0,\frac{3}{2}\big ],\\
(2-v)^2  \text{ if }v\in\big[\frac{3}{2},2\big].
\end{cases}
P(v)\cdot E_4=\begin{cases}
\frac{v}{3}  \text{ if }v\in\big [0,\frac{3}{2}\big ],\\
2-v  \text{ if }v\in\big[\frac{3}{2},2\big].
\end{cases}$$
Now we apply the computation from Section \ref{dP1-D5} (Step 1.) and get that $\delta_P(S)=\frac{6}{7}$ for $P\in E_4\backslash E_3$.
\par{\bf Step 5.} Suppose $P\in  E_5$. Then $\tau(E_5)=2$ and the  Zariski decomposition of the divisor $-K_S-vE_5\sim C+E_1+E_2+2E+2E_3+2E_4+(2-v)E_5+E_6$ is:
 {\allowdisplaybreaks\begin{align*}
&& P(v)=\begin{cases}
-K_S-vE_5-\frac{v}{2} (2E_4+2E_3+2E+E_1+E_2+E_6)\text{ if }v\in[0,1],\\
-K_S-vE_5-\frac{v}{2} (2E_4+2E_3+2E+E_1+E_2+E_6)-(v-1)C\text{ if }v\in[1,2].
\end{cases} \\
&&\text{} N(v)=\begin{cases}
\frac{v}{2} (2E_4+2E_3+2E+E_1+E_2+E_6)\text{ if }v\in[0,1],\\
\frac{v}{2} (2E_4+2E_3+2E+E_1+E_2+E_6)+(v-1)C\text{ if }v\in[1,2].
\end{cases}
\end{align*}}
Moreover, 
$$(P(v))^2=\begin{cases}
1-\frac{v^2}{2}  \text{ if }v\in[0,1],\\
\frac{(2-v)^2}{2}  \text{ if }v\in[1,2].
\end{cases}
P(v)\cdot E_5=\begin{cases}
\frac{v}{2}  \text{ if }v\in[0,1],\\
1-\frac{v}{2}  \text{ if }v\in[1,2].
\end{cases}$$
Now we apply the computation from Section \ref{dP1-D4} (Step 1.) and get that $\delta_P(S)=1$ for $P\in E_5\backslash E_4$.
\par{\bf Step 6.} Suppose $P\in E_6$. Then $\tau(E_6)=1$ and the  Zariski decomposition of the divisor $-K_S-vE_6\sim C+E_1+E_2+2E+2E_3+2E_4+2E_5+(1-v)E_6$ is given by:
 {\allowdisplaybreaks\begin{align*}
    && P(v)=
-K_S-vE_6-\frac{v}{2} (2E_5+2E_4+2E_3+2E+E_1+E_2)\text{ if }v\in[0,1].\\
&&
N(v)=
\frac{v}{2} (2E_5+2E_4+2E_3+2E+E_1+E_2)\text{ if }v\in[0,1].
\end{align*}}
Moreover, 
$$(P(v))^2=(1-v)(1+v)\text{ and }
P(v)\cdot E_6=v\text{ if }v\in[0,1].$$
Now we apply the computation from Section \ref{dP1-D4} (Step 2.) and get that $\delta_P(S)=\frac{3}{2}$ for $P\in E_6\backslash E_5$. Thus, $\delta_{\mathcal{P}} (X)=\frac{2}{3}$.
 \end{proof}
 \subsubsection{$\mathbb{D}_8$ singularity on Du Val Del Pezzo surfaces of degree $1$}\label{dP1-D8}
 \begin{lemma} Let $X$ be a singular del Pezzo surface of degree $1$ with an $\mathbb{D}_8$ singularity at point $\mathcal{P}$.  Let $\mathcal{C}$ be a~curve in the~pencil $|-K_X|$ that contains~$\mathcal{P}$.  Then $\delta_{\mathcal{P}} (X)=\frac{3}{5}$.
 \end{lemma}
 \begin{proof}
Let $S$ be the minimal resolution of singularities.  Then $S$ is a weak del Pezzo surface of degree $1$. Suppose $C$ is a strict transform of $\mathcal{C}$ on $S$ and $E$, $E_1$, $E_2$, $E_3$, $E_4$, $E_5$, $E_6$ and $E_7$ are the exceptional divisors with the intersection:
\begin{figure}[h!]
    \centering
 \includegraphics[width=5cm]{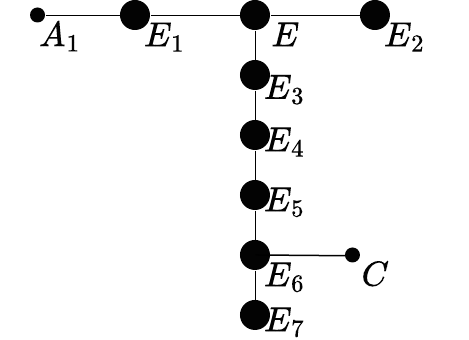}
    \caption{Dual graph: $(-K_S)^2=1$, $\mathbb{D}_8$ singularity}
\end{figure}
\par
 We have $-K_S\sim C+E_1+E_2+2E+2E_3+2E_4+2E_5+2E_6+E_7$. Let $P$ be a point on $S$.
 \par{\bf Step 1.} Suppose $P\in  E$.There exist a $(-1)$-curve $A_1$  which form the dual graph above with the rest of the curves.
Then the corresponding Zariski Decomposition of the divisor $-K_S-vE$ is:
 {\allowdisplaybreaks\begin{align*}
\hspace*{-0.5cm}&& P(v)=\begin{cases}
-K_S-vE-\frac{v}{6} (3E_1+3E_2+5E_3+4E_4+3E_5+2E_6+E_7)\text{ if }v\in[0,2],\\
-K_S-vE-(v-1)E_1-\frac{v}{6} (3E_2+5E_3+4E_4+3E_5+2E_6+E_7)-(v-2)A_1\text{ if }v\in[2,3].
\end{cases} \\
\hspace*{-0.5cm}&&N(v)=\begin{cases}
\frac{v}{6} (3E_1+3E_2+5E_3+4E_4+3E_5+2E_6+E_7)\text{ if }v\in[0,2],\\
(v-1)E_1+\frac{v}{6} (3E_2+5E_3+4E_4+3E_5+2E_6+E_7)+(v-2)A_1\text{ if }v\in[2,3].
\end{cases}
\end{align*}}
Then $\tau(E)=3$ and the  Zariski Decomposition follows from 
 $$-K_S-vE\sim_{\DR} (3-v)E+\frac{1}{2}\Big(4E_1+3E_2+5E_3+4E_4+3E_5+2E_6+E_7+2A_1\Big).$$
Moreover, 
$$(P(v))^2=\begin{cases}
1-\frac{v^2}{6}  \text{ if }v\in[0,2],\\
\frac{(3-v)^2}{3}  \text{ if }v\in[2,3].
\end{cases}
P(v)\cdot E=\begin{cases}
\frac{v}{6}  \text{ if }v\in[0,2],\\
1-\frac{v}{6}  \text{ if }v\in[2,3].
\end{cases}$$
We have
$S_{S} (E)=\frac{5}{3}$.
Thus, $\delta_P(S)\le \frac{3}{5}$ for $P\in E$. Moreover, if $P\in E\cap E_1$ if $P\in E\backslash E_1$   we have 
$$
h(v)\le\begin{cases}
\frac{7v^2}{72} \text{ if }v\in \big [0, 2\big ],\\
\frac{(3-v)(5v-3)}{18} \text{ if }v\in [2,3].
\end{cases}
\text{ or }
h(v)\le\begin{cases}
\frac{11v^2}{72} \text{ if }v\in \big [0, 2\big ],\\
\frac{(3-v)(4v+3)}{18} \text{ if }v\in [2,3].
\end{cases}
$$
Thus,
$S(W_{\bullet,\bullet}^{E};P)\le 1<\frac{5}{3}$
or
$S(W_{\bullet,\bullet}^{E};P)\le \frac{3}{2}<\frac{5}{3}$.
We get $\delta_P(S)=\frac{3}{5}$ for $P\in E$.
\par{\bf Step 2.} Suppose $P\in  E_1$. Then $\tau(E_1)=2$ and the  Zariski decomposition of the divisor $-K_S-vE_1$ is:
 {\allowdisplaybreaks\begin{align*}
\hspace*{-0.5cm}&& P(v)=\begin{cases}
-K_S-vE_1-\frac{v}{4} (6E+3E_2+5E_3+4E_4+3E_5+2E_6+E_7)\text{ if }v\in[0,1],\\
-K_S-vE_1-\frac{v}{4} (6E+3E_2+5E_3+4E_4+3E_5+2E_6+E_7)-(v-1)A_1\text{ if }v\in[1,2].
\end{cases} \\
\hspace*{-0.5cm}&& N(v)=\begin{cases}
\frac{v}{4} (6E+3E_2+5E_3+4E_4+3E_5+2E_6+E_7)\text{ if }v\in[0,1],\\
\frac{v}{4} (6E+3E_2+5E_3+4E_4+3E_5+2E_6+E_7)+(v-1)A_1\text{ if }v\in[1,2].
\end{cases}
\end{align*}}
The Zariski Decomposition follows from 
 $$-K_S-vE_1\sim_{\DR} (2-v)E_1+\frac{1}{2}\Big(6E+3E_2+5E_3+4E_4+3E_5+2E_6+E_7+2A_1\Big).$$
Moreover, 
$$(P(v))^2=\begin{cases}
1-\frac{v^2}{2}  \text{ if }v\in[0,1],\\
\frac{(2-v)^2}{2}  \text{ if }v\in[1,2].
\end{cases}
P(v)\cdot E_1=\begin{cases}
\frac{v}{2}  \text{ if }v\in[0,1],\\
1-\frac{v}{2}  \text{ if }v\in[1,2].
\end{cases}$$
Now we apply the computation from Section \ref{dP1-D4} (Step 1.) and get $\delta_P(S)=1$ for $P\in E_1\backslash E$.
\par{\bf Step 3.} Suppose $P\in  E_2$. Then $\tau(E_2)=\frac{3}{2}$ and the  Zariski decomposition of the divisor $-K_S-vE_2$ is the following:
{ 
 {\allowdisplaybreaks\begin{align*}
\hspace*{-1cm}&& P(v)=\begin{cases}
-K_S-vE_2-\frac{v}{4} (6E+3E_1+5E_3+4E_4+3E_5+2E_6+E_7)\text{ if }v\in\big[0,\frac{4}{3}\big],\\
-K_S-vE_2-(v-1)(6E+5E_3+4E_4+3E_5+2E_6+E_7)-(6v-7)E_1-(6v-8)A_1\text{ if }v\in\big[\frac{4}{3},\frac{3}{2}\big].
\end{cases} \\
\hspace*{-1cm}&&N(v)=\begin{cases}
\frac{v}{4} (6E+3E_1+5E_3+4E_4+3E_5+2E_6+E_7)\text{ if }v\in\big[0,\frac{4}{3}\big],\\
(v-1)(6E+5E_3+4E_4+3E_5+2E_6+E_7)+(6v-7)E_1+(6v-8)A_1\text{ if }v\in\big[\frac{4}{3},\frac{3}{2}\big].
\end{cases}
\end{align*}}}
The Zariski Decomposition follows from 
 $$-K_S-vE_2\sim_{\DR} \Big(\frac{3}{2}-v\Big)E_2+\frac{1}{2}\Big(6E+5E_3+4E_4+3E_5+2E_6+E_7+ 4E_1+2A_1\Big).$$
Moreover, 
$$(P(v))^2=\begin{cases}
1-\frac{v^2}{2}  \text{ if }v\in\big[0,\frac{4}{3}\big],\\
(3-2v)^2 \text{ if }v\in\big[\frac{4}{3},\frac{3}{2}\big].
\end{cases}
P(v)\cdot E_2=\begin{cases}
\frac{v}{2}  \text{ if }v\in \big[0,\frac{4}{3}\big],\\
2(3-2v) \text{ if }v\in \big[\frac{4}{3},\frac{3}{2}\big].
\end{cases}$$
We have
$S_{S} (E_2)=\frac{17}{18}$. Thus, $\delta_P(S)\le \frac{18}{17}$ for $P\in E_2$. Moreover, if $P\in E_2\backslash E_1$   we have:
$$h(v)\le\begin{cases}
\frac{v^2}{8} \text{ if }v\in \big [0,\frac{4}{3}\big ],\\
2(3-2v)^2 \text{ if }v\in \big [\frac{4}{3},\frac{3}{2}\big ].
\end{cases}
$$
Thus,
$S(W_{\bullet,\bullet}^{E_2};P)\le  \frac{2}{9} <\frac{17}{18}$.
We get $\delta_P(S)=\frac{18}{17}$ for $P\in E_2\backslash E_1$.
 \par{\bf Step 4.} Suppose $P\in  E_3$. Then $\tau(E_3)=\frac{5}{2}$ and the   Zariski decomposition of the divisor $-K_S-vE_3$ is  the following:
{  {\allowdisplaybreaks\begin{align*}
\hspace*{-1cm}&& P(v)=\begin{cases}
-K_S-vE_3-\frac{v}{2} (2E+E_1+E_2)-\frac{v}{5} (4E_3+3E_4+2E_5+E_6)\text{ if }v\in[0,2],\\
-K_S-vE_3-(v-1)(2E+E_2)-\frac{v}{5} (4E_3+3E_4+2E_5+E_6)-(2v-3)E_1-(2v-4)A_1\text{ if }v\in\big[2,\frac{5}{2}\big].
\end{cases} \\
\hspace*{-1cm}&&N(v)=\begin{cases}
\frac{v}{2} (2E+E_1+E_2)+\frac{v}{5} (4E_3+3E_4+2E_5+E_6)\text{ if }v\in[0,2],\\
(v-1)(2E+E_2)+\frac{v}{5} (4E_3+3E_4+2E_5+E_6)+(2v-3)E_1+(2v-4)A_1\text{ if }v\in\big[2,\frac{5}{2}\big].
\end{cases}
\end{align*}}}
The Zariski Decomposition follows from 
 $$-K_S-vE_3\sim_{\DR} \Big(\frac{5}{2}-v\Big)E_3+\frac{1}{2}\Big(6E+3E_2+4E_3+3E_4+2E_5+E_6+4E_1+2A_1\Big).$$
Moreover, 
$$(P(v))^2=\begin{cases}
1-\frac{v^2}{5}  \text{ if }v\in[0,2],\\
\frac{(5-2v)^2}{5}  \text{ if }v\in\big [2,\frac{5}{2}\big ].
\end{cases}
P(v)\cdot E_3=\begin{cases}
\frac{v}{5}  \text{ if }v\in[0,2],\\
2(1-\frac{2v}{5})  \text{ if }v\in\big [2,\frac{5}{2}\big ].
\end{cases}$$
Now we apply the computation from Section \ref{dP1-D7} (Step 1.) and get that $\delta_P(S)=\frac{2}{3}$ for $P\in E_3\backslash E$. 
\par{\bf Step 5.} Suppose $P\in E_4$. Then $\tau(E_4)=2$ and the  Zariski decomposition of the divisor $-K_S-vE_4\sim C+E_1+E_2+2E+2E_3+(2-v)E_4+2E_5+2E_6+E_7$ is given by:
 {\allowdisplaybreaks\begin{align*}
&&P(v)=
-K_S-vE_4-\frac{v}{4} (2E_1+2E_2+4E+4E_3+3E_4+2E_5+E_6)\text{ if }v\in[0,2].\\
&&
N(v)=
\frac{v}{4} (2E_1+2E_2+4E+4E_3+3E_4+2E_5+E_6)\text{ if }v\in[0,2].
\end{align*}}
Moreover, 
$$(P(v))^2=\frac{(2-v)(2+v)}{4}
\text{ and }P(v)\cdot E_4=\frac{v}{4}\text{ if }v\in[0,2].$$
Now we apply the computation from Section \ref{dP1-D6} (Step 1.) and get that $\delta_P(S)=\frac{3}{4}$ for $P\in E_4\backslash E_3$. 
\\
{\bf Step 6.} Suppose $P\in  E_5$. Then $\tau(E_5)=2$ and the  Zariski decomposition of the divisor $-K_S-vE_5\sim C+E_1+E_2+2E+2E_3+2E_4+(2-v)E_5+2E_6+E_7$ is:
 {\allowdisplaybreaks\begin{align*}
\hspace*{-0.5cm}&&P(v)=\begin{cases}
-K_S-vE_5-\frac{v}{6} (3E_1+3E_2+6E+6E_3+6E_4+4E_6+2E_7)\text{ if }v\in\big[0,\frac{3}{2}\big],\\
-K_S-vE_5-\frac{v}{2} (E_1+E_2+2E+2E_3+2E_4)-(v-1)(2E_6+E_7)-(2v-3)C\text{ if }v\in\big[\frac{3}{2},2\big].
\end{cases}\\ 
\hspace*{-0.5cm}&& N(v)=\begin{cases}
\frac{v}{6} (3E_1+3E_2+6E+6E_3+6E_4+4E_6+2E_7)\text{ if }v\in\big[0,\frac{3}{2}\big],\\
\frac{v}{2} (E_1+E_2+2E+2E_3+2E_4)+(v-1)(2E_6+E_7)+(2v-3)C\text{ if }v\in\big[\frac{3}{2},2\big].
\end{cases}  
\end{align*}}
Moreover, 
$$(P(v))^2=\begin{cases}
1-\frac{v^2}{3}  \text{ if }v\in\big [0,\frac{3}{2}\big ],\\
(2-v)^2  \text{ if }v\in\big[\frac{3}{2},2\big].
\end{cases}
P(v)\cdot E_5=\begin{cases}
\frac{v}{3}  \text{ if }v\in\big [0,\frac{3}{2}\big ],\\
2-v  \text{ if }v\in\big[\frac{3}{2},2\big].
\end{cases}$$
Now we apply the computation from Section \ref{dP1-D5} (Step 1.) and get that $\delta_P(S)=\frac{6}{7}$ for $P\in E_5\backslash E_4$.
\par{\bf Step 7.} Suppose $P\in  E_6$. Then $\tau(E_6)=2$ and the  Zariski decomposition of the divisor $-K_S-vE_6\sim C+E_1+E_2+2E+2E_3+2E_4+2E_5+(2-v)E_6+E_7$ is:
 {\allowdisplaybreaks\begin{align*}
&& P(v)=\begin{cases}
-K_S-vE_6-\frac{v}{2} (2E_5+2E_4+2E_3+2E+E_1+E_2+E_6)\text{ if }v\in[0,1],\\
-K_S-vE_6-\frac{v}{2} (2E_5+2E_4+2E_3+2E+E_1+E_2+E_6)-(v-1)C\text{ if }v\in[1,2].
\end{cases} \\
&&\text{} N(v)=\begin{cases}
\frac{v}{2} (2E_5+2E_4+2E_3+2E+E_1+E_2+E_6)\text{ if }v\in[0,1],\\
\frac{v}{2} (2E_5+2E_4+2E_3+2E+E_1+E_2+E_6)+(v-1)C\text{ if }v\in[1,2].
\end{cases}
\end{align*}}
Moreover, 
$$(P(v))^2=\begin{cases}
1-\frac{v^2}{2}  \text{ if }v\in[0,1],\\
\frac{(2-v)^2}{2}  \text{ if }v\in[1,2].
\end{cases}
P(v)\cdot E_6=\begin{cases}
\frac{v}{2}  \text{ if }v\in[0,1],\\
1-\frac{v}{2}  \text{ if }v\in[1,2].
\end{cases}$$
Now we apply the computation from Section \ref{dP1-D4} (Step 1.) and get that $\delta_P(S)=1$ for $P\in E_6\backslash E_5$.
\par{\bf Step 8.} Suppose $P\in E_7$. Then $\tau(E_7)=1$ and the  Zariski decomposition of the divisor $-K_S-vE_7\sim C+E_1+E_2+2E+2E_3+2E_4+2E_5+2E_6+(1-v)E_7$ is given by:
 {\allowdisplaybreaks\begin{align*}
   \hspace*{-0.5cm} && P(v)=
-K_S-vE_7-\frac{v}{2} (2E_6+2E_5+2E_4+2E_3+2E+E_1+E_2)\text{ if }v\in[0,1].\\
\hspace*{-0.5cm}&&
N(v)=
\frac{v}{2} (2E_6+2E_5+2E_4+2E_3+2E+E_1+E_2)\text{ if }v\in[0,1].
\end{align*}}
Moreover, 
$$(P(v))^2=(1-v)(1+v)\text{ and }
P(v)\cdot E_7=v\text{ if }v\in[0,1].$$
Now we apply the computation from Section \ref{dP1-D4} (Step 2.) and get that $\delta_P(S)=\frac{3}{2}$ for $P\in E_7\backslash E_6$. 
\\Thus, $\delta_{\mathcal{P}} (X)=\frac{3}{5}$.
\end{proof}
\subsubsection{$\mathbb{E}_6$ singularity on Du Val Del Pezzo surfaces of degree $1$}\label{dP1-E_6}
 \begin{lemma} Let $X$ be a singular del Pezzo surface of degree $1$ with an $\mathbb{E}_6$ singularity at point $\mathcal{P}$.  Let $\mathcal{C}$ be a~curve in the~pencil $|-K_X|$ that contains~$\mathcal{P}$.  Then $\delta_{\mathcal{P}} (X)=\frac{3}{5}$.
 \end{lemma}
 \begin{proof}
Let $S$ be the minimal resolution of singularities.  Then $S$ is a weak del Pezzo surface of degree $1$. Suppose $C$ is a strict transform of $\mathcal{C}$ on $S$ and $E$, $E_1$, $E_2$, $E_3$, $E_4$ and $E_5$ are the exceptional divisors with the intersection:
\begin{figure}[h!]
    \centering
 \includegraphics[width=6cm]{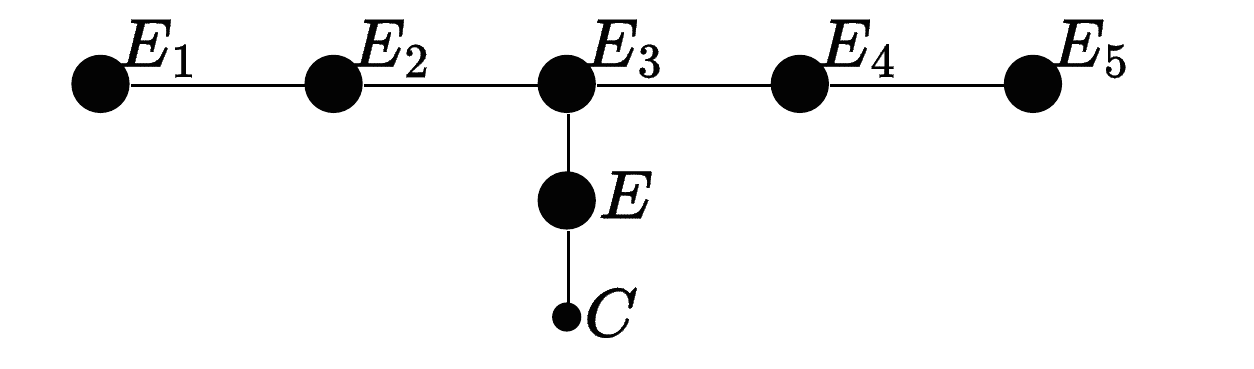}
    \caption{Dual graph: $(-K_S)^2=1$, $\mathbb{E}_6$ singularity}
\end{figure}
\par
We have $-K_S\sim C+E_1+2E_2+3E_3+2E_4+E_5+2E$. Let $P$ be a point on $S$.
  \par{\bf Step 1.} Suppose $P\in  E_3$. Then $\tau(E_3)=3$ and the   Zariski decomposition of the divisor $-K_S-vE_3\sim C+E_1+2E_2+(3-v)E_3+2E_4+E_5+2E$ is  the following:
 {\allowdisplaybreaks\begin{align*}
&& P(v)=\begin{cases}
-K_S-vE_3-\frac{v}{3} (E_1+2E_2+2E_4+E_5)-\frac{v}{2}E\text{ if }v\in[0,2],\\
-K_S-vE_3-\frac{v}{3} (E_1+2E_2+2E_4+E_5)-(v-1)E-(v-2)C\text{ if }v\in[2,3].
\end{cases} \\
&&\text{}N(v)=\begin{cases}
\frac{v}{3} (E_1+2E_2+2E_4+E_5)+\frac{v}{2}E\text{ if }v\in[0,2],\\
\frac{v}{3} (E_1+2E_2+2E_4+E_5)+(v-1)E+(v-2)C\text{ if }v\in[2,3].
\end{cases}
\end{align*}}
Moreover, 
$$(P(v))^2=\begin{cases}
1-\frac{v^2}{6}  \text{ if }v\in[0,2],\\
\frac{(3-v)^2}{3}  \text{ if }v\in[2,3].
\end{cases}
P(v)\cdot E_3=\begin{cases}
\frac{v}{6}  \text{ if }v\in[0,2],\\
1-\frac{v}{6}  \text{ if }v\in[2,3].
\end{cases}$$
Now we apply the computation from Section \ref{dP1-D8} (Step 1.) and get that $\delta_P(S)=\frac{3}{5}$ for $P\in E_3$. 
\par{\bf Step 2.} Suppose $P\in  E_2\cup E_4$. Without loss of generality we can assume that $P\in E_2$ since the proof
is similar in other cases. Then $\tau(E_2)=2$ and the  Zariski decomposition of the divisor $-K_S-vE_2\sim C+E_1+(2-v)E_2+3E_3+2E_4+E_5+2$ is:
  {\allowdisplaybreaks\begin{align*}
\hspace*{-0.5cm}&&P(v)=\begin{cases}
-K_S-vE_2-\frac{v}{2}E_1-\frac{v}{5} (3E+6E_3+4E_4+2E_5)\text{ if }v\in\big[0,\frac{5}{3}\big],\\
-K_S-vE_2-\frac{v}{2}E_1-(v-1)(3E_3+2E_4+E_5)-(3v-4)E-(3v-5)C
\text{ if }v\in\big[\frac{5}{3},2\big].
\end{cases}\\
\hspace*{-0.5cm}&&N(v)=\begin{cases}
\frac{v}{2}E_1+\frac{v}{5} (3E+6E_3+4E_4+2E_5)\text{ if }v\in\big[0,\frac{5}{3}\big],\\
\frac{v}{2}E_1+(v-1)(3E_3+2E_4+E_5)+(3v-4)E+(3v-5)C\text{ if }v\in\big[\frac{5}{3},2\big].
\end{cases}
\end{align*}}
Moreover, 
$$(P(v))^2=\begin{cases}
1-\frac{3v^2}{10}  \text{ if }v\in\big [0,\frac{5}{3}\big ],\\
\frac{3(2-v)^2}{2}  \text{ if }v\in\big[\frac{5}{3},2\big].
\end{cases}
P(v)\cdot E_2=\begin{cases}
\frac{3v}{10}  \text{ if }v\in\big [0,\frac{5}{3}\big ],\\
3(1-\frac{v}{2})  \text{ if }v\in\big[\frac{5}{3},2\big].
\end{cases}$$
We have $S_{S} (E_2)=\frac{11}{9}$. Thus, $\delta_P(S)\le \frac{9}{11}$ for $P\in E_2$. Moreover,  if $P\in E_2\backslash E_3$  we have 
$$h(v)\le\begin{cases}
\frac{39v^2}{200} \text{ if }v\in \big [0, \frac{5}{3}\big ],\\
\frac{3(v - 2)(v - 6)}{8}\text{ if }v\in\big[\frac{5}{3},2\big].
\end{cases}
$$
Thus,
$S(W_{\bullet,\bullet}^{E_2};P)\le  \frac{7}{9}< \frac{11}{9}$.
We get $\delta_P(S)=\frac{9}{11}$ for $P\in (E_2\cup E_4)\backslash E_3$.
\par{\bf Step 3.} Suppose $P\in  E_1\cup E_5$. Without loss of generality we can assume that
$P \in E_1$ since the proof is similar in other cases. There exist $(-1)$-curves and $(-2)$-curves  which form one of the following dual graphs:
\begin{figure}[h!]
    \centering
\includegraphics[width=15cm]{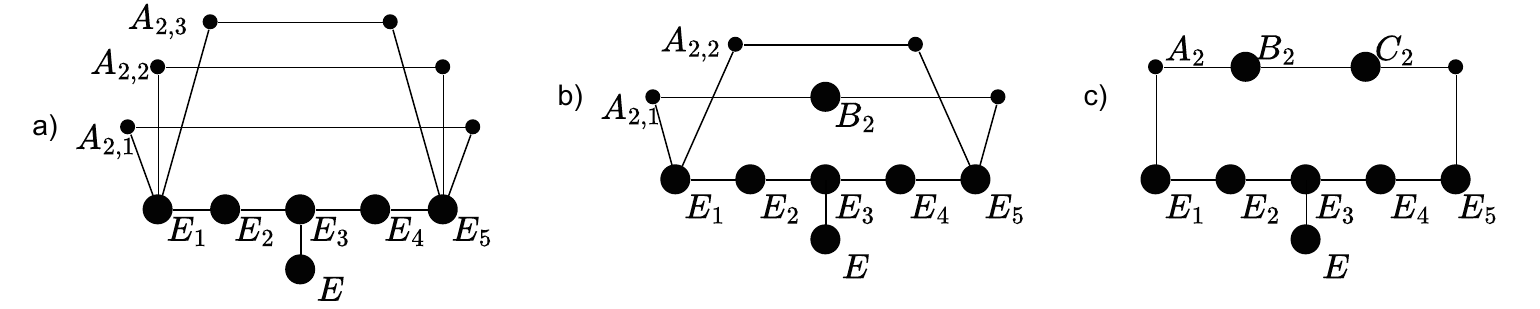}
    \caption{Dual graph: $(-K_S)^2=1$, $\mathbb{E}_6$ singularity, $\delta_P(S)=\frac{3}{5}$}
\end{figure}
\par Then the corresponding Zariski Decomposition of the divisor $-K_S-vE_1$ is:
   { 
 {\allowdisplaybreaks\begin{align*}
\hspace*{-0.7cm}&{\text{\bf a). }} & P(v)=\begin{cases}
-K_S-vE_1-\frac{v}{4} (5E_2+6E_3+4E_4+2E_5+3E)\text{ if }v\in[0,1],\\
-K_S-vE_1-\frac{v}{4} (5E_2+6E_3+4E_4+2E_5+3E)-(v-1)(A_{2,1}+A_{2,2}+A_{2,3})\text{ if }v\in\big[1,\frac{4}{3}\big].
\end{cases} \\
\hspace*{-0.7cm}&&N(v)=\begin{cases}
\frac{v}{4} (5E_2+6E_3+4E_4+2E_5+3E)\text{ if }v\in[0,1],\\
\frac{v}{4} (5E_2+6E_3+4E_4+2E_5+3E)+(v-1)(A_{2,1}+A_{2,2}+A_{2,3})\text{ if }v\in\big[1,\frac{4}{3}\big].
\end{cases}\\
\hspace*{-0.7cm}&{\text{\bf b). }} & P(v)=\begin{cases}
-K_S-vE_1-\frac{v}{4} (5E_2+6E_3+4E_4+2E_5+3E)\text{ if }v\in[0,1],\\
-K_S-vE_1-\frac{v}{4} (5E_2+6E_3+4E_4+2E_5+3E)-(v-1)(2A_{2,1}+B_{2,1}+A_{2,2})\text{ if }v\in\big[1,\frac{4}{3}\big].
\end{cases} \\
\hspace*{-0.7cm}&&N(v)=\begin{cases}
\frac{v}{4} (5E_2+6E_3+4E_4+2E_5+3E)\text{ if }v\in[0,1],\\
\frac{v}{4} (5E_2+6E_3+4E_4+2E_5+3E)+(v-1)(2A_{2,1}+B_{2,1}+A_{2,2})\text{ if }v\in\big[1,\frac{4}{3}\big].
\end{cases}\\
\hspace*{-0.7cm}&{\text{\bf c). }} & P(v)=\begin{cases}
-K_S-vE_1-\frac{v}{4} (5E_2+6E_3+4E_4+2E_5+3E)\text{ if }v\in[0,1],\\
-K_S-vE_1-\frac{v}{4} (5E_2+6E_3+4E_4+2E_5+3E)-(v-1)(3A_{2}+B_{2}+C_2)\text{ if }v\in\big[1,\frac{4}{3}\big].
\end{cases} \\
\hspace*{-0.7cm}&&N(v)=\begin{cases}
\frac{v}{4} (5E_2+6E_3+4E_4+2E_5+3E)\text{ if }v\in[0,1],\\
\frac{v}{4} (5E_2+6E_3+4E_4+2E_5+3E)+(v-1)(3A_{2}+B_{2}+C_2)\text{ if }v\in\big[1,\frac{4}{3}\big].
\end{cases}
\end{align*}}}
Then $\tau(E_1)=\frac{4}{3}$ and the  Zariski Decomposition in part a). follows from 
 $$-K_S-vE_1\sim_{\DR} \Big(\frac{4}{3}-v\Big)E_1+\frac{1}{3}\Big(5E_2+6E_3+4E_4+2E_5+3E+A_{2,1}+A_{2,2}+A_{2,3}\Big).$$
 A similar statement holds in other parts.
Moreover, 
$$(P(v))^2=\begin{cases}
1-\frac{3v^2}{4}  \text{ if }v\in[0,1],\\
\frac{(4-3v)^2}{4}  \text{ if }v\in\big [1, \frac{4}{3}\big ].
\end{cases}
P(v)\cdot E_1=\begin{cases}
\frac{3v}{4}  \text{ if }v\in[0,1],\\
3(1-\frac{3v}{4})  \text{ if }v\in\big [1, \frac{4}{3}\big ].
\end{cases}$$
We apply the computation from Section \ref{dP1-A5} (Step 2.) and get $\delta_P(S)=\frac{3}{5}$ if $P\in (E_1\cup E_5)\backslash (E_2\cup E_4)$.
\par{\bf Step 4.} Suppose $P\in  E$. Then $\tau(E)=2$ and the  Zariski decomposition of the divisor $-K_S-vE\sim C+E_1+2E_2+3E_3+2E_4+E_5+(2-v)E$ is:
 {\allowdisplaybreaks\begin{align*}
&& P(v)=\begin{cases}
-K_S-vE-\frac{v}{2} (E_1+2E_2+3E_3+2E_4+E_5)\text{ if }v\in[0,1],\\
-K_S-vE-\frac{v}{2} (E_1+2E_2+3E_3+2E_4+E_5)-(v-1)C\text{ if }v\in[1,2].
\end{cases} \\
&&\text{} N(v)=\begin{cases}
\frac{v}{2} (E_1+2E_2+3E_3+2E_4+E_5)\text{ if }v\in[0,1],\\
\frac{v}{2} (E_1+2E_2+3E_3+2E_4+E_5)+(v-1)C\text{ if }v\in[1,2].
\end{cases}
\end{align*}}
Moreover, 
$$(P(v))^2=\begin{cases}
1-\frac{v^2}{2}  \text{ if }v\in[0,1],\\
\frac{(2-v)^2}{2}  \text{ if }v\in[1,2].
\end{cases}
P(v)\cdot E=\begin{cases}
\frac{v}{2}  \text{ if }v\in[0,1],\\
1-\frac{v}{2}  \text{ if }v\in[1,2].
\end{cases}$$
Now we apply the computation from Section \ref{dP1-D4} (Step 1.) and get that $\delta_P(S)=1$ for $P\in E\backslash E_3$.
\\Thus, $\delta_{\mathcal{P}} (X)=\frac{3}{5}$.
 \end{proof}

\subsubsection{$\mathbb{E}_7$ singularity on Du Val Del Pezzo surfaces of degree $1$}\label{dP1-E7}
 \begin{lemma} Let $X$ be a singular del Pezzo surface of degree $1$ with an $\mathbb{E}_7$ singularity at point $\mathcal{P}$.  Let $\mathcal{C}$ be a~curve in the~pencil $|-K_X|$ that contains~$\mathcal{P}$.  Then $\delta_{\mathcal{P}} (X)=\frac{3}{7}$.
 \end{lemma}
 \begin{proof}
Let $S$ be the minimal resolution of singularities.  Then $S$ is a weak del Pezzo surface of degree $1$. Suppose $C$ is a strict transform of $\mathcal{C}$ on $S$ and $E$, $E_1$, $E_2$, $E_3$, $E_4$, $E_5$ and $E_6$ are the exceptional divisors with the intersection:
\begin{figure}[h!]
    \centering
 \includegraphics[width=9cm]{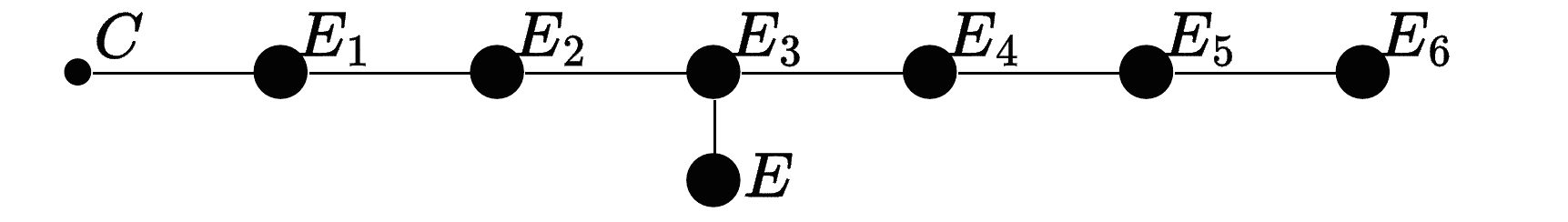}
    \caption{Dual graph: $(-K_S)^2=1$, $\mathbb{E}_7$ singularity}
\end{figure}
\par
We have $-K_S\sim C+2E_1+3E_2+4E_3+3E_4+2E_5+E_6+2E$. Let $P$ be a point on $S$.
\par{\bf Step 1.} Suppose $P\in  E_3$. Then $\tau(E_3)=4$ and the   Zariski decomposition of the divisor $-K_S-vE_3\sim C+2E_1+3E_2+(4-v)E_3+3E_4+2E_5+E_6+2E$ is  the following:
 {\allowdisplaybreaks\begin{align*}
\hspace*{-0.5cm}&& P(v)=\begin{cases}
-K_S-vE_3-\frac{v}{4} (2E+3E_4+2E_5+E_6)-\frac{v}{3} (E_1+2E_2)\text{ if }v\in[0,3],\\
-K_S-vE_3-\frac{v}{4} (2E+3E_4+2E_5+E_6)-(v-1)E_1-(v-2)E_2-(v-3)C\text{ if }v\in[3,4].
\end{cases} \\
\hspace*{-0.5cm}&&N(v)=\begin{cases}
\frac{v}{4} (2E+3E_4+2E_5+E_6)-\frac{v}{3} (E_1+2E_2)\text{ if }v\in[0,3],\\
\frac{v}{4} (2E+3E_4+2E_5+E_6)+(v-1)E_1+(v-2)E_2+(v-3)C\text{ if }v\in[3,4].
\end{cases}
\end{align*}}
Moreover, 
$$(P(v))^2=\begin{cases}
1-\frac{v^2}{12}  \text{ if }v\in[0,3],\\
\frac{(4-v)^2}{4}  \text{ if }v\in[3,4].
\end{cases}
P(v)\cdot E_3=\begin{cases}
\frac{v}{12}  \text{ if }v\in[0,3],\\
1-\frac{v}{4}  \text{ if }v\in[3,4].
\end{cases}$$
We have
$S_{S} (E_3)=\frac{7}{3}$. Thus, $\delta_P(S)\le \frac{3}{7}$ for $P\in E_3$. Moreover, if $P\in E_3\cap (E\cup E_4)$ if $P\in E_3\backslash (E\cup E_4)$   we have 
$$
h(v)\le\begin{cases}
\frac{19v^2}{228} \text{ if }v\in \big [0, 3\big ],\\
\frac{(4 - v )(5v + 4)}{32} \text{ if }v\in [3,4]
\end{cases}
\text{ or }
h(v)\le\begin{cases}
\frac{17v^2}{228} \text{ if }v\in \big [0, 3\big ],\\
\frac{(4 -v)(7v - 4)}{32} \text{ if }v\in [3,4]
\end{cases}
$$
Thus,
$S(W_{\bullet,\bullet}^{E_3};P)\le \frac{11}{6} <\frac{7}{3}$ or
$S(W_{\bullet,\bullet}^{E_3};P)\le \frac{5}{3} <\frac{7}{3}$.
We get $\delta_P(S)=\frac{3}{7}$ for $P\in E_3$.
 \par{\bf Step 2.} Suppose $P\in  E_2$. Then $\tau(E_2)=3$ and the   Zariski decomposition of the divisor $-K_S-vE_2\sim C+2E_1+(3-v)E_2+4E_3+3E_4+2E_5+E_6+2E$ is  the following:
 {\allowdisplaybreaks\begin{align*}
\hspace*{-0.5cm}&& P(v)=\begin{cases}
-K_S-vE_2-\frac{v}{3} (2E+4E_3+3E_4+2E_5+E_6)-\frac{v}{2}E_1\text{ if }v\in[0,2],\\
-K_S-vE_2-\frac{v}{3} (2E+4E_3+3E_4+2E_5+E_6)-(v-1)E_1-(v-2)C\text{ if }v\in[2,3].
\end{cases} \\
\hspace*{-0.5cm}&&\text{}N(v)=\begin{cases}
\frac{v}{3} (2E+4E_3+3E_4+2E_5+E_6)+\frac{v}{2}E_1\text{ if }v\in[0,2],\\
\frac{v}{3} (2E+4E_3+3E_4+2E_5+E_6)+(v-1)E_1+(v-2)C\text{ if }v\in[2,3].
\end{cases}
\end{align*}}
Moreover, 
$$(P(v))^2=\begin{cases}
1-\frac{v^2}{6}  \text{ if }v\in[0,2],\\
\frac{(3-v)^2}{3}  \text{ if }v\in[2,3].
\end{cases}
P(v)\cdot E_2=\begin{cases}
\frac{v}{6}  \text{ if }v\in[0,2],\\
1-\frac{v}{6}  \text{ if }v\in[2,3].
\end{cases}$$
Now we apply the computation from Section \ref{dP1-D8} (Step 1.) and get that $\delta_P(S)=\frac{3}{5}$ for $P\in E_2\backslash E_3$.
\par{\bf Step 3.} Suppose $P\in  E_1$. Then $\tau(E_1)=2$ and the  Zariski decomposition of the divisor $-K_S-vE_1\sim C+(2-v)E_1+3E_2+4E_3+3E_4+2E_5+E_6+2E$ is:
 {\allowdisplaybreaks\begin{align*}
&& P(v)=\begin{cases}
-K_S-vE_1-\frac{v}{2} (2E+3E_2+4E_3+3E_4+2E_5+E_6)\text{ if }v\in[0,1],\\
-K_S-vE_1-\frac{v}{2} (2E+3E_2+4E_3+3E_4+2E_5+E_6)-(v-1)C\text{ if }v\in[1,2].
\end{cases} \\
&&\text{} N(v)=\begin{cases}
\frac{v}{2} (2E+3E_2+4E_3+3E_4+2E_5+E_6)\text{ if }v\in[0,1],\\
\frac{v}{2} (2E+3E_2+4E_3+3E_4+2E_5+E_6)+(v-1)C\text{ if }v\in[1,2].
\end{cases}
\end{align*}}
Moreover, 
$$(P(v))^2=\begin{cases}
1-\frac{v^2}{2}  \text{ if }v\in[0,1],\\
\frac{(2-v)^2}{2}  \text{ if }v\in[1,2].
\end{cases}
P(v)\cdot E_1=\begin{cases}
\frac{v}{2}  \text{ if }v\in[0,1],\\
1-\frac{v}{2}  \text{ if }v\in[1,2].
\end{cases}$$
Now we apply the computation from Section \ref{dP1-D4} (Step 1.) and get that $\delta_P(S)=1$ for $P\in E_1\backslash E_2$.
\par{\bf Step 4.} Suppose $P\in  E$.  Then $\tau(E)=2$ and the  Zariski decomposition of the divisor $-K_S-vE\sim C+2E_1+3E_2+4E_3+3E_4+2E_5+E_6+(2-v)E$ is:
{   {\allowdisplaybreaks\begin{align*}
\hspace*{-1cm}&& P(v)=\begin{cases}
-K_S-vE-\frac{v}{7} (4E_1+8E_2+12E_3+9E_4+6E_5+3E_6) \text{ if }v\in\big[0,\frac{7}{4}\big],\\
-K_S-vE-(4v - 7)C-(4v-6)E_1-(4v - 5)E_2-(v-1)(4E_3+3E_4+2E_5+E_6)
\text{ if }v\in\big[\frac{7}{4},2\big].
\end{cases} \\
\hspace*{-1cm}&& N(v)=\begin{cases}
\frac{v}{7} (4E_1+8E_2+12E_3+9E_4+6E_5+3E_6)\text{ if }v\in\big[0,\frac{7}{4}\big],\\
(4v - 7)C+(4v-6)E_1+(4v - 5)E_2+(v-1)(4E_3+3E_4+2E_5+E_6)\text{ if }v\in\big[\frac{7}{4},2\big].
\end{cases}
\end{align*}}}
Moreover, 
$$(P(v))^2=\begin{cases}
1-\frac{2v^2}{7}  \text{ if }v\in\big [0,\frac{7}{4}\big ],\\
2(2-v)^2  \text{ if }v\in\big[\frac{7}{4},2\big].
\end{cases}
P(v)\cdot E=\begin{cases}
\frac{2v}{7}  \text{ if }v\in\big [0,\frac{7}{4}\big ],\\
2(2-v)  \text{ if }v\in\big[\frac{7}{4},2\big].
\end{cases}$$
We have
$S_{S} (E)=\frac{5}{4}$. Thus, $\delta_P(S)\le \frac{4}{5}$ for $P\in E$. Moreover,  if $P\in E\backslash E_3$   we have 
$$h(v)\le\begin{cases}
\frac{2v^2}{49} \text{ if }v\in \big [0, \frac{7}{4}\big ],\\
2(v - 2)^2\text{ if }v\in\big[\frac{7}{4},2\big].
\end{cases}
$$
Thus
$S(W_{\bullet,\bullet}^{E_2};P)\le  \frac{1}{6}< \frac{5}{4}$.
We get $\delta_P(S)=\frac{4}{5}$ for $P\in E\backslash E_3$.
\par{\bf Step 5.} Suppose $P\in  E_4$. Then $\tau(E_4)=3$ and the  Zariski decomposition of the divisor $-K_S-vE_4\sim C+2E_1+3E_2+4E_3+(3-v)E_4+2E_5+E_6+2E$ is:
{\small   {\allowdisplaybreaks\begin{align*}
\hspace*{-0.7cm}&&P(v)=\begin{cases}
-K_S-vE_4-\frac{v}{5} (2E_1+4E_2+6E_3+3E)-\frac{v}{3} (2E_5+E_6)\text{ if }v\in\big[0,\frac{5}{2}\big],\\
-K_S-vE_4-(2v - 5)C-(2v - 4)E_1-(2v - 3)E_2-(2v - 2)E_3-(v-1)E-\frac{v}{3} (2E_5+E_6)
\text{ if }v\in\big [\frac{5}{2}, 3\big ].
\end{cases} \\
\hspace*{-0.7cm}&& N(v)=\begin{cases}
\frac{v}{5} (2E_1+4E_2+6E_3+3E)-\frac{v}{3} (2E_5+E_6)\text{ if }v\in\big[0,\frac{5}{2}\big],\\
(2v - 5)C+(2v - 4)E_1+(2v - 3)E_2+(2v - 2)E_3+(v-1)E+\frac{v}{3} (2E_5+E_6)\text{ if }v\in\big [\frac{5}{2}, 3\big ].
\end{cases}
\end{align*}}}
Moreover, 
$$(P(v))^2=\begin{cases}
1-\frac{2v^2}{15}  \text{ if }v\in\big [0,\frac{5}{2}\big ],\\
\frac{2(3-v)^2}{3}  \text{ if }v\in\big [\frac{5}{2}, 3\big ].
\end{cases}
P(v)\cdot E_4=\begin{cases}
\frac{2v}{15}  \text{ if }v\in\big [0,\frac{5}{2}\big ],\\
2(1-\frac{v}{3})  \text{ if }v\in\big [\frac{5}{2}, 3\big ].
\end{cases}$$
We have
$S_{S} (E_4)=\frac{11}{6}$. Thus, $\delta_P(S)\le \frac{6}{11}$ for $P\in E_4$. Moreover,  if $P\in E_4\backslash E_3$   we have 
$$h(v)\le\begin{cases}
\frac{22v^2}{225} \text{ if }v\in \big [0, \frac{5}{2}\big ],\\
\frac{2(3-v)(v + 3)}{9}\text{ if }v\in\big [\frac{5}{2}, 3\big ].
\end{cases}
$$
Thus,
$S(W_{\bullet,\bullet}^{E_4};P)\le \frac{4}{3}< \frac{11}{6}$.
We get $\delta_P(S)=\frac{6}{11}$ for $P\in  E_4\backslash E_3$.
\par{\bf Step 6.} Suppose $P\in E_5$. Then $\tau(E_5)=2$ and the  Zariski decomposition of the divisor $-K_S-vE_5\sim C+2E_1+3E_2+4E_3+3E_4+(2-v)E_5+E_6+2E$ is given by:
 {\allowdisplaybreaks\begin{align*}
&&P(v)=
-K_S-vE_5-\frac{v}{4} (2E_1+4E_2+6E_3+5E_4+3E+2E_6)\text{ if }v\in[0,2].\\
&&
N(v)=
\frac{v}{4} (2E_1+4E_2+6E_3+5E_4+3E+2E_6)\text{ if }v\in[0,2].
\end{align*}}
Moreover, 
$$(P(v))^2=\frac{(2-v)(2+v)}{4}
\text{ and }P(v)\cdot E_5=\frac{v}{4}\text{ if }v\in[0,2].$$
Now we apply the computation from Section \ref{dP1-D6} (Step 1.) and get that $\delta_P(S)=\frac{3}{4}$ for $P\in E_5\backslash E_4$.
\par{\bf Step 7.} Suppose $P\in  E_6$.  There exist $(-1)$-curves and $(-2)$-curves  which form one of the following dual graphs:
\begin{figure}[h!]
    \centering
\includegraphics[width=15cm]{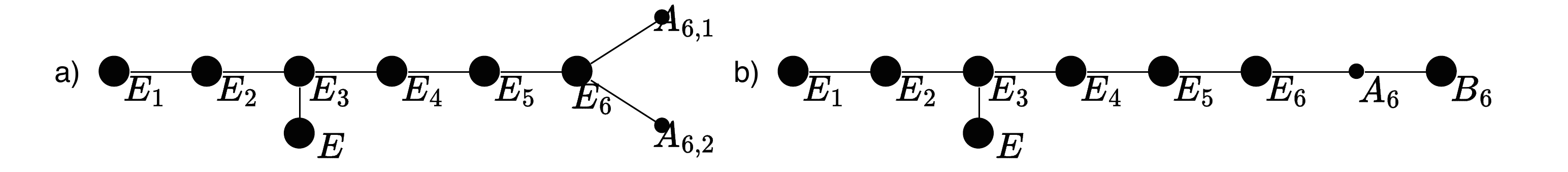}
    \caption{Dual graph: $(-K_S)^2=1$, $\mathbb{E}_7$ singularity, $\delta_P(S)=\frac{6}{5}$}
\end{figure}
\par
Then $\tau(E_6)=\frac{3}{2}$ and the  Zariski Decomposition of the divisor $-K_S-vE_6$ is:
   {
 {\allowdisplaybreaks\begin{align*}
\hspace*{-0.7cm}&{\text{\bf a). }} & P(v)=\begin{cases}
-K_S-vE_6-\frac{v}{3} (2E_1+4E_2+6E_3+5E_4+4E_5+3E)\text{ if }v\in[0,1],\\
-K_S-vE_6-\frac{v}{3} (2E_1+4E_2+6E_3+5E_4+4E_5+3E)-(v-1)(A_{6,1}+A_{6,2})\text{ if }v\in\big[1,\frac{3}{2}\big].
\end{cases} \\
\hspace*{-0.7cm}&&N(v)=\begin{cases}
\frac{v}{3} (2E_1+4E_2+6E_3+5E_4+4E_5+3E)\text{ if }v\in[0,1],\\
\frac{v}{3} (2E_1+4E_2+6E_3+5E_4+4E_5+3E)+(v-1)(A_{1,1}+A_{1,2})\text{ if }v\in\big[1,\frac{3}{2}\big].
\end{cases}\\
\hspace*{-0.7cm}&{\text{\bf b). }} & P(v)=\begin{cases}
-K_S-vE_6-\frac{v}{3} (2E_1+4E_2+6E_3+5E_4+4E_5+3E)\text{ if }v\in[0,1],\\
-K_S-vE_6-\frac{v}{3} (2E_1+4E_2+6E_3+5E_4+4E_5+3E)-(v-1)(2A_{6}+B_6)\text{ if }v\in\big[1,\frac{3}{2}\big].
\end{cases} \\
\hspace*{-0.7cm}&&N(v)=\begin{cases}
\frac{v}{3} (2E_1+4E_2+6E_3+5E_4+4E_5+3E)\text{ if }v\in[0,1],\\
\frac{v}{3} (2E_1+4E_2+6E_3+5E_4+4E_5+3E)+(v-1)(2A_{6}+B_6)\text{ if }v\in\big[1,\frac{3}{2}\big].
\end{cases}
\end{align*}}}
The Zariski Decomposition in part a). follows from 
 $$-K_S-vE_6\sim_{\DR} \Big(\frac{3}{2}-v\Big)E_6+\frac{1}{2}\Big(2E_1+4E_2+6E_3+5E_4+4E_5+3E+A_{6,1}+A_{6,2}\Big).$$
 A similar statement holds in other parts.
Moreover, 
$$(P(v))^2=\begin{cases}
1-\frac{2v^2}{3}  \text{ if }v\in[0,1],\\
\frac{(3-2v)^2}{3}  \text{ if }v\in\big [1, \frac{3}{2}\big ].
\end{cases}
P(v)\cdot E_1=\begin{cases}
\frac{2v}{3}  \text{ if }v\in[0,1],\\
2(1-\frac{2v}{3})  \text{ if }v\in\big [1, \frac{3}{2}\big ].
\end{cases}$$
Now we apply the computation from Section \ref{dP1-D6} (Step 2.) and get that $\delta_P(S)=\frac{6}{5}$ for $P\in E_6\backslash E_5$.
\\Thus, $\delta_{\mathcal{P}} (X)=\frac{3}{7}$.
 \end{proof}
\subsubsection{$\mathbb{E}_8$ singularity on Du Val Del Pezzo surfaces of degree $1$}\label{dP1-E_8}
\begin{lemma} Let $X$ be a singular del Pezzo surface of degree $1$ with an $\mathbb{E}_8$ singularity at point $\mathcal{P}$.  Let $\mathcal{C}$ be a~curve in the~pencil $|-K_X|$ that contains~$\mathcal{P}$. Then $\delta_{\mathcal{P}} (X)=\frac{3}{11}$.
\end{lemma}
 \begin{proof}
Let $S$ be the minimal resolution of singularities.  Then $S$ is a weak del Pezzo surface of degree $1$. Suppose $C$ is a strict transform of $\mathcal{C}$ on $S$ and $E$, $E_1$, $E_2$, $E_3$, $E_4$, $E_5$, $E_6$ and $E_7$ are the exceptional divisors with the intersection:
\begin{figure}[h!]
    \centering
\includegraphics[width=8cm]{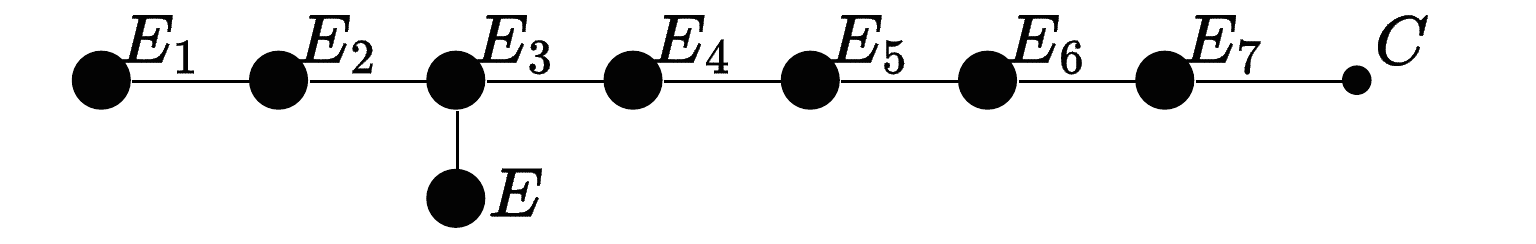}
    \caption{Dual graph: $(-K_S)^2=1$, $\mathbb{E}_8$ singularity}
\end{figure}
\par
We have $-K_S\sim C+2E_1+4E_2+6E_3+5E_4+4E_5+3E_6+2E_7+3E$.\\
 {\bf Step 1.} Suppose $P\in  E_3$. Then $\tau(E_3)=6$ and the   Zariski decomposition of the divisor $-K_S-vE_3\sim C+2E_1+4E_2+(6-v)E_3+5E_4+4E_5+3E_6+2E_7+3E$ is  the following:
{\small   {\allowdisplaybreaks\begin{align*}
\hspace*{-0.8cm}&& P(v)=\begin{cases}
-K_S-vE_3-\frac{v}{2}E-\frac{v}{3}(E_1+2E_2)-\frac{v}{5}(4E_4+3E_5+2E_6+E_7)\text{ if }v\in\big[0,5\big],\\
-K_S-vE_3-\frac{v}{2}E-\frac{v}{3}(E_1+2E_2)-(v-1)E_4-(v-2)E_5-(v-3)E_6-(v-4)E_7-(v-5)C
\text{ if }v\in[5,6].
\end{cases} \\
\hspace*{-0.8cm}&&\text{}N(v)=\begin{cases}
\frac{v}{2}E+\frac{v}{3}(E_1+2E_2)+\frac{v}{5}(4E_4+3E_5+2E_6+E_7)\text{ if }v\in\big[0,5\big],\\
\frac{v}{2}E+\frac{v}{3}(E_1+2E_2)+(v-1)E_4+(v-2)E_5+(v-3)E_6+(v-4)E_7+(v-5)C\text{ if }v\in[5,6].
\end{cases}
\end{align*}}}
Moreover, 
$$(P(v))^2=\begin{cases}
1-\frac{v^2}{30}  \text{ if }v\in[0,5],\\
\frac{(6-v)^2}{6}  \text{ if }v\in[5,6].
\end{cases}
P(v)\cdot E_3=\begin{cases}
\frac{v}{30}  \text{ if }v\in[0,5],\\
1-\frac{v}{6}  \text{ if }v\in[5,6].
\end{cases}$$
We have $S_{S} (E_3)=\frac{11}{3}$.
Thus, $\delta_P(S)\le \frac{3}{11}$ for $P\in E_3$. Moreover, if $P\in E_3\cap (E\cup E_2)$ if $P\in E_3\backslash (E\cup E_2)$   we have 
$$
h(v)\le\begin{cases}
\frac{41v^2}{1800} \text{ if }v\in [0,5],\\
\frac{(6 - v )(7v + 6)}{72} \text{ if }v\in[5,6].
\end{cases}
\text{ or }
h(v)\le\begin{cases}
\frac{49v^2}{1800} \text{ if }v\in [0,5],\\
\frac{(6 - v )(11v - 6)}{72} \text{ if }v\in[5,6].
\end{cases}
$$
Thus,
$S(W_{\bullet,\bullet}^{E_3};P)\le \frac{5}{2} <\frac{11}{3}$ or
$S(W_{\bullet,\bullet}^{E_3};P)\le 3 <\frac{11}{3}$.
We get $\delta_P(S)=\frac{3}{11}$ for $P\in E_3$.
\par{\bf Step 2.} Suppose $P\in  E_2$. Then $\tau(E_2)=4$ and the  Zariski decomposition of the divisor $-K_S-vE_2\sim C+2E_1+(4-v)E_2+6E_3+5E_4+4E_5+3E_6+2E_7+3E$ is:
{\footnotesize  {\allowdisplaybreaks\begin{align*}
\hspace*{-1.7cm}&&P(v)=\begin{cases}
-K_S-vE_2-\frac{v}{2}E_1-\frac{v}{7}(5E+10E_3+8E_4+6E_5+4E_6+2E_7)\text{ if }v\in\big[0,\frac{7}{2}\big],\\
-K_S-vE_2-\frac{v}{2}E_1-(v-1)E-(2v - 2)E_3-(2v - 3)E_4-(2v - 4)E_5-(2v - 5)E_6-(2v - 6)E_7-(2v - 7)C
\text{ if }v\in\big[\frac{7}{2},4\big].
\end{cases} 
\\
\hspace*{-1.7cm}&& N(v)=\begin{cases}
\frac{v}{2}E_1+\frac{v}{7}(5E+10E_3+8E_4+6E_5+4E_6+2E_7)\text{ if }v\in\big[0,\frac{7}{2}\big],\\
\frac{v}{2}E_1+(v-1)E+(2v - 2)E_3+(2v - 3)E_4+(2v - 4)E_5+(2v - 5)E_6+(2v - 6)E_7+(2v - 7)C\text{ if }v\in\big[\frac{7}{2},4\big].
\end{cases}
\end{align*}}}
Moreover, 
$$(P(v))^2=\begin{cases}
1-\frac{v^2}{14}  \text{ if }v\in\big [0,\frac{7}{2}\big ],\\
\frac{(4-v)^2}{2}  \text{ if }v\in\big [\frac{7}{2}, 4\big ].
\end{cases}
P(v)\cdot E_2=\begin{cases}
\frac{v}{14}  \text{ if }v\in\big [0,\frac{7}{2}\big ],\\
2-\frac{v}{2}  \text{ if }v\in\big [\frac{7}{2}, 4\big ].
\end{cases}$$
We have
$S_{S} (E_2)=\frac{5}{2}$.
Thus, $\delta_P(S)\le \frac{2}{5}$ for $P\in E_2$. Moreover,  if $P\in E_2\backslash E_3$   we have 
$$h(v)\le\begin{cases}
\frac{15v^2}{392} \text{ if }v\in \big [0, \frac{7}{2}\big ],\\
\frac{(4-v)(4+v)}{8}\text{ if }v\in\big [\frac{7}{2}, 4\big ].
\end{cases}
$$
Thus,
$S(W_{\bullet,\bullet}^{E_2};P)\le \frac{4}{3}< \frac{5}{2}$.
We get $\delta_P(S)=\frac{2}{5}$ for $P\in  E_2\backslash E_3$.
\par{\bf Step 3.} Suppose $P\in E_1$. Then $\tau(E_1)=2$ and the  Zariski decomposition of the divisor $-K_S-vE_1\sim C+(2-v)E_1+4E_2+6E_3+5E_4+4E_5+3E_6+2E_7+3E$ is given by:
 {\allowdisplaybreaks\begin{align*}
&&P(v)=
-K_S-vE_1-\frac{v}{4} (5E+7E_2+10E_3+8E_4+6E_5+4E_6+2E_7)\text{ if }v\in[0,2].\\
&&
N(v)=
\frac{v}{4} (5E+7E_2+10E_3+8E_4+6E_5+4E_6+2E_7)\text{ if }v\in[0,2].
\end{align*}}
Moreover, 
$$(P(v))^2=\frac{(2-v)(2+v)}{4}
\text{ and }P(v)\cdot E_1=\frac{v}{4}\text{ if }v\in[0,2].$$
Now we apply the computation from Section \ref{dP1-D6} (Step 1.) and get that $\delta_P(S)=\frac{3}{4}$ for $P\in E_1\backslash E_2$.
\par{\bf Step 4.} Suppose $P\in  E$. Then $\tau(E)=3$ and the  Zariski decomposition of the divisor $-K_S-vE\sim C+2E_1+4E_2+6E_3+5E_4+4E_5+3E_6+2E_7+(3-v)E$ is:
{\footnotesize  {\allowdisplaybreaks\begin{align*}
\hspace*{-1.5cm}&&P(v)=\begin{cases}
-K_S-vE-\frac{v}{8}(5E_1+10E_2+15E_3+12E_4+9E_5+6E_6+3E_7)\text{ if }v\in\big[0,\frac{8}{3}\big],\\
-K_S-vE-(v-1)(E_1+2E_2+3E_3)-(3v-4)E_4-(3v-5)E_5-(3v-6)E_6-(3v-7)E_7-(3v-8)C
\text{ if }v\in\big[\frac{8}{3},3\big].
\end{cases}\\
\hspace*{-1.5cm}&& N(v)=\begin{cases}
\frac{v}{8}(5E_1+10E_2+15E_3+12E_4+9E_5+6E_6+3E_7)\text{ if }v\in\big[0,\frac{8}{3}\big],\\
(v-1)(E_1+2E_2+3E_3)+(3v-4)E_4+(3v-5)E_5+(3v-6)E_6+(3v-7)E_7+(3v-8)C\text{ if }v\in\big[\frac{8}{3},3\big].
\end{cases}
\end{align*}}}
Moreover, 
$$(P(v))^2=\begin{cases}
1-\frac{v^2}{8}  \text{ if }v\in\big [0,\frac{8}{3}\big ],\\
(3-v)^2  \text{ if }v\in\big[\frac{8}{3},3\big].
\end{cases}
P(v)\cdot E=\begin{cases}
\frac{v}{8}  \text{ if }v\in\big [0,\frac{8}{3}\big ],\\
3-v  \text{ if }v\in\big[\frac{8}{3},3\big].
\end{cases}$$
We have
$S_{S} (E)=\frac{17}{9}$. Thus, $\delta_P(S)\le \frac{9}{17}$ for $P\in E$. Moreover,  if $P\in E\backslash E_3$   we have 
$$h(v)\le\begin{cases}
\frac{v^2}{128} \text{ if }v\in \big [0, \frac{8}{3}\big ],\\
\frac{(3-v)^2}{2}\text{ if }v\in \text{ if }v\in\big[\frac{8}{3},3\big].
\end{cases}
$$
Thus,
$S(W_{\bullet,\bullet}^{E};P)\le  \frac{1}{9}< \frac{17}{9}$.
We get $\delta_P(S)=\frac{9}{17}$ for $P\in  E\backslash E_3$.
 \par{\bf Step 5.} Suppose $P\in  E_4$. Then $\tau(E_4)=5$ and the   Zariski decomposition of the divisor $-K_S-vE_4\sim C+2E_1+4E_2+6E_3+(5-v)E_4+4E_5+3E_6+2E_7+3E$ is  the following:
{ \small  {\allowdisplaybreaks\begin{align*}
\hspace*{-0.5cm}&& P(v)=\begin{cases}
-K_S-vE_4-\frac{v}{5}(2E_1+4E_2+6E_3+3E)-\frac{v}{4}(3E_5+2E_6+E_7)\text{ if }v\in[0,4],\\
-K_S-vE_4-\frac{v}{5}(2E_1+4E_2+6E_3+3E)-(v-1)E_5-(v-2)E_6-(v-3)E_7-(v-4)C
\text{ if }v\in[4,5].
\end{cases} \\
\hspace*{-0.5cm}&&\text{}N(v)=\begin{cases}
\frac{v}{5}(2E_1+4E_2+6E_3+3E)+\frac{v}{4}(3E_5+2E_6+E_7)\text{ if }v\in[0,4],\\
\frac{v}{5}(2E_1+4E_2+6E_3+3E)+(v-1)E_5+(v-2)E_6+(v-3)E_7+(v-4)C\text{ if }v\in[4,5].
\end{cases}
\end{align*}}}
Moreover, 
$$(P(v))^2=\begin{cases}
1-\frac{v^2}{20}  \text{ if }v\in[0,4],\\
\frac{(5-v)^2}{5}  \text{ if }v\in[4,5].
\end{cases}
P(v)\cdot E_4=\begin{cases}
\frac{v}{20}  \text{ if }v\in[0,4],\\
1-\frac{v}{5}  \text{ if }v\in[4,5].
\end{cases}$$
We have
$S_{S} (E_4)= 3$.
Thus, $\delta_P(S)\le \frac{1}{3}$ for $P\in E_4$. Moreover, if $P\in E_4\backslash E_3$   we have 
$$
h(v)\le\begin{cases}
\frac{31v^2}{800} \text{ if }v\in [0,4],\\
\frac{(5 - v )(9v - 5)}{50} \text{ if }v\in[4,5].
\end{cases}
$$
Thus,
$S(W_{\bullet,\bullet}^{E_4};P)\le \frac{7}{3} <3$.
We get $\delta_P(S)=3$ for $P\in E_4\backslash E_3$.
\par{\bf Step 6.} Suppose $P\in  E_5$. Then $\tau(E_5)=4$ and the   Zariski decomposition of the divisor $-K_S-vE_5\sim C+2E_1+4E_2+6E_3+5E_4+(4-v)E_5+3E_6+2E_7+3E$ is  the following:
{ {\allowdisplaybreaks\begin{align*}
\hspace*{-0.5cm}&& P(v)=\begin{cases}
-K_S-vE_5-\frac{v}{4} (2E_1+4E_2+6E_3+5E_4+3E)-\frac{v}{3} (2E_6+E_7)\text{ if }v\in[0,3],\\
-K_S-vE_5-\frac{v}{4} (2E_1+4E_2+6E_3+5E_4+3E)-(v-1)E_6-(v-2)E_7-(v-3)C\text{ if }v\in[3,4].
\end{cases} \\
\hspace*{-0.8cm}&&N(v)=\begin{cases}
\frac{v}{4} (2E_1+4E_2+6E_3+5E_4+3E)-\frac{v}{3} (2E_6+E_7)\text{ if }v\in[0,3],\\
\frac{v}{4} (2E_1+4E_2+6E_3+5E_4+3E)+(v-1)E_6+(v-2)E_7+(v-3)C\text{ if }v\in[3,4].
\end{cases}
\end{align*}}}
Moreover, 
$$(P(v))^2=\begin{cases}
1-\frac{v^2}{12}  \text{ if }v\in[0,3],\\
\frac{(4-v)^2}{4}  \text{ if }v\in[3,4].
\end{cases}
P(v)\cdot E_5=\begin{cases}
\frac{v}{12}  \text{ if }v\in[0,3],\\
1-\frac{v}{4}  \text{ if }v\in[3,4].
\end{cases}$$
Now we apply the computation from Section \ref{dP1-E7} (Step 1.) and get that $\delta_P(S)=\frac{3}{7}$ for $P\in E_5\backslash E_4$.
 \par{\bf Step 7.} Suppose $P\in  E_6$. Then $\tau(E_6)=3$ and the   Zariski decomposition of the divisor $-K_S-vE_6\sim C+2E_1+4E_2+6E_3+5E_4+4E_5+(3-v)E_6+2E_7+3E$ is  the following:
 {\allowdisplaybreaks\begin{align*}
\hspace*{-0.2cm}&& P(v)=\begin{cases}
-K_S-vE_6-\frac{v}{3} (2E_1+4E_2+6E_3+5E_4+4E_5+3E)-\frac{v}{2}E_7\text{ if }v\in[0,2],\\
-K_S-vE_6-\frac{v}{3} (2E_1+4E_2+6E_3+5E_4+4E_5+3E)-(v-1)E_7-(v-2)C\text{ if }v\in[2,3].
\end{cases} \\
\hspace*{-0.2cm}&&\text{}N(v)=\begin{cases}
\frac{v}{3} (2E_1+4E_2+6E_3+5E_4+4E_5+3E)+\frac{v}{2}E_7\text{ if }v\in[0,2],\\
\frac{v}{3} (2E_1+4E_2+6E_3+5E_4+4E_5+3E)+(v-1)E_7+(v-2)C\text{ if }v\in[2,3].
\end{cases}
\end{align*}}
Moreover, 
$$(P(v))^2=\begin{cases}
1-\frac{v^2}{6}  \text{ if }v\in[0,2],\\
\frac{(3-v)^2}{3}  \text{ if }v\in[2,3].
\end{cases}
P(v)\cdot E_6=\begin{cases}
\frac{v}{6}  \text{ if }v\in[0,2],\\
1-\frac{v}{6}  \text{ if }v\in[2,3].
\end{cases}$$
Now we apply the computation from Section \ref{dP1-D8} (Step 1.) and get that $\delta_P(S)=\frac{3}{5}$ for $P\in E_6\backslash E_5$.
\par{\bf Step 8.} Suppose $P\in  E_7$.  Then $\tau(E_7)=2$ and the  Zariski decomposition of the divisor $-K_S-vE_7\sim C+2E_1+4E_2+6E_3+5E_4+4E_5+3E_6+(2-v)E_7+3E$ is:
 {\allowdisplaybreaks\begin{align*}
&& P(v)=\begin{cases}
-K_S-vE_7-\frac{v}{2} (2E_1+4E_2+6E_3+5E_4+4E_5+3E_6+3E)\text{ if }v\in[0,1],\\
-K_S-vE_7-\frac{v}{2} (2E_1+4E_2+6E_3+5E_4+4E_5+3E_6+3E)-(v-1)C\text{ if }v\in[1,2].
\end{cases} \\
&& N(v)=\begin{cases}
\frac{v}{2} (2E_1+4E_2+6E_3+5E_4+4E_5+3E_6+3E)\text{ if }v\in[0,1],\\
\frac{v}{2} (2E_1+4E_2+6E_3+5E_4+4E_5+3E_6+3E)+(v-1)C\text{ if }v\in[1,2].
\end{cases}
\end{align*}}
Moreover, 
$$(P(v))^2=\begin{cases}
1-\frac{v^2}{2}  \text{ if }v\in[0,1],\\
\frac{(2-v)^2}{2}  \text{ if }v\in[1,2].
\end{cases}
P(v)\cdot E_7=\begin{cases}
\frac{v}{2}  \text{ if }v\in[0,1],\\
1-\frac{v}{2}  \text{ if }v\in[1,2].
\end{cases}$$
Now we apply the computation from Section \ref{dP1-D4} (Step 1.) and get that $\delta_P(S)=1$ for $P\in E_7\backslash E_6$.
\\Thus, $\delta_{\mathcal{P}} (X)=\frac{3}{11}$.
 \printbibliography
\end{proof}

\end{document}